\def\eqref#1{equation~\ref{#1}}
\def\1{\bm{1}}
\DeclareMathAlphabet{\mathsfit}{\encodingdefault}{\sfdefault}{m}{sl}
\SetMathAlphabet{\mathsfit}{bold}{\encodingdefault}{\sfdefault}{bx}{n}
\newcommand{\E}{\mathbb{E}}
\DeclareMathOperator*{\argmax}{arg\,max}
\DeclareMathOperator*{\argmin}{arg\,min}
\DeclarePairedDelimiter{\abs}{\lvert}{\rvert}
\DeclarePairedDelimiter{\norm}{\lVert}{\rVert}
\newtheorem{definition}{Definition}
\newtheorem{lemma}{Lemma}
\newtheorem{corollary}{Corollary}
\newtheorem{theorem}{Theorem}
\newtheorem{fact*}{Fact}
\newtheorem{claim}{Claim}
\renewcommand{\eqref}[1]{Equation \ref{#1}}
\newcommand{\derivativeclosed}[1]{\frac{\mathrm{d}[#1]}{\mathrm{d}t}}
\newcommand{\bx}{\mathbf{x}}
\newcommand{\by}{\mathbf{y}}
\newcommand{\bz}{\mathbf{z}}
\newcommand{\bF}{\mathbf{F}}
\newcommand{\bG}{\mathbf{G}}
\newcommand{\bphi}{\pmb{\phi}}
\newcommand{\btheta}{\pmb{\theta}}
\newcommand{\bp}{\mathbf{p}}
\newcommand{\bq}{\mathbf{q}}
\newcommand{\bfun}{\mathbf{f}}
\newcommand{\bzero}{\mathbf{0}}
\newcommand{\saddle}{\textrm{Solution}}
\newcommand{\range}[2]{\operatorname{Im}_{#1}({#2})}
\definecolor{darkgreen}{rgb}{0.0, 0.3, 0.13}
\definecolor{darkred}{rgb}{0.8, 0.3, 0.13}
\newcommand{\note}[1]{{\color{black} #1}}
\newenvironment{reptheorem}[2]
  {\innercustomthm\label{#2}}
  {\endinnercustomthm}
\newenvironment{replemma}[2]
  {\innercustomlem\label{#2}}
  {\endinnercustomlem}
\title{
Solving Min-Max Optimization with Hidden\\ Structure via Gradient Descent Ascent
\author{%
  Lampros Flokas\thanks{Equal contribution} \\
  Department of Computer Science\\
  Columbia University\\
  New York, NY 10025 \\
  \texttt{lamflokas@cs.columbia.edu} \\
\and
  Emmanouil V. Vlatakis-Gkaragkounis\footnotemark[1] \\
  Department of Computer Science\\
  Columbia University\\
  New York, NY 10025 \\
  \texttt{emvlatakis@cs.columbia.edu} \\
\and
  Georgios Piliouras \\
  Engineering Systems and Design \\
  Singapore University of Technology and Design\\
  Singapore \\
  \texttt{georgios@sutd.edu.sg} \\
}
\date{\today}
\usepackage{graphicx}
\usepackage[title]{appendix}

\newcommand{\citep}[1]{\cite{#1}}
\begin{document}

\maketitle
\begin{abstract}
Many recent AI architectures are inspired by zero-sum games, however, the behavior of their dynamics is still not well understood. Inspired by this, we study standard gradient descent ascent (GDA) dynamics in a specific class of non-convex non-concave zero-sum games, that we call hidden zero-sum games. In this class, players control the inputs of smooth but possibly non-linear functions whose outputs are being applied as inputs to a convex-concave game. Unlike general zero-sum games, these games have a well-defined notion of solution; outcomes that implement the von-Neumann equilibrium of the ``hidden" convex-concave game. We prove that if the hidden game is strictly convex-concave then vanilla GDA converges not merely to local Nash, but typically to the von-Neumann solution. If the game lacks strict convexity properties, GDA may fail to converge to any equilibrium, however, by applying standard regularization techniques we can prove convergence to a von-Neumann solution of a slightly perturbed zero-sum game. Our convergence guarantees are non-local, which as far as we know is a first-of-its-kind type of result in non-convex non-concave games. Finally, we discuss connections of our framework with generative adversarial networks.

\end{abstract}

\clearpage
\tableofcontents
\clearpage
\section{Introduction}\label{section:introduction}

%


Traditionally, our understanding of convex-concave games revolves around von Neumann's celebrated minimax theorem, which implies the existence of saddle point solutions with a uniquely defined value. Although many learning algorithms are known to be able to compute such saddle points~\citep{Cesa06}, recently 
there has there has been a fervor of activity 
in proving stronger  results  such as faster regret minimization rates or analysis of the day-to-day behavior~\citep{mertikopoulos2018cycles,daskalakis2018training,BaileyEC18,abernethy2018faster,
wang2018acceleration,daskalakis2018last,abernethy2019last,mertikopoulos2019optimistic,bailey2019fast,gidel2019a,zhang2019convergence,hsieh2019convergence,bailey2020finite,mokhtari2020unified,hsieh2020explore,imperfect-information}.

This interest has been largely triggered by the impressive successes of AI architectures inspired by min-max games such as Generative Adversarial Networks (GANS)~\citep{goodfellow2014generative}, adversarial training~\citep{madry2018towards} and  reinforcement learning self-play in games~\citep{silver2017mastering}. Critically, however, all these applications are based upon \textit{non-convex non-concave games}, our understanding of which is still  nascent.
 Nevertheless, some important early work in the area has focused on identifying new solution concepts that are widely applicable in general min-max games, such as 
  (local/differential) Nash equilibrium~\citep{adolphs2018local,mazumdar2019local},
  local minmax~\citep{daskalakis2018NIPS}, local minimax~\citep{jin2019minmax},
  (local/differential) Stackleberg equilibrium~\citep{fiez20implicit}, 
 local robust point~\citep{2020arXiv200211875Z}. 
 The plethora of solutions concepts is perhaps  suggestive that 
 ``solving" general min-max games unequivocally may be too ambitious a task. 
 Attraction to spurious fixed points~\citep{daskalakis2018NIPS}, cycles~\citep{vlatakis2019poincare},
 robustly chaotic behavior~\citep{cheung2019vortices,2020arXiv200513996K} and computational hardness issues~\citep{daskalakis2020complexity} all suggest that general min-max games might inherently involve messy, unpredictable and complex behavior. 

\vspace{-0.25cm}
\begin{center} 
{\bf Are there rich classes of non-convex non-concave games with an effectively unique game theoretic solution that is selected by standard optimization dynamics (e.g. gradient descent)?}
\end{center}
\vspace{-0.25cm} 
 
{\bf Our class of games.} We will define a general class of min-max optimization problems, 
 where 
 each agent selects its own vectors of parameters which are then processed separately by smooth functions. Each agent  receives their respective payoff after entering the outputs of  the processed decision vectors as inputs to a standard convex-concave game. Formally,  there exist functions $\bF: \mathbb{R}^{N} \to X \subset \mathbb{R}^{n}$ and $\bG: \mathbb{R}^{M} \to Y \subset \mathbb{R}^{m}$ and  a continuous convex-concave function $L: X \times Y \rightarrow \mathbb{R}$, such that the min-max game is 
\begin{equation*} 
\label{eq:min-max}
\min_{\btheta \in \mathbb{R}^{N}}\max_{\bphi \in \mathbb{R}^{M}} L(\bF(\btheta) ,\bG(\bphi)). \tag{ Hidden Convex-Concave  (HCC)}
\end{equation*}
We call this class of min-max problems Hidden Convex-Concave Games. It generalizes the recently defined hidden bilinear games of~\cite{vlatakis2019poincare}.

{\bf Our solution concept.} Out of all the local Nash equilibria of HCC games, there exists a special subclass, the vectors $(\btheta^*,\bphi^*)$ that implement the von Neumann solution of the convex-concave game. This solution has a  strong and intuitive game theoretic justification. Indeed, it is stable even if the agents could  perform arbitrary deviations directly on the output spaces $X, Y$. These parameter combinations $(\btheta^*,\bphi^*)$ ``solve" the ``hidden" convex-concave $L$ and thus we call them \textit{von Neumann solutions}. Naturally, HCCs will typically have numerous local saddle/Nash equilibria/fixed points that do not satisfy this property. Instead, they correspond to stationary points of the $\bF, \bG$ where their output is stuck, e.g., due to an unfortunate initialization. At these points the agents may be receiving payoffs which can be arbitrarily smaller/larger than the game theoretic value of game $L$. 
\note{Fortunately, we show that Gradient Descent Ascent (GDA) strongly favors von Neumann solutions over generic fixed points.}

\textbf{Our results.} \note{In this work, we study the behavior of continuous GDA dynamics for the class of HCC games where each coordinate of $\bF,\bG$ is controlled by disjoint sets of variables. In a nutshell, we show that GDA trajectories stabilize around or converge to the corresponding von Neumann solutions of the hidden game. Despite restricting our attention to a subset of HCC games, our analysis has to overcome unique hurdles not shared by standard convex concave games.

\textit{Challenges of HCC games. }In convex-concave games, deriving the stability of the von Neumann solutions
relies on the Euclidean distance from the equilibrium being a Lyapunov function. 
In contrast, in HCC games where optimization happens in the parameter space of $\btheta,\bphi$, the non-linear nature of $\bF,\bG$ distorts the convex-concave landscape in the output space. Thus, the Euclidean distance will not be in general a Lyapunov function. Moreover, the existence of \textit{any} Lyapunov function for the trajectories in the output space of $\bF,\bG$
does not 
 translate to a well-defined
function in the parameter space
(unless $\bF,\bG$ are trivial, invertible maps).
Worse yet, even if $L$ has a unique solution in the output space, this solution could be implemented by multiple equilibria 
in the parameter space and thus each of them can not be individually globally attracting. Clearly any transfer of stability or
convergence properties from the output to the parameter space needs to be initialization dependent.


\textit{Lyapunov Stability.} 
Our first step is to construct an initialization-dependent Lyapunov function that accounts for the curvature induced by the operators $\bF$ and $\bG$ (\Cref{lemma:lyapunov-function}). Leveraging a potentially infinite number of initialization-dependent Lyapunov functions in \cref{th:hidden_meta_stability}  we prove that under mild assumptions the outputs of $\bF,\bG$ stabilize around the von Neumann solution of $L$. 

\textit{Convergence.}
Mirroring convex concave games, we require strict convexity or concavity of $L$ to provide convergence guarantees 
 to von Neumann solutions (\Cref{th:strict-convergence}). Barring initializations where von Neumann solutions are not reachable due to the limitations imposed by $\bF$ and $\bG$, the set of von Neumann solutions are globally asymptotically stable (\Cref{cor:global-conv}).
Even in non-strict HCC games,  we can add regularization terms to make  $L$ strictly convex concave. Small amounts of regularization allows for convergence without significantly perturbing the von Neumann solution (\Cref{th:regularization:main}) while increasing regularization enables exponentially faster convergence rates (\Cref{th:regularization:rate}). 
}

\textbf{Organization.}
In \Cref{sec:preliminaries} we provide some preliminary notation, the definition of our model and some useful technical lemmas. 
\Cref{sec:results} is devoted  to the presentation of our the main results. 
\Cref{sec:insights} discusses applications of our framework to specific GAN formulations.
\Cref{sec:conclusion} concludes our work with a discussion of future directions and challenges.
 We defer the full proofs of our results as well as further discussion on applications to the Appendix.

\section{Preliminaries}\label{sec:preliminaries}
\subsection{Notation}
Vectors are denoted in boldface $\bx, \by$ unless otherwise indicated are considered as column vectors. We use $\norm{\cdot}$ corresponds to denote the $\ell_2-$norm. For a function $f:\mathbb{R}^d\to \mathbb{R}$ we use $\nabla f$ to denote its gradient. For functions of two vector arguments, $f (\bx,\by) : \mathbb{R}^{d_1}\times \mathbb{R}^{d_2} \to \mathbb{R}$ , we use $\nabla_{\bx} f, \nabla_{\by} f$ to denote its partial gradient. For the time derivative we will use the dot accent abbreviation, i.e.,  $\dot{\bx}=\frac{d}{dt}[\bx(t)]$. A function $f$ will belong to $C^r$ if it is $r$ times continuously differentiable. The term ``sigmoid'' function refers to $\sigma:\mathbb{R}\to\mathbb{R}$ such that $\sigma(x)=(1+e^{-x})^{-1}$.

\subsection{Hidden Convex Concave Games}
We will begin our discussion by defining the notion of convex concave functions as well as strictly convex concave functions. Note that our definition of strictly convex concave functions is a superset of strictly convex strictly concave functions that are usually studied in the literature.
\begin{definition} \label{def:conv-conc}
$L: \mathbb{R}^n \times \mathbb{R}^m \to \mathbb{R}$ is convex concave if for every $\by \in \mathbb{R}^n$ $L(\cdot, \by)$ is convex and for every $\bx \in \mathbb{R}^m$ $L(\bx, \cdot)$ is concave. Function $L$ will be called strictly convex concave if it is convex concave and for every $\bx \times \by \in \mathbb{R}^n \times \mathbb{R}^m$ either $L(\cdot, \by)$ is strictly convex or $L(\bx, \cdot)$ is strictly concave.
\end{definition}
At the center of our definition of HCC games is a 
convex concave utility function $L$. Additionally, each player of the game is equipped with a set of operator functions. The minimization player is equipped with $n$ functions $f_i: \mathbb{R}^{n_i} \to \mathbb{R}$ while the maximization player is equipped with $m$ functions $g_j : \mathbb{R}^{m_j} \to \mathbb{R}$. We will assume in the rest of our discussion that $f_i, g_j, L$ are all $C^2$ functions. The inputs $\btheta_i \in \mathbb{R}^{n_i}$ and $\bphi_j \in \mathbb{R}^{m_j}$ are grouped in two vectors
\begin{equation*}
    \begin{aligned}
        \btheta &= \begin{bmatrix}
               \btheta_{1} &
                \btheta_{2}& 
                \cdots &
                \btheta_{n}
         \end{bmatrix}^\top &\quad 
        \bF(\btheta) &= \begin{bmatrix} 
                f_{1}(\btheta_{1}) &
                f_{2}(\btheta_{2}) &
                \cdots &
                f_{N}(\btheta_{n})
        \end{bmatrix}^\top\\ 
        \bphi &= \begin{bmatrix}
               \bphi_{1} &
               \bphi_{2} &
               \cdots &
               \bphi_{m}
         \end{bmatrix}^\top &\quad
         \bG(\btheta) &= \begin{bmatrix} 
                g_{1}(\bphi_{1}) &
                g_{2}(\bphi_{2}) &
                \cdots 
                g_{M}(\bphi_{m})
        \end{bmatrix}^\top
    \end{aligned}
\end{equation*}
We are ready to define the hidden convex concave game 
\begin{equation*}
(\btheta^*,\bphi^*) = \argmin_{\btheta \in \mathbb{R}^{N}}\argmax_{\bphi \in \mathbb{R}^{M}} L(\bF(\btheta) ,\bG(\bphi)).
\end{equation*}
where $N = \sum_{i=1}^n n_i$ and $M = \sum_{j=1}^m m_j$. Given a convex concave function $L$, all stationary points of $L$ are (global) Nash equilibria of the min-max game. We will call the set of all equilibria of $L$, von Neumann solutions of $L$ and denote them by \saddle($L$). Unfortunately, \saddle($L$) can be empty for games defined over the entire $\mathbb{R}^n \times \mathbb{R}^m$. For games defined over convex compact sets, the existence of at least one solution is guaranteed by von Neumann's minimax theorem. Our definition of HCC games can capture games on restricted domains by choosing appropriately bounded functions $f_i$ and $g_j$. In the following sections, we will just assume that \saddle($L$) is not empty. We note that our results hold for both bounded and unbounded $f_i$ and $g_j$. We are now ready to write down the equations of the GDA dynamics for a HCC game:
\begin{equation}\label{eq:gda}
    \begin{aligned}
        \dot{\btheta}_i &=&-&\nabla_{\btheta_i}L( \bF(\btheta), \bG(\bphi)) =& - \nabla_{\btheta_i} f_i(\btheta_i) \frac{\partial L}{\partial f_i} ( \bF(\btheta), \bG(\bphi)) \\
        \dot{\bphi}_j &=&&\nabla_{\bphi_j}L( \bF(\btheta), \bG(\bphi))  =&\nabla_{\bphi_j} g_j(\bphi_j) \frac{\partial L}{\partial g_j} ( \bF(\btheta), \bG(\bphi))
    \end{aligned}
\end{equation}
\begin{figure}[h!]
\vspace{-0.5cm}
\centering
\begin{tikzpicture}[scale=0.7, every node/.style={scale=0.7}]]

    \draw[green,thick,rounded corners=8] (-.95,0.55) rectangle (6.5,4.3);
    \fill[green,opacity=0.2,thick,rounded corners=8] (-.95,0.55) rectangle (6.5,4.3);
    \foreach\x in {1}
    {
    \def\k{1.35*\x}
    \draw[green,thick,rounded corners=8]  (-0.65,-0.65+\k) rectangle (3.8,0.6+\k);
    \fill[green,opacity=0.2,thick,rounded corners=8]  (-0.65,-0.65+\k) rectangle (3.8,0.6+\k);
    \filldraw[color=green!60, fill=green!5, very thick](0,0+\k) circle (14pt);
    \draw (0,0+\k) node {$\theta_{\x 1}$};
    \filldraw[color=green!60, fill=green!5, very thick](1,0+\k) circle (14pt);
    \draw (1,0+\k) node {$\theta_{\x 2}$};
    \draw (2,0+\k) node {$\cdots$};
    \filldraw[color=green!60, fill=green!5, very thick](3,0+\k) circle (14pt);
    \draw (3,0+\k) node {$\theta_{\x n_{\x}}$};
    \draw (3.6,-0.3+\k) node {\scriptsize $\btheta_{\x}$};
    \draw  (4.45,\k) node {$f_{\x}(\btheta_{\x})$};
    }
    \draw (1.5,2.5) node {$\vdots$};
    \def\x{N}
    \def\k{3.5}
    \draw[green,thick,rounded corners=8]  (-0.65,-0.65+\k) rectangle (3.8,0.6+\k);
    \fill[green,opacity=0.2,thick,rounded corners=8]  (-0.65,-0.65+\k) rectangle (3.8,0.6+\k);
    \filldraw[color=green!60, fill=green!5, very thick](0,0+\k) circle (14pt);
    \draw (0,0+\k) node { $\theta_{\x 1}$};
    \filldraw[color=green!60, fill=green!5, very thick](1,0+\k) circle (14pt);
    \draw (1,0+\k) node { $\theta_{\x 2}$};
    \draw (2,0+\k) node { $\cdots$};
    \filldraw[color=green!60, fill=green!5, very thick](3,0+\k) circle (14pt);
    \draw (3,0+\k) node { $\theta_{\x n_{\x}}$};
    \draw (3.6,-0.3+\k) node {\scriptsize $\btheta_{\x}$};
    \draw (4.45,\k) node { $f_{\x}(\btheta_{\x})$};
    \draw [very thick,decorate,decoration={calligraphic brace,amplitude=10pt},rotate=180] (-5,-4) -- (-5,-1);
    \draw (6,2.5) node {$\bF(\btheta)$};
    
   \def\transferx{0.75}
      \draw[->,thick,orange] (6.5,2.5)--(6.5+\transferx,2.5);
     \draw[->,thick,orange] (11.05,2.5)--(9.5+\transferx,2.5);
    \draw[cyan,thick] (6.5+\transferx,1) rectangle (9.5+\transferx,4);
    \fill[cyan,opacity=0.2,thick] (6.5+\transferx,1) rectangle (9.5+\transferx,4);
    \draw (8+\transferx,2.5) node {$L(\bF(\btheta),\bG(\bphi))$};
     \draw[->,thick,orange] (6.5+\transferx+0.5,1)--(6.5+\transferx+0.5,0)--(2,0)--(2,0.5);
      \draw[->,thick,orange] (9.5+\transferx-0.5,1)--(9.5+\transferx-0.5,0)--(16.5,0)--(16.5,0.5);

    \def\transferx{14.5}
    \draw[red,thick,rounded corners=8] (-3.45+\transferx,0.55) rectangle (4+\transferx,4.3);
    \fill[red,opacity=0.2,thick,rounded corners=8] (-3.45+\transferx,0.55) rectangle (4+\transferx,4.3);
        \foreach\x in {1}
    {
    \def\k{1.35*\x}
    \draw[red,thick,rounded corners=8]  (-0.65+\transferx,-0.65+\k) rectangle (3.8+\transferx,0.6+\k);
    \fill[red,opacity=0.2,thick,rounded corners=8]  (-0.65+\transferx,-0.65+\k) rectangle (3.8+\transferx,0.6+\k);
    \filldraw[color=red!60, fill=red!5, very thick](0+\transferx,0+\k) circle (14pt);
    \draw (0+\transferx,0+\k) node {$\phi_{\x 1}$};
    \filldraw[color=red!60, fill=red!5, very thick](1+\transferx,0+\k) circle (14pt);
    \draw (1+\transferx,0+\k) node {$\phi_{\x 2}$};
    \draw (2+\transferx,0+\k) node {$\cdots$};
    \filldraw[color=red!60, fill=red!5, very thick](3+\transferx,0+\k) circle (14pt);
    \draw (3+\transferx,0+\k) node {$\phi_{\x n_{\x}}$};
    \draw (3.6+\transferx,-0.3+\k) node {\scriptsize $\bphi_{\x}$};
    \draw (-1.5+\transferx,\k) node {$g_{\x}(\bphi_{\x})$};
    }
    \draw (1.5+\transferx,2.5) node {$\vdots$};
    \def\x{M}
    \def\k{3.5}
    \draw[red,thick,rounded corners=8]  (-0.65+\transferx,-0.65+\k) rectangle (3.8+\transferx,0.6+\k);
    \fill[red,opacity=0.2,thick,rounded corners=8]  (-0.65+\transferx,-0.65+\k) rectangle (3.8+\transferx,0.6+\k);
    \filldraw[color=red!60, fill=red!5, very thick](0+\transferx,0+\k) circle (14pt);
    \draw (0+\transferx,0+\k) node { $\phi_{\x 1}$};
    \filldraw[color=red!60, fill=red!5, very thick](1+\transferx,0+\k) circle (14pt);
    \draw (1+\transferx,0+\k) node { $\phi_{\x 2}$};
    \draw (2+\transferx,0+\k) node { $\cdots$};
    \filldraw[color=red!60, fill=red!5, very thick](3+\transferx,0+\k) circle (14pt);
    \draw (3+\transferx,0+\k) node { $\phi_{\x n_{\x}}$};
    \draw (3.6+\transferx,-0.3+\k) node {\scriptsize $\bphi_{\x}$};
    \draw (-1.5+\transferx,\k) node { $g_{\x}(\bphi_{\x})$};
    \draw [very thick,decorate,decoration={calligraphic brace,amplitude=10pt}] (-2+\transferx,1) -- (-2+\transferx,4);
    \draw (-2.9+\transferx,2.5) node {$\bG(\bphi)$};
    
        \draw (2,-0.5)
    node {  $ \dot{\theta}_i =-\nabla_{\theta_i}L( \bF(\btheta), \bG(\bphi))$ };
    \draw  (2+\transferx,-0.5)
    node {  $ \dot{\phi}_j =\nabla_{\phi_j}L( \bF(\btheta), \bG(\bphi))$ };

\end{tikzpicture}
\vspace{-1cm}
\end{figure}

\subsection{Reparametrization}
The following lemma is useful in studying the dynamics of hidden games.
\begin{lemma}\label{lemma:reparametrization}
Let $k: \mathbb{R}^d \to \mathbb{R}$ be a $C^2$ function. Let $h: \mathbb{R} \to \mathbb{R}$ be a $C^1$ function and $\bx(t)$ 
denote the unique solution of the dynamical system $\Sigma_1$. Then the unique solution
 for dynamical system $\Sigma_2$ is $\bz(t) = \bx( \int_{0}^t h(s) \mathrm{d}s )$
\begin{equation}
\begin{Bmatrix}
    \dot{\bx} &=& \nabla k(\bx) \\
    \bx(0) &=& \bx_{\textrm{init}}
\end{Bmatrix}: \Sigma_1\quad
\begin{Bmatrix}
    \dot{\bz} &=& h(t) \nabla k(\bz) \\
    \bz(0) &=& \bx_{\textrm{init}}
\end{Bmatrix} : \Sigma_2
\end{equation}
\end{lemma}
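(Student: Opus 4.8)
The plan is to verify directly, by the chain rule, that the claimed formula solves $\Sigma_2$, and then invoke uniqueness of solutions. Write $H(t) := \int_0^t h(s)\,\mathrm{d}s$, so that the asserted solution is $\bz(t) = \bx(H(t))$.

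First I would pin down well-posedness. Since $k \in C^2$, the vector field $\nabla k$ is $C^1$ and in particular locally Lipschitz, so the Picard--Lindel\"of theorem guarantees that $\Sigma_1$ has a unique maximal solution $\bx(\cdot)$, which is moreover $C^1$ (indeed $C^2$). For $\Sigma_2$, the right-hand side $(t,\bz)\mapsto h(t)\nabla k(\bz)$ is continuous in $t$ (as $h\in C^1\subset C^0$) and locally Lipschitz in $\bz$, uniformly on compact $t$-intervals, so $\Sigma_2$ likewise has a unique maximal solution. Hence it suffices to exhibit \emph{one} solution of $\Sigma_2$, namely the stated one.

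Next, since $h\in C^1$, the fundamental theorem of calculus gives that $H$ is $C^1$ (in fact $C^2$) with $H(0)=0$ and $H'(t)=h(t)$. Define $\bz(t) := \bx(H(t))$ on the set of $t$ for which $H(t)$ lies in the maximal interval of $\bx$. Then $\bz(0) = \bx(H(0)) = \bx(0) = \bx_{\textrm{init}}$, matching the initial condition of $\Sigma_2$, and differentiating with the chain rule and using that $\bx$ solves $\Sigma_1$,
\[
\dot{\bz}(t) = \dot{\bx}(H(t))\,H'(t) = \nabla k(\bx(H(t)))\,h(t) = h(t)\,\nabla k(\bz(t)),
\]
so $\bz$ solves the ODE of $\Sigma_2$. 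By the uniqueness established above, $\bz$ is \emph{the} solution of $\Sigma_2$, which is the claim.

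I do not expect a genuine obstacle: the argument is a routine time-change. The only points needing care are (i) matching the regularity hypotheses exactly to what is used --- $k\in C^2$ to get $\bx\in C^1$ and a uniqueness theorem for $\Sigma_1$, and $h\in C^1$ to get $H\in C^1$ and a uniqueness theorem for $\Sigma_2$ --- so that the chain-rule step is legitimate; and (ii) the domain of validity, since the formula only makes sense where $H(t)$ lands in the maximal interval $I_{\bx}$ of $\bx$. For (ii) one notes that $\{t : H(t)\in I_{\bx}\}$ is open and contains $0$ by continuity of $H$, and its connected component of $0$ is precisely the maximal interval of $\bz$; if $\Sigma_1$ has a global solution (the regime the rest of the paper cares about) this is all of the relevant time axis. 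This is a technicality rather than a substantive step.
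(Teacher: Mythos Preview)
Your proof is correct and follows exactly the same approach as the paper: verify the initial condition, apply the chain rule to $\bz(t)=\bx(H(t))$ using $\dot\bx=\nabla k(\bx)$ and $H'(t)=h(t)$, and conclude by uniqueness. The paper's version is terser---it simply writes out the chain-rule computation without separately justifying well-posedness or the domain of validity---so your treatment of Picard--Lindel\"of and the maximal-interval caveat is more careful than what appears there, but the substance is identical.
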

\begin{figure}[ht]\centering
  \vspace{-0.5cm}
\begin{tikzpicture}[scale=0.6]
  \draw[->] (-5, 0) -- (6, 0) node[right] {$\theta_i$};
  \draw[->] (-5, 2.3) -- (6, 2.3) ;
  \draw (0.5,3)node[right] {Equilibrium-value $f_i^*$};

  \draw[->] (0.112, 0) -- (0.112, 5) node[right] {$f_i(\theta_i)$};
  \draw[domain=-5:6, samples=100, variable=\x, blue] plot ({\x}, {3*( (\x+2)^2*exp(-((\x+2)^4/4)) +  
  (\x)^2*exp(-((\x)^2/2)) - ((\x-2)/8)^2*exp(-((\x-2)^2/4)) )
  });
  
  \def\A{-3.15};
  \def\B{-2.27};
  \def\C{-0.96};
  \def\D{0.112};
  \def\E{1.42};
  \def\F{4.316};
  \def\G{5.543};
  
  \draw[thick,domain=-5:\A, samples=100, variable=\x, violet] plot ({\x}, {3*( (\x+2)^2*exp(-((\x+2)^4/4)) +  
  (\x)^2*exp(-((\x)^2/2)) - ((\x-2)/8)^2*exp(-((\x-2)^2/4)) )
  });
  \draw[thick,domain=\A:\B, samples=100, variable=\x, gray] plot ({\x}, {3*( (\x+2)^2*exp(-((\x+2)^4/4)) +  
  (\x)^2*exp(-((\x)^2/2)) - ((\x-2)/8)^2*exp(-((\x-2)^2/4)) )
  });
  \draw[thick,domain=\B:\C, samples=100, variable=\x, cyan] plot ({\x}, {3*( (\x+2)^2*exp(-((\x+2)^4/4)) +  
  (\x)^2*exp(-((\x)^2/2)) - ((\x-2)/8)^2*exp(-((\x-2)^2/4)) )
  });
  \draw[thick,domain=\C:\D, samples=100, variable=\x, brown] plot ({\x}, {3*( (\x+2)^2*exp(-((\x+2)^4/4)) +  
  (\x)^2*exp(-((\x)^2/2)) - ((\x-2)/8)^2*exp(-((\x-2)^2/4)) )
  });
  \draw[thick,domain=\D:\E, samples=100, variable=\x, magenta] plot ({\x}, {3*( (\x+2)^2*exp(-((\x+2)^4/4)) +  
  (\x)^2*exp(-((\x)^2/2)) - ((\x-2)/8)^2*exp(-((\x-2)^2/4)) )
  });
  \draw[thick,domain=\E:\F, samples=100, variable=\x, orange] plot ({\x}, {3*( (\x+2)^2*exp(-((\x+2)^4/4)) +  
  (\x)^2*exp(-((\x)^2/2)) - ((\x-2)/8)^2*exp(-((\x-2)^2/4)) )
  });
  \draw[thick,domain=\F:6, samples=100, variable=\x, blue] plot ({\x}, {3*( (\x+2)^2*exp(-((\x+2)^4/4)) +  
  (\x)^2*exp(-((\x)^2/2)) - ((\x-2)/8)^2*exp(-((\x-2)^2/4)) )
  });
  \foreach\m/\l in {\A/a,\B/b,\C/c,\D/d,\E/e,\F/f}
  {
    \def\k{\m-0.65}
    \filldraw[red] (\k,{3*( (\k+2)^2*exp(-((\k+2)^4/4)) +   (\k)^2*exp(-((\k)^2/2)) -   ((\k-2)/8)^2*exp(-((\k-2)^2/4)) )}) circle (1pt);
    \draw (\k+0.25,{0.22+ 3*((\k+2)^2*exp(-((\k+2)^4/4)) +   (\k)^2*exp(-((\k)^2/2)) -   ((\k-2)/8)^2*exp(-((\k-2)^2/4)) )}) node {\scriptsize (\l)};
    \def\fm{3*( (\m+2)^2*exp(-((\m+2)^4/4)) +    (\m)^2*exp(-((\m)^2/2)) - ((\m-2)/8)^2*exp(-((\m-2)^2/4)) )};
    \draw[-] (\m, {\fm-0.2}) -- (\m, {\fm+0.2});
  }
  \foreach\m/\l in {\G/g}
  {
    \def\k{\m-0.65}
    \filldraw[red] (\k,{3*( (\k+2)^2*exp(-((\k+2)^4/4)) +   (\k)^2*exp(-((\k)^2/2)) -   ((\k-2)/8)^2*exp(-((\k-2)^2/4)) )}) circle (1pt);
    \draw (\k+0.33,{0.3+ 3*((\k+2)^2*exp(-((\k+2)^4/4)) +   (\k)^2*exp(-((\k)^2/2)) -   ((\k-2)/8)^2*exp(-((\k-2)^2/4)) )}) node {\scriptsize (\l)};
    \def\fm{3*( (\m+2)^2*exp(-((\m+2)^4/4)) +    (\m)^2*exp(-((\m)^2/2)) - ((\m-2)/8)^2*exp(-((\m-2)^2/4)) )};
  }
  \foreach\m/\l in {\A/-3,\B/-2,\C/-1,\D/0,\E/1.5,\F/4}
  {
      \draw (\m,-0.3) node {\scriptsize \l};

  }
  \vspace{-3cm}
\end{tikzpicture}
\caption{Neither Gradient Descent nor Ascent can traverse stationary points. An immediate consequence of \Cref{lemma:reparametrization} is that if we initialize in the above example $\theta_i(0)$ at (a), $f_i(\theta_i(t))$ can not escape the purple section. This extends to cases where $\btheta_i$ is vector of variables.}
\label{fig:safety}
\end{figure}
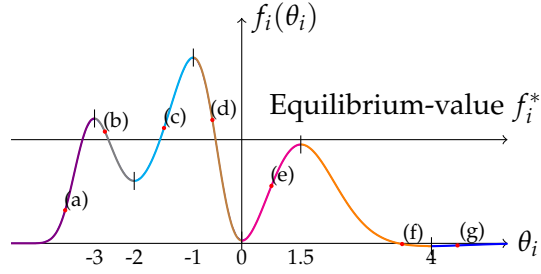
By choosing $h(t) = - \partial L (\bF(t), \bG(t))/\partial f_i$ and $h(t) = \partial L (\bF(t), \bG(t))/\partial g_j$ respectively, we can connect the dynamics of each $\btheta_i$ and $\bphi_j$ under \cref{eq:gda} to gradient ascent on $f_i$ and $g_j$. Applying \cref{lemma:reparametrization}, we get that trajectories of $\btheta_i$ and $\bphi_j$ under \cref{eq:gda} are restricted to be subsets of the corresponding gradient ascent trajectories with the same initializations. For example, in \Cref{fig:safety} $\theta_i(t)$ can not escape the purple section if it is initialized at (a) 
neither the orange section if it is initialiazed at (f). This limits the attainable values that $f_i(t)$ and $g_j(t)$ can take for a specific initialization. Let us thus define the following:   
\begin{definition}\label{definition:range}
For each initialization $\bx(0)$ of $\Sigma_1$, $\range{k}{\bx(0)}$ is the image of $k\circ \bx: \mathbb{R} \to \mathbb{R}$.
\end{definition}
Applying \Cref{definition:range} in the above example, $\range{f_i}{\theta_i(0)}=(f_i(-2),f_i(-1))$ if $\theta_i$ is initialized at (c).
Additionally, observe that in each colored section $f_i(\theta_i(t))$ uniquely identifies $\theta_i(t)$.
Generally, even in the case that $\btheta_i$ are vectors, \Cref{lemma:reparametrization} implies that for a given $\btheta_i(0)$, $f_i(\btheta_i(t))$ uniquely identifies $\btheta_i(t)$. 
As a result we get that a new dynamical system involving only $f_i$ and $g_j$ 
\begin{theorem} \label{theorem:reparametrization}
For each initialization $(\btheta(0),\bphi(0))$ of \cref{eq:gda}, there are $C^1$ functions 
$X_{\btheta_i(0)}~,~X_{\bphi_j(0)}$ such that $\btheta_i(t) = X_{\btheta_i(0)} (f_i(t))$ and $\bphi_j(t) = X_{\bphi_j(0)} (g_j(t))$. If $(\btheta(t),\bphi(t))$ satisfy \cref{eq:gda} then $f_i(t) = f_i(\pmb{\theta}_i(t))$ and $g_j(t) = g_j(\pmb{\phi}_j(t))$ satisfy 
\begin{equation} \ifdef{\app}{\tag{\ref{eq:gda-transformed}}}{\label{eq:gda-transformed}}
    \begin{aligned}
        \dot{f}_i &= - \norm{\nabla_{\btheta_i} f_i(X_{\btheta_i(0)}(f_i))}^2 \frac{\partial L}{\partial f_i} (\bF, \bG) \\
        \dot{g}_j &=  \norm{\nabla_{\bphi_j} g_j(X_{\bphi_j(0)} (g_j) )}^2 \frac{\partial L}{\partial g_j} (\bF, \bG)
    \end{aligned}
\end{equation}
\end{theorem}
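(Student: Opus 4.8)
The plan is to reduce everything to the one-dimensional bookkeeping supplied by \cref{lemma:reparametrization}. Fix an initialization and a solution $(\btheta(t),\bphi(t))$ of \cref{eq:gda}; since $f_i,g_j,L\in C^2$ the right-hand side of \cref{eq:gda} is $C^1$, so this solution is $C^2$ and, in particular, $h_i(t) := -\tfrac{\partial L}{\partial f_i}(\bF(t),\bG(t))$ and $h_j(t) := \tfrac{\partial L}{\partial g_j}(\bF(t),\bG(t))$ are well-defined $C^1$ functions of $t$. I would first observe that the $\btheta_i$-block of \cref{eq:gda} is literally system $\Sigma_2$ of \cref{lemma:reparametrization} with $k=f_i$ and time-weight $h_i$, so the lemma (together with uniqueness of solutions of $\Sigma_2$) gives $\btheta_i(t)=\mathbf{u}_i(\tau_i(t))$, where $\mathbf{u}_i$ solves the gradient ascent $\dot{\mathbf{u}}_i=\nabla f_i(\mathbf{u}_i)$, $\mathbf{u}_i(0)=\btheta_i(0)$, and $\tau_i(t)=\int_0^t h_i(s)\,\mathrm{d}s$; symmetrically $\bphi_j(t)=\mathbf{v}_j\!\left(\int_0^t h_j(s)\,\mathrm{d}s\right)$ for the gradient-ascent flow $\mathbf{v}_j$ of $g_j$. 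In particular $f_i(t)=(f_i\circ\mathbf{u}_i)(\tau_i(t))$ always lies in $\range{f_i}{\btheta_i(0)}$.

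Next I would show that the value $f_i(t)$ pins down $\btheta_i(t)$. The key point is that $\tfrac{\mathrm{d}}{\mathrm{d}s}(f_i\circ\mathbf{u}_i)(s)=\|\nabla f_i(\mathbf{u}_i(s))\|^2\ge 0$, so $f_i\circ\mathbf{u}_i$ is nondecreasing, and wherever it is locally constant its derivative vanishes, forcing $\nabla f_i(\mathbf{u}_i)=\dot{\mathbf{u}}_i=0$ and hence $\mathbf{u}_i$ constant there. Therefore $(f_i\circ\mathbf{u}_i)(s_1)=(f_i\circ\mathbf{u}_i)(s_2)$ implies $\mathbf{u}_i(s_1)=\mathbf{u}_i(s_2)$, so $\mathbf{u}_i$ factors as $\mathbf{u}_i=X_{\btheta_i(0)}\circ(f_i\circ\mathbf{u}_i)$ for a well-defined map $X_{\btheta_i(0)}:\range{f_i}{\btheta_i(0)}\to\mathbb{R}^{n_i}$; composing with $\tau_i$ gives $\btheta_i(t)=X_{\btheta_i(0)}(f_i(t))$. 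I would define $X_{\bphi_j(0)}$ from $\mathbf{v}_j$ in the same way.

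For the $C^1$ claim --- which I expect to be the only delicate point --- I would split into cases. If $\btheta_i(0)$ is already a critical point of $f_i$, then $\mathbf{u}_i$ and $\btheta_i(\cdot)$ are constant, $\range{f_i}{\btheta_i(0)}$ is a single point, and $X_{\btheta_i(0)}$ is a constant map. Otherwise, uniqueness of ODE solutions forbids the gradient flow from reaching a critical point --- this is exactly the ``cannot traverse a stationary point'' phenomenon of \cref{fig:safety} --- so $\nabla f_i(\mathbf{u}_i(s))\neq 0$ for every $s$, whence $\psi_i:=f_i\circ\mathbf{u}_i$ has $\psi_i'=\|\nabla f_i(\mathbf{u}_i)\|^2>0$ throughout and is a $C^1$-diffeomorphism from its domain onto the \emph{open} interval $\range{f_i}{\btheta_i(0)}$; then $X_{\btheta_i(0)}=\mathbf{u}_i\circ\psi_i^{-1}$ is a composition of $C^1$ maps (indeed $\mathbf{u}_i\in C^2$), hence $C^1$, and likewise for $X_{\bphi_j(0)}$. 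The potential subtlety is behaviour at the limiting values of $f_i$ as $s$ tends to the ends of the flow's maximal interval: these are precisely the endpoints excluded from the open interval $\range{f_i}{\btheta_i(0)}$, so they cause no problem.

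Finally, the transformed equations follow by a one-line chain-rule computation along \cref{eq:gda}: $\dot f_i=\nabla_{\btheta_i}f_i(\btheta_i)^\top\dot\btheta_i=-\|\nabla_{\btheta_i}f_i(\btheta_i)\|^2\,\tfrac{\partial L}{\partial f_i}(\bF,\bG)$, and substituting $\btheta_i=X_{\btheta_i(0)}(f_i)$ gives the first line of \cref{eq:gda-transformed}; the computation for $\dot g_j$ is identical up to the sign coming from the ascent step. So the whole argument is: transport to gradient-flow coordinates via \cref{lemma:reparametrization}, exploit monotonicity of $f_i$ along that flow to extract the reparametrizing map $X_{\btheta_i(0)}$, verify its regularity (the one real obstacle), and close with the chain rule.
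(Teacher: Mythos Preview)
Your proposal is correct and follows essentially the same route as the paper: both apply \cref{lemma:reparametrization} to identify $\btheta_i(t)$ with a time-reparametrized gradient-ascent trajectory $\gamma_{\btheta_i(0)}$ (your $\mathbf{u}_i$), exploit the monotonicity of $f_i$ along that flow to invert $A_{\btheta_i(0)}:=f_i\circ\gamma_{\btheta_i(0)}$ (your $\psi_i$), set $X_{\btheta_i(0)}=\gamma_{\btheta_i(0)}\circ A_{\btheta_i(0)}^{-1}$, and finish with the chain rule. If anything, you are slightly more explicit than the paper about why strict monotonicity holds in the non-stationary case (uniqueness of ODE solutions prevents the flow from reaching a critical point), which the paper asserts without comment.
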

By determining the ranges of $f_i$ and $g_j$, an initialization clearly dictates if a von Neumann solution is attainable.  In \Cref{fig:safety} for example, any point of the pink, orange or blue colored section like (e), (f) or (g) can not converge to a von Neumann solution with $f_i(\theta_i)=f_i^*$.
The notion of \emph{safety} captures which initializations can converge to a given element of \saddle($L$).
\begin{definition}\label{definition:safe-conditions}. 
We will call the initialization $(\btheta(0),\bphi(0))$ safe for a $(\bp, \bq) \in \saddle(L)$ if $\bphi_i(0)$ and $\btheta_j(0)$ are not stationary points of $f_i$ and $g_j$ respectively and $p_i \in \range{f_i}{\btheta_i(0)}$ and  $q_j \in \range{g_j}{\bphi_j(0)}$.
\end{definition}

Finally, in the following sections we use some fundamental notions of stability. We call an equilibrium $\mathbf{x}^*$ of an autonomous dynamical system $\dot{\mathbf{x}}=\mathcal{D}(\mathbf{x}(t))$ \emph{stable} if for every neighborhood $U$ of $\mathbf{x}^*$ there is a neighborhood $V$ of $\mathbf{x}^*$ such that if $\mathbf{x}(0) \in V$ then $\mathbf{x}(t)\in U$ for all $t\geq 0$. We call a set $S$ \emph{ asymptotically stable} if there exists a neighborhood $\mathcal{R}$ such that for any initialization $\mathbf{x}(0)\in \mathcal{R}$, $\mathbf{x}(t)$ approaches $S$ as $t\to + \infty$. If $\mathcal{R}$ is the whole space the set \emph{globally asymptotically stable}.
\section{Learning in Hidden Convex Concave Games} \label{sec:results}
\subsection{General Case}

Our main results are based on designing a Lyapunov function for the dynamics of \cref{eq:gda-transformed}:
\begin{lemma}\label{lemma:lyapunov-function}
If $L$ is convex concave and $(\bphi(0),\btheta(0))$ is a safe for $(\bp, \bq) \in \saddle(L)$, then the following quantity is non-increasing under the dynamics of \cref{eq:gda-transformed}:
\begin{align}
\ifdef{\app}{\tag{\ref{eq:lyuapunov-hamiltonian}}}{\label{eq:lyuapunov-hamiltonian}}
    H(\bF,\bG) &= \sum_{i=1}^{N} \int_{p_{i}}^{f_{i}} \frac{z-p_{i}}{\norm{\nabla f_{i}(X_{\btheta_{i}(0)}(z))}^2} \mathrm{d}z + \sum_{j=1}^{M} \int_{q_{j}}^{g_{j}} \frac{z-q_{j}}{\norm{\nabla g_{j}(X_{\bphi_{j}(0)}(z))}^2} \mathrm{d}z
\end{align}
\end{lemma}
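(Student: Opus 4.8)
The plan is to show that $\dot H \le 0$ along every solution of \cref{eq:gda-transformed}, by reducing $\dot H$ to a first‑order expression and then invoking convex‑concavity of $L$ together with the saddle‑point property of $(\bp,\bq)$.

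First I would record what the safety hypothesis provides. By \Cref{definition:safe-conditions}, $\btheta_i(0)$ is not stationary for $f_i$ and $\bphi_j(0)$ is not stationary for $g_j$, so by \Cref{lemma:reparametrization} the whole gradient‑ascent trajectory through $\btheta_i(0)$ — hence, by \Cref{theorem:reparametrization}, the whole GDA trajectory $\btheta_i(t)$ — stays on the portion of that trajectory on which $\nabla f_i$ never vanishes, and $\range{f_i}{\btheta_i(0)}$ is an open interval containing both $p_i$ and every value $f_i(t)$. Consequently $X_{\btheta_i(0)}$ is a well‑defined $C^1$ map on a neighbourhood of the segment joining $p_i$ and $f_i(t)$, the denominator $\|\nabla f_i(X_{\btheta_i(0)}(z))\|^2$ is continuous and strictly positive there, and the corresponding term of \cref{eq:lyuapunov-hamiltonian} is a well‑defined, differentiable function of $t$; likewise for the $g_j$ terms.

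Next I would differentiate term by term. By the fundamental theorem of calculus and the chain rule,
\[
\frac{\mathrm d}{\mathrm dt}\int_{p_i}^{f_i}\frac{z-p_i}{\|\nabla f_i(X_{\btheta_i(0)}(z))\|^2}\,\mathrm dz
=\frac{f_i-p_i}{\|\nabla f_i(X_{\btheta_i(0)}(f_i))\|^2}\,\dot f_i ,
\]
and substituting $\dot f_i$ from \cref{eq:gda-transformed} the squared‑gradient factors cancel, leaving exactly $-(f_i-p_i)\,\frac{\partial L}{\partial f_i}(\bF,\bG)$. The same computation for the $g_j$ integrals, with the opposite sign from \cref{eq:gda-transformed}, gives $+(g_j-q_j)\,\frac{\partial L}{\partial g_j}(\bF,\bG)$. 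Summing,
\[
\dot H=-\sum_{i=1}^N(f_i-p_i)\frac{\partial L}{\partial f_i}(\bF,\bG)+\sum_{j=1}^M(g_j-q_j)\frac{\partial L}{\partial g_j}(\bF,\bG)=-\langle\bF-\bp,\nabla_{\bF}L\rangle+\langle\bG-\bq,\nabla_{\bG}L\rangle ,
\]
where $\nabla_{\bF}L,\nabla_{\bG}L$ denote the gradients of $L$ in its first and second blocks of arguments, evaluated at $(\bF,\bG)$.

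Finally I would close using convexity and concavity. Since $L(\cdot,\bG)$ is convex, $\langle\bF-\bp,\nabla_{\bF}L\rangle\ge L(\bF,\bG)-L(\bp,\bG)$; since $L(\bF,\cdot)$ is concave, $\langle\bG-\bq,\nabla_{\bG}L\rangle\le L(\bF,\bG)-L(\bF,\bq)$. Plugging these into the expression for $\dot H$ cancels the $L(\bF,\bG)$ terms and yields $\dot H\le L(\bp,\bG)-L(\bF,\bq)$. Because $(\bp,\bq)\in\saddle(L)$, the saddle inequalities $L(\bp,\bG)\le L(\bp,\bq)\le L(\bF,\bq)$ hold, so $\dot H\le 0$, which is the claim. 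I do not anticipate a genuine obstacle here: once \Cref{theorem:reparametrization} is in hand the differentiation is a one‑line exercise, and the only delicate point is the well‑definedness in the first step — that the segment from $p_i$ to $f_i(t)$ stays inside $\range{f_i}{\btheta_i(0)}$ and away from zeros of $\nabla f_i$ — which is exactly what \Cref{definition:safe-conditions} and \Cref{lemma:reparametrization} were arranged to guarantee.
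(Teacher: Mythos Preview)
Your proposal is correct and follows essentially the same route as the paper: differentiate $H$ termwise so that the squared-gradient denominators cancel against the factors in \cref{eq:gda-transformed}, obtain $\dot H=-\langle\bF-\bp,\nabla_{\bF}L\rangle+\langle\bG-\bq,\nabla_{\bG}L\rangle$, and then apply the first-order convexity/concavity inequalities together with the saddle property of $(\bp,\bq)$ to conclude $\dot H\le 0$. Your extra paragraph on well-definedness of the integrals from the safety hypothesis is a welcome addition that the paper leaves implicit.
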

\begin{wrapfigure}{r}[10pt]{7cm}
    \vspace{-0.5cm}
    \centering
    \includegraphics[width=\linewidth]{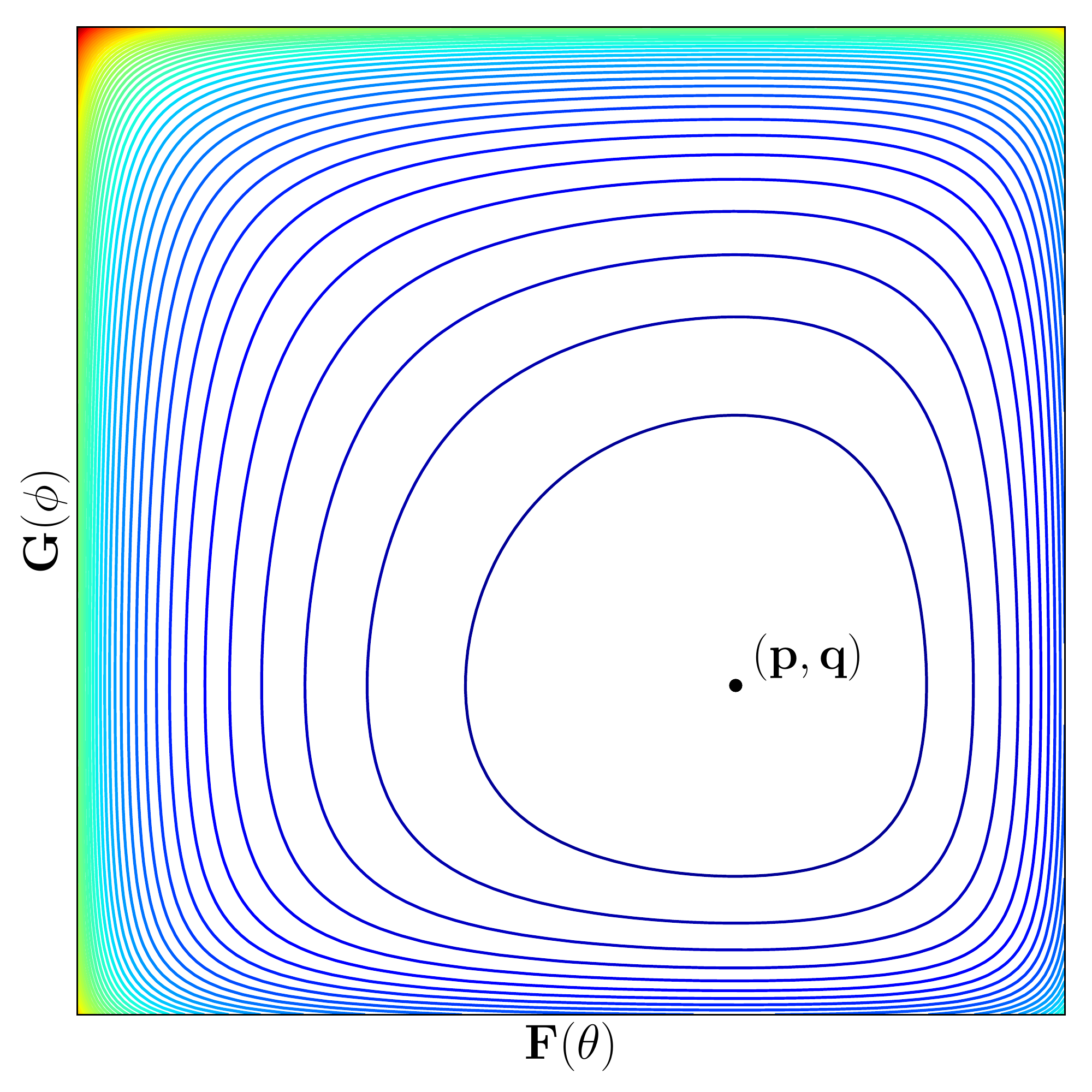}
    \vspace{-0.7cm}
    \caption{Level sets of Lyapunov function of \cref{eq:lyuapunov-hamiltonian} for both $\bF$ and $\bG$ being one dimensional sigmoid functions.}
    \label{fig:sigmoid}
    \vspace{-1cm}
    \end{wrapfigure}
Observe that our Lyapunov function here is not the distance to $(\bp, \bq)$ as in a classical convex concave game. The gradient terms account for the non constant multiplicative terms in \cref{eq:gda-transformed}. Indeed if the game was not hidden and $f_i$ and $g_j$ were the identity functions then $H$ would coincide with the Euclidean distance to $(\bp, \bq)$. Our first theorem employs the above Lyapunov function to show that $(\bp, \bq)$ is stable for \cref{eq:gda-transformed}.
\begin{theorem}\label{th:conv-conc-transformed}
If $L$ is convex concave and $(\bphi(0),\btheta(0))$ is a safe for $(\bp, \bq) \in \saddle(L)$, then $(\bp, \bq)$ is stable for \cref{eq:gda-transformed}.
\end{theorem}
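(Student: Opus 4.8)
The plan is to deduce (Lyapunov) stability of $(\bp,\bq)$ directly from the fact, already established in \Cref{lemma:lyapunov-function}, that $H$ is non-increasing along solutions of \cref{eq:gda-transformed}, by checking that $H$ is a \emph{bona fide} Lyapunov function, i.e.\ that it is continuous and positive definite in a neighbourhood of $(\bp,\bq)$. First I would pin down the domain on which \cref{eq:gda-transformed} actually lives: writing $I_i^f := \range{f_i}{\btheta_i(0)}$ and $I_j^g := \range{g_j}{\bphi_j(0)}$, the right-hand side of \cref{eq:gda-transformed} is only defined on the open box $D := \prod_i I_i^f \times \prod_j I_j^g$, and safety of the initialization (\Cref{definition:safe-conditions}) guarantees $(\bp,\bq)\in D$. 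Moreover $(\bp,\bq)$ is an equilibrium of \cref{eq:gda-transformed}: since $(\bp,\bq)\in\saddle(L)$ is a stationary point of $L$, we have $\partial L/\partial f_i(\bp,\bq)=\partial L/\partial g_j(\bp,\bq)=0$, which kills both right-hand sides. Finally, by \Cref{theorem:reparametrization} (equivalently \Cref{lemma:reparametrization}), every trajectory of \cref{eq:gda-transformed} issuing from $D$ is a reparametrized gradient-ascent trajectory of the $f_i$'s and $g_j$'s and therefore stays in $D$ for all time, so $H$ and the dynamics are defined along the entire forward orbit.

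Next I would verify that $H:D\to\mathbb{R}$ is finite, continuous (in fact $C^1$), and positive definite relative to $(\bp,\bq)$. The only possible source of trouble is the denominator $\norm{\nabla f_i(X_{\btheta_i(0)}(z))}^2$ (and its $g_j$ analogue). But safety says $\btheta_i(0)$ is not a stationary point of $f_i$, and since a gradient-ascent trajectory cannot reach a stationary point in finite time, $X_{\btheta_i(0)}(z)$ is never stationary for $z\in I_i^f$; hence $\nabla f_i(X_{\btheta_i(0)}(z))\neq 0$ on $I_i^f$ and the integrand $z\mapsto (z-p_i)/\norm{\nabla f_i(X_{\btheta_i(0)}(z))}^2$ is continuous there with the same sign as $z-p_i$. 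Consequently each summand $\int_{p_i}^{f_i}(\cdot)\,\mathrm dz$ is nonnegative and vanishes iff $f_i=p_i$, and likewise for the $g_j$ terms, so $H\ge 0$ on $D$ with equality exactly at $(\bF,\bG)=(\bp,\bq)$. Combined with $\dot H\le 0$ from \Cref{lemma:lyapunov-function}, this makes $H$ a Lyapunov function for $(\bp,\bq)$.

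It then remains to run the standard sublevel-set trapping argument. Given an arbitrary neighbourhood $U$ of $(\bp,\bq)$, shrink it to a bounded open $U'\subseteq U$ with $\overline{U'}\subset D$ (possible since $D$ is open); by compactness of $\partial U'$ and positive definiteness of $H$, the number $c:=\min_{\partial U'}H$ is strictly positive, and $V:=\{x\in U': H(x)<c\}$ is an open neighbourhood of $(\bp,\bq)$ because $H$ is continuous with $H(\bp,\bq)=0$. If $x(0)\in V$, then $H(x(t))\le H(x(0))<c$ for all $t\ge0$ by monotonicity; since $t\mapsto x(t)$ is continuous and remains in $D$, it cannot cross $\partial U'$, where $H\ge c$, so $x(t)\in U'\subseteq U$ for all $t\ge0$. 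This is precisely the definition of stability of $(\bp,\bq)$ for \cref{eq:gda-transformed}.

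The main obstacle is the second step: one must be sure the Lyapunov candidate is non-degenerate — that the weights $1/\norm{\nabla f_i}^2$ neither blow up nor vanish near the equilibrium (which is exactly what the interiority ``$p_i\in I_i^f$'' from safety buys) and that trajectories never leave the set $D$ on which $H$ is defined (which is what the reparametrization of \Cref{theorem:reparametrization} buys). Once these two points are in hand, the rest is textbook. Note that this argument only yields stability, not asymptotic stability, because $H$ need not strictly decrease; attractivity is deferred to the later theorems where strict convex-concavity of $L$ is assumed.
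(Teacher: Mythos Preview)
Your proposal is correct and follows essentially the same approach as the paper: verify that $H$ from \Cref{lemma:lyapunov-function} is a positive-definite function on the open box $D=\prod_i\range{f_i}{\btheta_i(0)}\times\prod_j\range{g_j}{\bphi_j(0)}$ with $\dot H\le 0$, then invoke Lyapunov's stability theorem. You are more careful than the paper in spelling out why the denominators in $H$ do not vanish on $D$, why $(\bp,\bq)$ is an equilibrium of \cref{eq:gda-transformed}, and why trajectories remain in $D$; the paper simply asserts $H$ is ``well defined'' on $D$, observes each summand is minimized at $p_i$ (resp.\ $q_j$) by a monotonicity argument, and applies Lyapunov's theorem as a black box rather than re-running the sublevel-set trapping argument.
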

Clearly, for the special case of globally invertible functions $\bF,\bG$ we could come up with an equivalent Lyapunov function in the $\btheta,\bphi$-space. In this case it is straightforward to transfer the stability results from the induced dynamical system of $\bF,\bG$ (\Cref{eq:gda-transformed}) to the initial dynamical system of $\btheta,\bphi$ (\Cref{eq:gda}). 
For example we can prove the following result:
\begin{theorem}\label{th:stability_sigmoid_main}
If $f_i$ and $g_j$ are sigmoid functions and $L$ is convex concave and there is a $(\bphi(0),\btheta(0))$ that is safe for $(\bp, \bq) \in \saddle(L)$, then $(\bF^{-1}(\bp), \bG^{-1}(\bq))$ is stable for \cref{eq:gda}.
\end{theorem}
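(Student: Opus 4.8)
The plan is to use the fact that a sigmoid is a $C^2$ diffeomorphism $\sigma\colon\mathbb{R}\to(0,1)$ with $\sigma'>0$ everywhere, so that $\bF$ and $\bG$ are genuine changes of variables and $\Psi(\btheta,\bphi):=(\bF(\btheta),\bG(\bphi))$ is a diffeomorphism conjugating \cref{eq:gda} to \cref{eq:gda-transformed}. Since \Cref{th:conv-conc-transformed} already gives stability of $(\bp,\bq)$ for \cref{eq:gda-transformed}, and a diffeomorphic conjugacy carries stable equilibria to stable equilibria, the theorem follows once the conjugacy is set up carefully. The real content is verifying that, in the sigmoid case, the passage to \cref{eq:gda-transformed} is a single, \emph{initialization-independent} change of coordinates defined on the whole image.

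First I would note that gradient ascent on $f_i$, namely $\dot\theta_i=\sigma'(\theta_i)$, is strictly increasing and, because $\int\mathrm{d}\theta/\sigma'(\theta)$ diverges at both $\pm\infty$, its trajectory from any $\theta_i(0)$ sweeps out all of $\mathbb{R}$ without reaching the endpoints in finite time. Hence $\range{f_i}{\theta_i(0)}=(0,1)$ for every initialization, the reparametrizing map $X_{\btheta_i(0)}$ of \Cref{theorem:reparametrization} equals $\sigma^{-1}$ regardless of $\theta_i(0)$, and likewise for the $g_j$. So \cref{eq:gda-transformed} collapses to a single autonomous system on the open box $(0,1)^{n}\times(0,1)^{m}$, with $\norm{\nabla f_i(X_{\btheta_i(0)}(f_i))}^2=(f_i(1-f_i))^2$ and similarly for $g_j$; its equilibria are the stationary points of $L$ inside the box, in particular $(\bp,\bq)$. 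Next, safety of the hypothesized initialization forces $p_i\in\range{f_i}{\btheta_i(0)}=(0,1)$ and $q_j\in(0,1)$, so $(\bp,\bq)\in(0,1)^{n}\times(0,1)^{m}$ and the points $\bF^{-1}(\bp),\bG^{-1}(\bq)$ are well defined and unique.

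Then I would record the conjugacy: the Jacobian of $\Psi$ is diagonal with strictly positive entries $\sigma'(\cdot)$, so $\Psi$ is a $C^2$ diffeomorphism from $\mathbb{R}^{n+m}$ onto $(0,1)^{n}\times(0,1)^{m}$, and by \Cref{theorem:reparametrization} every solution $(\btheta(t),\bphi(t))$ of \cref{eq:gda} (which exists for all $t\ge0$ since the right-hand side of \cref{eq:gda} is bounded) is mapped by $\Psi$ to a solution of \cref{eq:gda-transformed}, and is recovered from it by $\Psi^{-1}$. Transferring stability is then routine: given a neighborhood $U$ of $(\bF^{-1}(\bp),\bG^{-1}(\bq))$, the set $\Psi(U)$ is an open neighborhood of $(\bp,\bq)$, so \Cref{th:conv-conc-transformed} supplies a neighborhood $W\ni(\bp,\bq)$ such that every solution of \cref{eq:gda-transformed} started in $W$ remains in $\Psi(U)$; then $V:=\Psi^{-1}(W)$ is a neighborhood of $(\bF^{-1}(\bp),\bG^{-1}(\bq))$, and any solution of \cref{eq:gda} started in $V$ satisfies $\Psi(\btheta(t),\bphi(t))\in\Psi(U)$, hence $(\btheta(t),\bphi(t))\in U$, for all $t\ge0$.

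The main obstacle is the point flagged at the outset: one must justify that for sigmoids the initialization-dependent objects $X_{\btheta_i(0)},X_{\bphi_j(0)}$ and the domain of \cref{eq:gda-transformed} become initialization-free (equal to $\sigma^{-1}$ and to the full open box), so that the \emph{global} invertibility of $\bF,\bG$ can actually be invoked to push stability back to the $\btheta,\bphi$-coordinates; everything after that is the standard observation that topological conjugacy preserves Lyapunov stability. (The same template would work verbatim for any family of $f_i,g_j$ that are $C^2$ diffeomorphisms onto their images whose gradient-ascent orbits have full range, which is exactly where the hypothesis is used.)
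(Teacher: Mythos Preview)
Your proposal is correct and follows essentially the same approach as the paper: both proofs observe that for sigmoids the transformed system \cref{eq:gda-transformed} becomes a single initialization-independent system on $(0,1)^n\times(0,1)^m$ (with $\norm{\nabla f_i}^2=f_i^2(1-f_i)^2$), invoke \Cref{th:conv-conc-transformed} to obtain stability of $(\bp,\bq)$ there, and then transfer stability back to \cref{eq:gda} via the diffeomorphism/topological conjugacy $\Psi=(\bF,\bG)$. Your write-up is in fact more careful than the paper's in spelling out why $\range{f_i}{\theta_i(0)}=(0,1)$ for every initialization, why safety forces $(\bp,\bq)$ into the open box, and why solutions of \cref{eq:gda} exist for all $t\ge0$.
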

In the general case though, stability may not be guaranteed in the parameter space of \cref{eq:gda}. We will instead prove a weaker notion of stability, which we call hidden stability. Hidden stability captures that if  $(\bF(\btheta(0)),\bG(\bphi(0)))$ is close to a von Neumann solution, then $(\bF(\btheta(t)),\bG(\bphi(t)))$ will remain close to that solution. Even though hidden stability is weaker, it is essentially what we are interested in, as the output space determines the utility that each player gets.  Here we provide sufficient conditions for hidden stability.
\begin{theorem}[Hidden Stability]\label{th:hidden_meta_stability}
Let $(\bp, \bq) \in \saddle(L)$. Let $R_{f_i}$ and $R_{g_j}$ be the set of regular values\footnote{
A value $a\in \operatorname{Im}f$ is called a regular value of $f$ if $\forall q\in \operatorname{dom}f: f(q)=a$, it holds $\nabla f(q)\neq \mathbf{0}$.
} of $f_i$ and $g_j$ respectively. Assume that there is a $\xi > 0$ such that $[p_i -\xi, p_i + \xi] \subseteq R_{f_i}$ and $[q_j -\xi, q_j + \xi] \subseteq R_{g_j}$. Define 
\begin{equation*}
   r(t) = \norm{\bF(\btheta(t)) - \bp}^2 + \norm{\bG(\bphi(t)) - \bq}^2. 
\end{equation*}
If $f_i$ and $g_j$ are proper functions\footnote{A function is proper if inverse images of compact subsets are compact.}, then for every $\epsilon > 0$, there is an $\delta >0 $ such that
\begin{equation*}
    r(0) < \delta \implies \forall t\geq 0 : r(t) < \epsilon.
\end{equation*}
\end{theorem}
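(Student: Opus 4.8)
The natural Lyapunov function here is $H$ from \cref{eq:lyuapunov-hamiltonian}: by \cref{lemma:lyapunov-function} it is non-increasing along \cref{eq:gda-transformed} whenever the initialization is safe for $(\bp,\bq)$, and, crucially, both $H$ and $r$ are functions of the output pair $(\bF,\bG)$ only. My plan has three steps: (i) show that $r(0)$ small already forces the initialization to be safe for $(\bp,\bq)$, so that $H$ is available along the trajectory; (ii) establish a two-sided comparison $\tfrac{1}{2C}\,r\le H\le\tfrac{1}{2c}\,r$ valid on the fixed neighborhood $\{r\le\xi^2\}$ of $(\bp,\bq)$, with constants $0<c\le C$ depending only on the $f_i,g_j$ and on $(\bp,\bq,\xi)$ and \emph{not} on the initialization; and (iii) run the standard Lyapunov continuation argument. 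For (i): if $r(0)<\delta\le\xi^2$ then every coordinate satisfies $|f_i(\btheta_i(0))-p_i|<\xi$ and $|g_j(\bphi_j(0))-q_j|<\xi$, so these are regular values and hence $\btheta_i(0),\bphi_j(0)$ are not stationary; moreover, along the gradient-ascent orbit of $f_i$ through $\btheta_i(0)$ (whose value set is $\range{f_i}{\btheta_i(0)}$ by \cref{definition:range,lemma:reparametrization}) the function $f_i$ is strictly monotone, and a short argument using properness and $\int\norm{\nabla f_i}^2<\infty$ shows its supremum along the orbit cannot be a finite regular value $\le p_i$ (otherwise the orbit stays in the compact set $f_i^{-1}([f_i(\btheta_i(0)),b_i])$ and produces a critical point of value $b_i\in[p_i-\xi,p_i]\subseteq R_{f_i}$, a contradiction), and symmetrically for its infimum; so the intermediate value theorem gives $p_i\in\range{f_i}{\btheta_i(0)}$ and likewise $q_j\in\range{g_j}{\bphi_j(0)}$, i.e.\ the initialization is safe and \cref{lemma:lyapunov-function} applies.

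For (ii): since $f_i$ is proper and $[p_i-\xi,p_i+\xi]\subseteq R_{f_i}$, the set $K_i:=f_i^{-1}([p_i-\xi,p_i+\xi])$ is compact and $\norm{\nabla f_i}^2$ is continuous and strictly positive on it, so $0<c_i:=\min_{K_i}\norm{\nabla f_i}^2\le\max_{K_i}\norm{\nabla f_i}^2=:C_i<\infty$; define $c'_j,C'_j$ analogously for $g_j$ and put $c:=\min_i c_i\wedge\min_j c'_j$ and $C:=\max_i C_i\vee\max_j C'_j$. Writing $u_i:=f_i(\btheta_i(t))$ and $v_j:=g_j(\bphi_j(t))$, for any value $z$ between $p_i$ and $u_i$ with $|z-p_i|\le\xi$ we have $X_{\btheta_i(0)}(z)\in K_i$, hence $c_i\le\norm{\nabla f_i(X_{\btheta_i(0)}(z))}^2\le C_i$; integrating the integrand of the $i$-th term of $H$ from $p_i$ to $u_i$ then gives $\tfrac{(u_i-p_i)^2}{2C_i}\le\int_{p_i}^{u_i}\tfrac{z-p_i}{\norm{\nabla f_i(X_{\btheta_i(0)}(z))}^2}\mathrm{d}z\le\tfrac{(u_i-p_i)^2}{2c_i}$ (the sign of $z-p_i$ and the orientation of the integral cancel, so this holds regardless of whether $u_i>p_i$ or $u_i<p_i$). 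Summing over all $i$ and the analogous estimates over all $j$, and noting that $r\le\xi^2$ forces $|u_i-p_i|\le\xi$ and $|v_j-q_j|\le\xi$ for every coordinate, we obtain $\tfrac{1}{2C}\,r\le H\le\tfrac{1}{2c}\,r$ on $\{r\le\xi^2\}$, with $c,C$ independent of the initialization.

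For (iii): given $\epsilon>0$, we may assume $\epsilon\le\xi^2$ and set $\delta:=\min\{\xi^2,\tfrac{c}{C}\epsilon\}$. Suppose $r(0)<\delta$ but $r(t)\ge\epsilon$ for some $t>0$, and let $T:=\inf\{t\ge0:r(t)=\epsilon\}$, so that by continuity $r(T)=\epsilon$ and $r(t)<\epsilon\le\xi^2$ on $[0,T)$. On $[0,T]$ the comparison of (ii) applies, and by (i) and \cref{lemma:lyapunov-function} $H$ is non-increasing along the trajectory, so
\[
\frac{\epsilon}{2C}=\frac{r(T)}{2C}\le H(T)\le H(0)\le\frac{r(0)}{2c}<\frac{\delta}{2c}\le\frac{\epsilon}{2C},
\]
a contradiction. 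Hence $r(t)<\epsilon$ for all $t\ge0$, which is the claim (and in fact $\delta$ may be chosen uniformly over all initializations with $r(0)<\delta$).

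\textbf{Main obstacle.} The delicate point is step (ii). The weights $\norm{\nabla f_i(X_{\btheta_i(0)}(z))}^{-2}$ appearing in $H$ are \emph{a priori} initialization dependent and may blow up or vanish near critical values, so in general $H$ need not be comparable to $r$ and the Lyapunov argument would break down. The two hypotheses of the theorem --- that $p_i,q_j$ sit in the interior of a band of \emph{regular} values and that $f_i,g_j$ are \emph{proper} --- are exactly what lets me replace the orbit-restricted quantity $\norm{\nabla f_i(X_{\btheta_i(0)}(z))}^2$ by its extrema over the \emph{whole} compact preimage $K_i$, yielding a squeeze with initialization-free constants. The secondary nuisance is verifying safety in step (i), which needs the small dynamical-systems fact that a gradient-ascent orbit through a regular point near $p_i$ cannot stall before reaching the value $p_i$; this again leans on properness to keep the orbit in a compact set. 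Everything downstream of these two points is the routine Lyapunov / first-exit-time argument.
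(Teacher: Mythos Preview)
Your proof is correct and follows essentially the same route as the paper: both use properness and the regular-value assumption to obtain uniform bounds $0<c\le\norm{\nabla f_i}^2,\norm{\nabla g_j}^2\le C$ on the compact preimages $f_i^{-1}([p_i-\xi,p_i+\xi])$, $g_j^{-1}([q_j-\xi,q_j+\xi])$, turn these into a two-sided comparison between $H$ and $r$, and then run a Lyapunov/first-exit argument with $\delta=\tfrac{c}{C}\min\{\epsilon,\xi^2\}$. Your safety argument in step (i) makes explicit the properness-based ``critical $\omega$-limit point'' step that the paper only sketches, and your first-exit-time formulation in (iii) is a clean repackaging of the paper's contradiction showing the trajectory never leaves the set $S$; otherwise the arguments coincide.
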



Unfortunately  hidden stability still does not imply convergence to von Neumann solutions. 
\cite{vlatakis2019poincare} studied hidden bilinear games and proved that $\dot{H}=0$ for this special class of HCC games. Hence, a trajectory is restricted to be a subset of a level set of $H$ which is bounded away from the equilibrium as shown in \Cref{fig:sigmoid}.
To sidestep this, we will require in the next subsection the hidden game to be strictly convex concave.

\subsection{Hidden strictly convex concave games} \label{sec:strict}
In this subsection we focus on the case where $L$ is a strictly convex concave function. Based on \cref{def:conv-conc}, a strictly convex concave game is not necessarily strictly convex strictly concave and thus it may have a continum of von Neumann solutions. Despite this, LaSalle's invariance principle, combined with the strict convexity concavity, allows us to prove that if $(\btheta(0), \bphi(0))$ is safe for $Z \subseteq \saddle(L)$ then $Z$ is locally asymptotically stable for \cref{eq:gda-transformed}.
\begin{lemma}\label{lemma:las}
Let $L$ be strictly convex concave and $Z \subset \saddle(L)$ is the non empty set of equilbria of $L$ for which $(\btheta(0),\bphi(0))$ is safe. Then $Z$ is locally asymptotically stable for \cref{eq:gda-transformed}.

\end{lemma}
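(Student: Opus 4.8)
The plan is to apply LaSalle's invariance principle with the Lyapunov function $H$ of \Cref{lemma:lyapunov-function}, and to use strict convex-concavity to show that the largest invariant set inside $\{\dot H=0\}$ is contained in $Z$. Fix a reference point $(\bp_0,\bq_0)\in Z$ (which exists since $Z\neq\emptyset$) and let $H=H_{\bp_0,\bq_0}$ as in \Cref{eq:lyuapunov-hamiltonian}. Since the initialization is safe for $(\bp_0,\bq_0)$, $H$ is well defined, nonnegative and $C^1$ on the open box $\mathcal{B}=\prod_i\range{f_i}{\btheta_i(0)}\times\prod_j\range{g_j}{\bphi_j(0)}$, vanishes only at $(\bp_0,\bq_0)$, the gradients $\nabla f_i,\nabla g_j$ are nonzero on the corresponding ranges, and the trajectories of \Cref{eq:gda-transformed} stay in $\mathcal{B}$ (\Cref{theorem:reparametrization}). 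Differentiating $H$ along \Cref{eq:gda-transformed} and cancelling the $\norm{\nabla f_i}^2,\norm{\nabla g_j}^2$ factors,
\[
\dot H=-\langle\bF-\bp_0,\nabla_\bF L(\bF,\bG)\rangle+\langle\bG-\bq_0,\nabla_\bG L(\bF,\bG)\rangle=-(\alpha+\beta+\gamma+\gamma'),
\]
where $\alpha=\langle\nabla_\bF L,\bF-\bp_0\rangle-(L(\bF,\bG)-L(\bp_0,\bG))\ge 0$ by convexity of $L(\cdot,\bG)$, $\beta=(L(\bF,\bG)-L(\bF,\bq_0))-\langle\nabla_\bG L,\bG-\bq_0\rangle\ge 0$ by concavity of $L(\bF,\cdot)$, and $\gamma=L(\bp_0,\bq_0)-L(\bp_0,\bG)\ge 0$, $\gamma'=L(\bF,\bq_0)-L(\bp_0,\bq_0)\ge 0$ since $(\bp_0,\bq_0)\in\saddle(L)$. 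In particular $\dot H\le 0$, recovering \Cref{lemma:lyapunov-function}.

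The heart of the argument is identifying the largest invariant set $M$ contained in $\{(\bF,\bG)\in\mathcal{B}:\dot H=0\}$, which is in general strictly larger than $Z$ because strict convexity or strict concavity holds in only one of the two arguments at each point. On $M$ we have $\alpha\equiv\beta\equiv\gamma\equiv\gamma'\equiv 0$. From $\gamma\equiv 0$, $\bG$ maximizes $L(\bp_0,\cdot)$, hence $\nabla_\bG L(\bp_0,\bG)=0$; from $\gamma'\equiv 0$, similarly $\nabla_\bF L(\bF,\bq_0)=0$. By \Cref{def:conv-conc}, at each point of $M$ either $L(\cdot,\bG)$ is strictly convex — and $\alpha=0$ then forces $\bF=\bp_0$ — or $L(\bF,\cdot)$ is strictly concave — and $\beta=0$ then forces $\bG=\bq_0$. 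Now take a trajectory contained in $M$ with time interval $I$ and set $S_1=\{t\in I:\bF(t)=\bp_0\}$, $S_2=\{t\in I:\bG(t)=\bq_0\}$; these are closed with $S_1\cup S_2=I$. If $S_1=I$ then $\bF\equiv\bp_0$, so $\dot\bF\equiv\vzero$, which (since the $\nabla f_i$ are nonzero on $\mathcal{B}$) yields $\nabla_\bF L(\bp_0,\bG(t))=\vzero$; together with $\nabla_\bG L(\bp_0,\bG(t))=\vzero$ this makes $(\bp_0,\bG(t))$ a stationary point of the convex-concave $L$, i.e.\ an element of $\saddle(L)$ and an equilibrium of \Cref{eq:gda-transformed}, so by uniqueness of solutions the trajectory is this single point, which lies in $\saddle(L)\cap\mathcal{B}=Z$; the case $S_2=I$ is symmetric. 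If neither equals $I$, connectedness of $I$ forces $S_1\cap S_2\neq\emptyset$, so the trajectory passes through $(\bp_0,\bq_0)\in\saddle(L)$, hence is constant there and $S_1=I$, a contradiction. Therefore $M\subseteq Z$ (and in fact $M=Z$, since each point of $Z$ is an equilibrium with $\dot H=0$).

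Finally, for small $c>0$ the connected component $\Omega_c$ of $\{(\bF,\bG)\in\mathcal{B}:H(\bF,\bG)\le c\}$ containing $(\bp_0,\bq_0)$ is compact, contained in $\mathcal{B}$, and positively invariant: $H$ is continuous with a strict minimum at $(\bp_0,\bq_0)$, so sublevel components around it are bounded and bounded away from $\partial\mathcal{B}$, and since the vector field of \Cref{eq:gda-transformed} is $C^1$, solutions starting in the compact set $\Omega_c$ exist for all $t\ge 0$ and, by $\dot H\le 0$, cannot leave it. LaSalle's invariance principle on $\Omega_c$ then shows that every trajectory starting in $\Omega_c$ converges to the largest invariant set of $\{\dot H=0\}$ inside $\Omega_c$, namely $M\cap\Omega_c\subseteq Z$, hence it approaches $Z$. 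Running this construction at each $(\bp',\bq')\in Z$ — each of which is safe for the given initialization — yields compact positively invariant sets $\Omega^{(\bp',\bq')}$, and $\mathcal{R}=\bigcup_{(\bp',\bq')\in Z}\operatorname{int}\Omega^{(\bp',\bq')}$ is an open neighborhood of $Z$ from which every trajectory approaches $Z$, which is exactly local asymptotic stability of $Z$ for \Cref{eq:gda-transformed}. The main obstacle is the middle step: upgrading the pointwise dichotomy ``$\bF=\bp_0$ or $\bG=\bq_0$'' on $\{\dot H=0\}$ to ``the whole $M$-trajectory is a single equilibrium in $Z$'' via the invariance-plus-connectedness argument; verifying compactness and positive invariance of $\Omega_c$ when the $f_i,g_j$ are non-invertible and $\mathcal{B}$ is possibly unbounded is a routine secondary point.
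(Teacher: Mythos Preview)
Your proof is correct and follows the paper's overall strategy (the Lyapunov function $H$ of \Cref{lemma:lyapunov-function} plus LaSalle), but you work harder than necessary in the key step of identifying the largest invariant set $M\subseteq\{\dot H=0\}$.

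The difference is where you evaluate the strict-convex-concave dichotomy of \Cref{def:conv-conc}. You apply it at each running point $(\bF,\bG)\in M$, obtaining only the pointwise conclusion ``$\bF=\bp_0$ or $\bG=\bq_0$'', which then forces the connectedness argument with $S_1,S_2$ to upgrade this to a per-trajectory statement. The paper instead applies the dichotomy once at the \emph{reference} point $(\bp_0,\bq_0)$: either $L(\cdot,\bq_0)$ is strictly convex or $L(\bp_0,\cdot)$ is strictly concave, and this is a single global choice. Taking WLOG the first, the equality $\gamma'=0$ (i.e.\ $L(\bF,\bq_0)=L(\bp_0,\bq_0)$) already forces $\bF=\bp_0$ on the entire set $\{\dot H=0\}$, not just on $M$, so no connectedness argument is needed. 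After that the paper concludes as you do: on $M$ one has $\bF\equiv\bp_0$, so $\dot\bF\equiv 0$ gives $\nabla_\bF L(\bp_0,\bG)=0$, and the constancy of $L(\bp_0,\bG(t))$ (your $\gamma=0$) yields $\nabla_\bG L(\bp_0,\bG)=0$, hence $M\subseteq\saddle(L)\cap\mathcal{B}=Z$.

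Two minor remarks. Your derivation of $\nabla_\bG L(\bp_0,\bG)=0$ directly from ``$\bG$ maximizes the concave $L(\bp_0,\cdot)$'' is slightly cleaner than the paper's time-differentiation of $L(\bp_0,\bG(t))$. And your final step of taking the union $\bigcup_{(\bp',\bq')\in Z}\operatorname{int}\Omega^{(\bp',\bq')}$ to build a neighborhood of the whole set $Z$ is more explicit than the paper, which works around a single $(\bp,\bq)\in Z$; both are fine for the stated conclusion.
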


The above lemma however does not suffice to prove that for an arbitrary initialization $(\btheta(0), \bphi(0))$,
$(\bF(t), \bG(t) )$ approaches $Z$ as $t\to+\infty$. 
In other words, \emph{a-priori}  it is unclear if $(\bF(\btheta(0)), \bG(\bphi(0)))$ is necessarily inside the region of attraction (ROC) of $Z$. To get a refined estimate of the ROC of $Z$, we analyze the behavior of $H$ as $f_i$ and $g_j$ approach the boundaries of $\range{f_i}{\btheta_i(0)}$ and $\range{g_j}{\bphi_j(0)}$ and more precisely we show that the level sets of $H$ are bounded.
Once again the corresponding analysis is trivial for convex concave games, since the level sets are spheres around the equilibria.
\begin{theorem}\label{th:strict-convergence}
Let $L$ be strictly convex concave and $Z \subset \saddle(L)$ is the non empty set of equilbria of $L$ for which $(\btheta(0),\bphi(0))$ is safe. Under the dynamics of \cref{eq:gda}  $(\bF(\btheta(t)), \bG(\btheta(t)))$ converges to a point in $Z$ as $t \to \infty$.
\end{theorem}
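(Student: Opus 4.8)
The plan is to analyze the reduced dynamics \eqref{eq:gda-transformed} and transfer back at the end. By \Cref{theorem:reparametrization} the trajectory of $(\bF(\btheta(t)),\bG(\bphi(t)))$ coincides with the solution of \eqref{eq:gda-transformed} started at $(\bF(\btheta(0)),\bG(\bphi(0)))$, and by \Cref{lemma:reparametrization} it is confined for all $t\ge 0$ to the open box $D:=\prod_{i=1}^{N}\range{f_i}{\btheta_i(0)}\times\prod_{j=1}^{M}\range{g_j}{\bphi_j(0)}$. Fix any $(\bp,\bq)\in Z$, which exists by hypothesis, so that the initialization is safe for $(\bp,\bq)$ and the Lyapunov function $H=H_{\bp,\bq}$ of \eqref{eq:lyuapunov-hamiltonian} is well defined on $D$. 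Each summand of $H$ is nonnegative and vanishes only at $f_i=p_i$, $g_j=q_j$, so $H\ge 0$ on $D$ with $H=0$ exactly at $(\bp,\bq)$; by \Cref{lemma:lyapunov-function} $H$ is non-increasing along the trajectory, hence the trajectory stays in the sublevel set $S:=\{(\bF,\bG)\in D: H(\bF,\bG)\le H(\bF(\btheta(0)),\bG(\bphi(0)))\}$.

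The crux is that $S$ is compact, equivalently that $H$ is proper on $D$: its value tends to $+\infty$ whenever $(\bF,\bG)$ approaches the frontier of $D$, i.e.\ some coordinate, say $f_1$, tends to an endpoint $u$ of the open interval $\range{f_1}{\btheta_1(0)}$. Since all other summands of $H$ are nonnegative, it suffices to show $\int_{p_1}^{u}\frac{z-p_1}{\norm{\nabla f_1(X_{\btheta_1(0)}(z))}^2}\,\mathrm{d}z=+\infty$. Reparametrizing this integral by the time $\tau$ of the pure gradient-ascent flow $\dot\bx=\nabla f_1(\bx)$, $\bx(0)=\btheta_1(0)$, along which $\tfrac{\mathrm d}{\mathrm d\tau}f_1(\bx(\tau))=\norm{\nabla f_1(\bx(\tau))}^2$, converts it into $\int^{\tau_u}\!\big(f_1(\bx(\tau))-p_1\big)\,\mathrm d\tau$, where $\tau_u\in(0,+\infty]$ is the time at which $f_1(\bx(\tau))\to u$. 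Since $p_1$ lies in the interior of the range, $f_1(\bx(\tau))-p_1$ is bounded away from $0$ near $\tau_u$; and a gradient-flow trajectory of a $C^2$ function cannot converge to a point in finite time, so if $u$ is finite then $\tau_u=+\infty$ and the integral diverges. In the remaining case the flow escapes to infinity, and one checks (using that $f_1$ is $C^2$ on all of $\mathbb{R}^{n_1}$, which forces $f_1(\bx(\tau))\to+\infty$ whenever the escape is in finite flow-time) that the integral still diverges. This boundary analysis is the main technical obstacle. Granting it, $S$ is bounded, closed, and stays a positive distance from $\partial D$, hence compact and positively invariant.

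With $S$ compact positively invariant and $\dot H\le 0$, LaSalle's invariance principle applies: the $\omega$-limit set $\Omega$ of the trajectory is nonempty, compact, invariant, and contained in the largest invariant subset $\mathcal M$ of $\{(\bF,\bG)\in S:\dot H(\bF,\bG)=0\}$. Differentiating $H$ along \eqref{eq:gda-transformed} and cancelling the $\norm{\nabla f_i}^2,\norm{\nabla g_j}^2$ factors yields $\dot H=-\langle\bF-\bp,\nabla_{\bF}L\rangle+\langle\bG-\bq,\nabla_{\bG}L\rangle$, which via the (sub/super)gradient inequalities and the saddle property of $(\bp,\bq)$ is a sum of four nonpositive terms: the two Bregman terms of $L(\cdot,\bG)$ and $L(\bF,\cdot)$, plus $L(\bp,\bG)-L(\bp,\bq)\le 0$ and $L(\bp,\bq)-L(\bF,\bq)\le 0$. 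So $\dot H=0$ forces both Bregman terms to vanish, whence by \Cref{def:conv-conc} either $L(\cdot,\bG)$ is strictly convex, giving $\bF=\bp$, or $L(\bF,\cdot)$ is strictly concave, giving $\bG=\bq$. Propagating this dichotomy along a full trajectory in $\mathcal M$ — using that interior range points are regular values (so $\dot\bF=\vzero\Leftrightarrow\nabla_{\bF}L=\vzero$), and that $L(\bp,\bG(t))\equiv L(\bp,\bq)=\max_{\bG}L(\bp,\cdot)$ forces $\nabla_{\bG}L(\bp,\bG(t))=\vzero$, and symmetrically — a short continuity argument shows every point of $\mathcal M$ is a stationary point of $L$; being in $S\subseteq D$ it is also safe for the initialization, so $\Omega\subseteq Z$. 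This is the reasoning behind \Cref{lemma:las}, now run on the large sublevel set $S$ instead of only near $Z$.

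It remains to upgrade ``$(\bF(\btheta(t)),\bG(\bphi(t)))$ approaches $\Omega\subseteq Z$'' to convergence to a single point of $Z$. Pick any $(\bp^*,\bq^*)\in\Omega$: it is a von Neumann solution and the initialization is safe for it, so by \Cref{th:conv-conc-transformed} the point $(\bp^*,\bq^*)$ is Lyapunov-stable for \eqref{eq:gda-transformed}. Given any neighborhood $U$ of $(\bp^*,\bq^*)$, stability furnishes a neighborhood $V$ such that a trajectory entering $V$ never leaves $U$; since $(\bp^*,\bq^*)$ is an $\omega$-limit point the trajectory enters $V$ at some finite time and stays in $U$ forever, and as $U$ was arbitrary $(\bF(\btheta(t)),\bG(\bphi(t)))\to(\bp^*,\bq^*)\in Z$. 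The only genuinely delicate step is the properness of $H$ on $D$ in the second paragraph; everything else is a reduction (\Cref{theorem:reparametrization}), a reuse of \Cref{lemma:las}, or the stability upgrade via \Cref{th:conv-conc-transformed}.
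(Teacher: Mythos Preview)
Your proposal is correct and follows essentially the same route as the paper: build the initialization–dependent Lyapunov function $H$, show the sublevel set through the initial point is compact by proving $H\to+\infty$ at $\partial D$ (the paper does this via the auxiliary $a(f_i)=\int_{p_i}^{f_i}\|\nabla f_i\|^{-2}\,dz$ with $\dot a=1$, you via the equivalent time–reparametrization), invoke LaSalle to land in $Z$, and upgrade to pointwise convergence using the stability of each point of $Z$ (the paper cites this as \Cref{th:pointwise}, you spell out the argument). The only notable presentational difference is that you identify $\mathcal{M}$ via the pointwise dichotomy coming from \Cref{def:conv-conc} at $(\bF,\bG)$, whereas the paper applies strictness once at the fixed $(\bp,\bq)$ to get $\bF=\bp$ on all of $E$ directly, which avoids your ``short continuity argument.''
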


The theorem above guarantees convergence to a von Neumann solution for all initializations that are safe for at least one element of \saddle($L$). However, this is not the same as global asymptotic stability. To get even stronger guarantees, we can assume that all initializations are safe. In this case it is straightforward to get a global asymptotic stability result:

\begin{corollary} \label{cor:global-conv}
Let $L$ be strictly convex concave and assume that all intitializations are safe for at least one element of \saddle(L). The following set is globally asymptotically stable for continuous GDA dynamics.
\begin{equation*}
    \{ (\btheta^*, \bphi^*) \in \mathbb{R}^n \times \mathbb{R}^m:  (F(\btheta^*), G(\bphi^*)) \in \saddle(L) \}
\end{equation*}
\end{corollary}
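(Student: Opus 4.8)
The plan is to read off \Cref{cor:global-conv} from \Cref{th:strict-convergence}, the only real work being to transport the convergence statement from the output space $X\times Y$ back to the parameter space. First note that, per the definition in \Cref{sec:preliminaries}, calling the set $S:=\{(\btheta,\bphi):(\bF(\btheta),\bG(\bphi))\in\saddle(L)\}$ globally asymptotically stable requires only that \emph{every} trajectory approaches $S$; there is no separate Lyapunov-stability clause to check. So I would fix an arbitrary initialization $(\btheta(0),\bphi(0))$ in the whole parameter space. By the standing hypothesis of the corollary it is safe for at least one element of $\saddle(L)$, hence the set $Z\subseteq\saddle(L)$ of equilibria for which it is safe is non-empty, and \Cref{th:strict-convergence} applies verbatim: along \cref{eq:gda} the pair $(\bF(\btheta(t)),\bG(\bphi(t)))$ converges to some $(\bp,\bq)\in Z\subseteq\saddle(L)$. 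Componentwise, $f_i(t)=f_i(\btheta_i(t))\to p_i$ for every $i$ and $g_j(t)=g_j(\bphi_j(t))\to q_j$ for every $j$.

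Next I would invoke \Cref{theorem:reparametrization}, which supplies $C^1$ functions $X_{\btheta_i(0)}$ and $X_{\bphi_j(0)}$, defined on $\range{f_i}{\btheta_i(0)}$ and $\range{g_j}{\bphi_j(0)}$ respectively, with $\btheta_i(t)=X_{\btheta_i(0)}(f_i(t))$ and $\bphi_j(t)=X_{\bphi_j(0)}(g_j(t))$. Safety for $(\bp,\bq)$ (\Cref{definition:safe-conditions}) gives $p_i\in\range{f_i}{\btheta_i(0)}$ and $q_j\in\range{g_j}{\bphi_j(0)}$, so $X_{\btheta_i(0)}$ and $X_{\bphi_j(0)}$ are defined and continuous at $p_i$ and $q_j$; since moreover $f_i(t)\to p_i$ and $g_j(t)\to q_j$, the arguments eventually lie in a subinterval of the range on which these maps are continuous. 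Continuity then yields $\btheta_i(t)\to\btheta_i^{*}:=X_{\btheta_i(0)}(p_i)$ and $\bphi_j(t)\to\bphi_j^{*}:=X_{\bphi_j(0)}(q_j)$. Stacking these limits into $(\btheta^{*},\bphi^{*})$ and using continuity of the $f_i,g_j$ gives $\bF(\btheta^{*})=\bp$ and $\bG(\bphi^{*})=\bq$, so $(\bF(\btheta^{*}),\bG(\bphi^{*}))=(\bp,\bq)\in\saddle(L)$, i.e.\ $(\btheta^{*},\bphi^{*})\in S$. Hence the entire trajectory converges to a point of $S$, in particular $\operatorname{dist}\big((\btheta(t),\bphi(t)),S\big)\to 0$; since the initialization was arbitrary, every trajectory of continuous GDA approaches $S$, which is exactly the asserted global asymptotic stability.

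The delicate point I expect is the lifting in the second paragraph: output convergence alone does not force the parameters to converge, because a coordinate function $f_i$ may attain the value $p_i$ on several widely separated branches, and a priori the trajectory could drift along one such branch while $f_i(t)$ still tends to $p_i$. What rules this out is precisely the one-dimensional reparametrization of \Cref{lemma:reparametrization}/\Cref{theorem:reparametrization}: along a GDA trajectory the scalar $f_i(t)$ pins down $\btheta_i(t)$ uniquely and $C^1$-smoothly through $X_{\btheta_i(0)}$, so a limit of $f_i(t)$ produces a genuine limit of $\btheta_i(t)$. Everything else is bookkeeping on top of \Cref{th:strict-convergence}. (If one instead wanted the stronger textbook notion of asymptotic stability that also includes Lyapunov stability of $S$, one would additionally feed the hidden-stability estimate of \Cref{th:hidden_meta_stability} together with \Cref{lemma:las} into the same argument; but the paper's definition only requires the attraction property established above.)
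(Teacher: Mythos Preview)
Your proposal is correct and matches the paper's intent: the paper gives no explicit proof for \Cref{cor:global-conv}, treating it as a straightforward consequence of \Cref{th:strict-convergence}, and your argument does exactly that. You have in fact supplied more than the paper does, correctly identifying and handling the one nontrivial step---lifting output-space convergence $(\bF(\btheta(t)),\bG(\bphi(t)))\to(\bp,\bq)$ to parameter-space convergence via the $C^1$ reparametrization maps $X_{\btheta_i(0)},X_{\bphi_j(0)}$ of \Cref{theorem:reparametrization}; this step is needed because output convergence alone does not a priori force the parameters to approach $S$, and your use of safety to ensure $p_i,q_j$ lie in the (open) domains of these maps is exactly right.
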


Notice that the above approach on global asymptotic convergence using Lyapunov arguments
can be extended to other popular alternative gradient-based heuristics like variations of Hamiltonian Gradient descent.
For concision, we defer the exact statements, proofs in \Cref{sec:appendix:hamiltonian}

\subsection{Convergence via regularization}
\label{sec:regularization}
Regularization is a key technique that works both in the practice of GANs \cite{MeschederGN18,KurachLZMG19} and in the theory of convex concave games \cite{imperfect-information,RothLNH17,SanjabiBRL18}. Our settings of hidden convex concave games allows for provable guarantees for regularization in a wide class of settings, bringing closer practical and theoretical guarantees. Let us have a utility $L(\bx, \by)$ that is convex concave but not strictly. Here we will propose a modified utility $L'$ that is strictly convex strictly concave. Specifically we will choose
\begin{equation*}
    L'(\bx, \by) = L(\bx, \by) + \frac{\lambda}{2} \norm{\bx}^2 - \frac{\lambda}{2} \norm{\by}^2
\end{equation*}
The choice of the parameter $\lambda$ captures the trade-off between convergence to the original equilibrium of $L$ and convergence speed. On the one hand, invoking the implicit function theorem, we get that for small $\lambda$ the equilibria of $L$ are not significantly perturbed.
\begin{theorem}\label{th:regularization:main}
If $L$ is a convex concave function with invertible Hessians at all its equilibria, then for each $\epsilon >0$ there is a $\lambda>0$ such that $L'$ has equilibria that are $\epsilon$-close to the ones of $L$.  
\end{theorem}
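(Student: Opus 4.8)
The plan is to identify the equilibria of the regularized game with the zeros of a perturbed gradient map and then continue a given equilibrium of $L$ to a nearby zero of that map by the implicit function theorem. First I would record two reductions. Since $L$ is convex concave, $\saddle(L)$ is exactly the set of stationary points of $L$, i.e.\ the zeros of $\nabla L = (\nabla_\bx L, \nabla_\by L)$ --- the fact recalled just after the definition of HCC games. For $\lambda>0$ the regularized utility $L'(\bx,\by) = L(\bx,\by) + \tfrac\lambda2\norm{\bx}^2 - \tfrac\lambda2\norm{\by}^2$ is strictly convex in $\bx$ and strictly concave in $\by$; in particular it is still convex concave, so by the same fact its equilibria are precisely the zeros of
\begin{equation*}
 \nabla L'(\bx,\by) = \bigl(\nabla_\bx L(\bx,\by) + \lambda\bx,\ \nabla_\by L(\bx,\by) - \lambda\by\bigr).
\end{equation*}
Hence it suffices to show: for every $(\bx^*,\by^*) \in \saddle(L)$ and every $\epsilon > 0$ there is a $\lambda>0$ and a zero $(\bx_\lambda,\by_\lambda)$ of $\nabla L'$ with $\norm{(\bx_\lambda,\by_\lambda) - (\bx^*,\by^*)} < \epsilon$.

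Next I would fix $(\bx^*,\by^*)\in\saddle(L)$ and define $\Phi(\bx,\by,\lambda) = \bigl(\nabla_\bx L(\bx,\by) + \lambda\bx,\ \nabla_\by L(\bx,\by) - \lambda\by\bigr)$. Because $L\in C^2$, $\Phi$ is $C^1$, and $\Phi(\bx^*,\by^*,0) = \bzero$. Its Jacobian in the variables $(\bx,\by)$ at this point is
\begin{equation*}
 D_{(\bx,\by)}\Phi(\bx^*,\by^*,0) = \begin{pmatrix} \nabla^2_{\bx\bx}L & \nabla^2_{\bx\by}L \\ \nabla^2_{\by\bx}L & \nabla^2_{\by\by}L\end{pmatrix}(\bx^*,\by^*),
\end{equation*}
which is the Hessian of $L$ at the equilibrium and is invertible by hypothesis (this matrix differs from the Jacobian of the GDA vector field only by a sign on the lower block rows, so whichever convention one adopts for ``the Hessian'' the assumption is the same). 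The implicit function theorem then produces $\lambda_0 > 0$ and a $C^1$ curve $\lambda\mapsto(\bx_\lambda,\by_\lambda)$ on $(-\lambda_0,\lambda_0)$ with $(\bx_0,\by_0) = (\bx^*,\by^*)$ and $\Phi(\bx_\lambda,\by_\lambda,\lambda) = \bzero$ throughout. For $\lambda\in(0,\lambda_0)$ this is a zero of $\nabla L'$, hence --- by the first paragraph --- an equilibrium of $L'$; continuity of the curve at $\lambda = 0$ lets us pick $\lambda$ small enough that $\norm{(\bx_\lambda,\by_\lambda) - (\bx^*,\by^*)} < \epsilon$, completing the reduction.

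The one genuinely non-mechanical point, and the step I expect to be the main obstacle, is the translation in the first paragraph: the implicit function theorem only returns a \emph{stationary} point of $L'$, and one must argue this is actually a von Neumann equilibrium, which is precisely where convex concavity of $L'$ (inherited from $L$) does the work. I would also remark that invertibility of the Hessian at $(\bx^*,\by^*)$ makes $\nabla L$ a local diffeomorphism there, so $(\bx^*,\by^*)$ is an isolated stationary point of $L$; combined with the standard fact that the saddle set of a convex concave function is a convex product set, this forces $\saddle(L) = \{(\bx^*,\by^*)\}$, so the hypothesis quietly makes the equilibrium of $L$ unique and the statement really concerns a single point. Everything else --- $C^1$ regularity of $\Phi$, the block form of $D_{(\bx,\by)}\Phi$, the continuity estimate --- is routine.
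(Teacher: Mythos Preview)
Your proposal is correct and follows essentially the same approach as the paper: both reduce equilibria to zeros of the gradient (KKT) map, set up the parametrized system $\Phi(\bx,\by,\lambda)=\bzero$, observe that the Jacobian in $(\bx,\by)$ at $\lambda=0$ is the Hessian of $L$, and apply the implicit function theorem plus continuity of the resulting branch. Your additional remarks---that convex concavity of $L'$ is what upgrades stationarity to being a von Neumann equilibrium, and that the invertible-Hessian hypothesis forces $\saddle(L)$ to be a singleton---make explicit points the paper only states in passing.
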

Note that invertibility of the Hessian means that $L$ must have a unique equilibrium. On the other hand increasing $\lambda$ increases the rate of convergence of safe initializations to the perturbed equilibrium
\begin{theorem}\label{th:regularization:rate}
Let $(\btheta(0), \bphi(0))$ be a safe initialization for the unique equilibrium of $L'$ $(\bp, \bq)$. If 
\begin{equation*}
 r(t) = \norm{\bF(\btheta(t)) - \bp}^2 + \norm{\bG(\bphi(t)) - \bq}^2   
\end{equation*}
then there are initialization dependent constants $c_0, c_1 > 0$ such that $r(t) \leq c_0 \exp(-\lambda c_1 t)$. 
\end{theorem}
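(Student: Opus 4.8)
The whole argument is carried by the Lyapunov function $H$ of \cref{eq:lyuapunov-hamiltonian}, instantiated with the regularized utility $L'$ in place of $L$ and with $(\bp,\bq)$ as its unique equilibrium. Because $L'$ is $\lambda$-strongly convex in $\bx$ and $\lambda$-strongly concave in $\by$, I expect $H$ to decay not merely monotonically but at a rate proportional to $r(t)$; a two-sided comparison between $H$ and $r$ then upgrades this to exponential decay via Gr\"onwall's inequality.

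First I would differentiate $H$ along the reparametrized dynamics \cref{eq:gda-transformed} written for $L'$. Writing $w_i(z)=\norm{\nabla f_i(X_{\btheta_i(0)}(z))}^2$ and, analogously, $w^g_j(z)=\norm{\nabla g_j(X_{\bphi_j(0)}(z))}^2$, those dynamics read $\dot f_i=-w_i(f_i)\frac{\partial L'}{\partial f_i}(\bF,\bG)$ and $\dot g_j=w^g_j(g_j)\frac{\partial L'}{\partial g_j}(\bF,\bG)$, so the weights cancel against the $1/w$ factors inside $H$ and
\[
\dot H=-(\bF-\bp)^{\top}\nabla_{\bx}L'(\bF,\bG)+(\bG-\bq)^{\top}\nabla_{\by}L'(\bF,\bG).
\]
Now apply $\lambda$-strong convexity of $L'(\cdot,\bG)$ (comparing $\bF$ to $\bp$) and $\lambda$-strong concavity of $L'(\bF,\cdot)$ (comparing $\bG$ to $\bq$); adding the two inequalities and dropping the quantity $L'(\bF,\bq)-L'(\bp,\bG)$, which is nonnegative by the saddle inequalities $L'(\bp,\bG)\le L'(\bp,\bq)\le L'(\bF,\bq)$, yields $\dot H\le-\tfrac{\lambda}{2}\,r(t)$.

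Next I would confine the trajectory so that $H$ and $r$ become comparable. Since $\dot H\le0$, the orbit stays in $\{H\le H(0)\}$; invoking the boundedness of the level sets of $H$ established in the proof of \cref{th:strict-convergence}, together with safety (which forces each $f_i(t)$ to remain in the open interval $\range{f_i}{\btheta_i(0)}$ delimited by stationary values of $f_i$, and likewise $g_j(t)$), the values $f_i(t)$ and $g_j(t)$ stay, for all $t\ge0$, in compact subintervals bounded away from those endpoints. On those subintervals the continuous maps $w_i,w^g_j$ are strictly positive, hence bounded between some $0<m\le M<\infty$; plugging these bounds into the integrals defining $H$ gives $\tfrac1{2M}\,r(t)\le H(t)\le\tfrac1{2m}\,r(t)$. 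Combined with the previous step this gives $\dot H\le-\tfrac{\lambda}{2}r(t)\le-\lambda m\,H(t)$, so Gr\"onwall yields $H(t)\le H(0)e^{-\lambda m t}$ and hence $r(t)\le 2M\,H(t)\le\tfrac{M}{m}\,r(0)\,e^{-\lambda m t}$. Setting $c_0=\tfrac{M}{m}\,r(0)$ and $c_1=m$ completes the proof; both depend only on the initialization, through $r(0)$ and through the compact region (which is fixed once $H(0)$ is).

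The main obstacle is this confinement step: one must show the orbit stays in a compact region strictly interior to $\prod_i\range{f_i}{\btheta_i(0)}\times\prod_j\range{g_j}{\bphi_j(0)}$, so that the reparametrization weights are uniformly bounded both above and away from zero along the trajectory. This is precisely the level-set-boundedness analysis developed for \cref{th:strict-convergence}, and it is not automatic: in the degenerate hidden bilinear case $H$ is conserved and its level sets are bounded away from the equilibrium, illustrating how badly the comparison can fail without it. Granting that ingredient, the cancellation in $\dot H$, the strong-convexity estimate, and the Gr\"onwall step are all routine.
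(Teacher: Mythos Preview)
Your proposal is correct and follows essentially the same approach as the paper's proof: you use the same Lyapunov function $H$, derive $\dot H\le-\tfrac{\lambda}{2}r(t)$ from the $\lambda$-strong convexity/concavity of $L'$ together with the saddle inequalities, invoke the level-set boundedness argument from \cref{th:strict-convergence} to confine the trajectory and obtain uniform two-sided bounds $0<m\le w_i,w^g_j\le M$, and then close with the sandwich $\tfrac{1}{2M}r\le H\le\tfrac{1}{2m}r$ and Gr\"onwall. The constants you produce, $c_0=\tfrac{M}{m}r(0)$ and $c_1=m$, correspond to the paper's $2KH_0$ and $\kappa$.
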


\section{Applications} \label{sec:insights}
In this section we show how our theorems provide connections between the framework of hidden games and practical applications
of min-max optimization like training GANs.


\textbf{Hidden strictly convex-concave games}.
We will start our discussion with the fundamental generative architecture of \cite{goodfellow2014generative}'s GAN.
In the \emph{vanilla GAN} architecture, as it is commonly referred, our goal is to find a generator distribution $p_{\textrm{G}}$ that is close to an input data distribution $p_{\textrm{data}}$.
To find such a generator function, we can use a discriminator $D$ that ``criticizes'' the deviations of the generator from the input data distribution. 
For the case of a discrete $p_{\textrm{data}}$ over a set $\mathcal{N}$, the minimax problem of \cite{goodfellow2014generative} is the following:
\begin{equation*}
    \min_{\substack{p_{\textrm{G}}(x) \geq 0,\\ \sum_{x \in \mathcal{N}} p_{\textrm{G}}(x) =1}}\max_{D\in (0,1)^{|\mathcal{N}|}}V(G, D) = \sum_{x \in \mathcal{N}} p_{\textrm{data}}(x) \log(D(x)) + \sum_{x \in \mathcal{N}} p_{\textrm{G}}(x) \log(1-D(x))
\end{equation*}
The problem above can be formulated as a constrained strictly convex-concave hidden game. On the one hand, for a fixed discriminator $D^*$, the $V(G, D^*)$ is linear over the $p_{\textrm{G}}(x)$.  On the other hand, for a fixed generator $G^*$, $V(G^*, D)$ is strongly-concave. We can implement the inequality constraints on both the generator probabilities and discriminator using sigmoid activations. For the equality constraint $\sum_{x \in \mathcal{N}} p_{\textrm{G}}(x) = 1$ we can introduce a Langrange multiplier. Having effectively removed the constraints, we can see in \cref{fig:GoodfellowGan}, the dynamics of \cref{eq:gda} converge to the unique equilibrium of the game, an outcome consistent with our results in \cref{cor:global-conv}. It is worth noting that while the Euclidean distance to the equilibrium is not monotonically decreasing, $H(t)$ is.

\begin{figure}[h!]
    \centering
    \includegraphics[width=\textwidth]{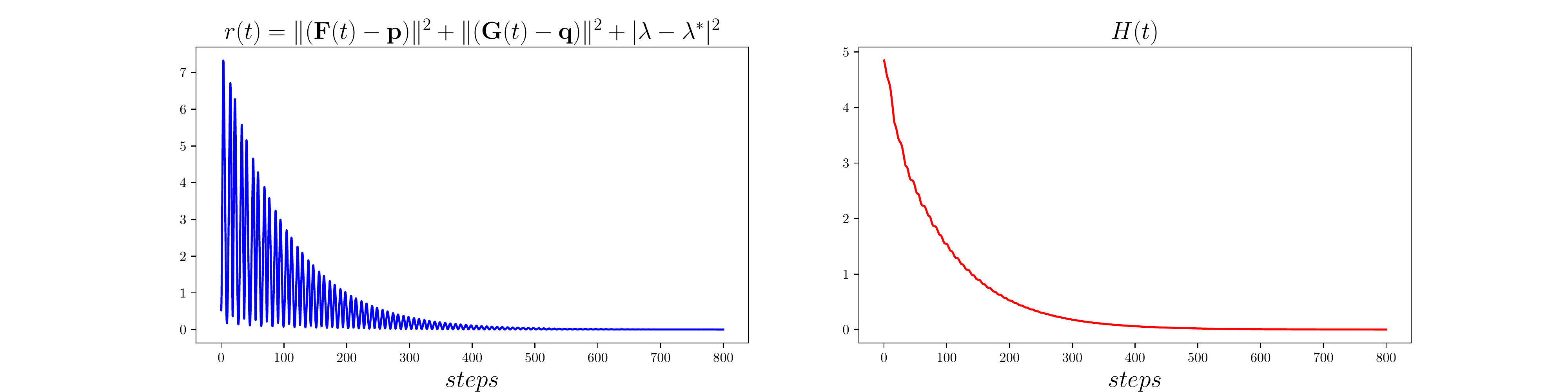}
    \caption{ Comparison of $\ell_2$ distance from the equilibrium and the Lyapunov function. Both of them converge to zero  as we state but $\ell_2$ distance not monotonically to zero. 
    For $p_{\textrm{data}}$ we choose a fully mixed distribution of dimension $d=4$. Given the sigmoid activations all the initializations are safe. We defer the detailed proof of convergence in \cref{sec:constrained-hidden}.
    }
    \label{fig:GoodfellowGan}
\end{figure}

\textbf{Hidden convex-concave games \& Regularizaiton}.
An even more interesting case is  Wassertein GANs--WGANs (\cite{arjovsky2017wasserstein}). One of the contributions of
\cite{lei2019sgd} is to show that WGANs trained with Stochastic GDA can learn the parameters of Gaussian distributions whose samples are transformed by non-linear activation functions. It is worth mentioning that the original WGAN formulation has a Lipschitz constraint in the discriminator function. For simplicity, \cite{lei2019sgd} replaced this constraint with a quadratic regularizer. The min-max problem for the case of one-dimensional Gaussian $\mathcal{N}(0,\alpha_*^2)$ and linear discriminator $D_{v}(x)= v^\top x$ with $x^2$ activation is:
\begin{align*}
    \min_{\alpha\in\mathbb{R}}\max_{v\in\mathbb{R}}V_{\textrm{WGAN}}(G_{\alpha}, D_{v}) &= \mathbb{E}_{\mathbf{X}\sim p_{\textrm{data}}}[D(\mathbf{X})]-\mathbb{E}_{\mathbf{X}\sim p_{\textrm{G}}}[D(\mathbf{X})]-v^2/2
    \\
    &=\mathbb{E}_{x\sim \mathcal{N}(0,\alpha_*^2)^2}[v x] -\mathbb{E}_{x\sim  \mathcal{N}(0,\alpha_*^2)^2}[v x] -v^2/2
    \\
    &=(\alpha_*^2-\alpha^2)v-v^2/2
\end{align*}
Observe that $V_{\textrm{WGAN}}$ is not convex-concave but it can posed as a hidden strictly convex-concave game
with $\bG(\alpha)=(\alpha_*^2-\alpha^2)$ and $\bF(v)=v$. When computing expectations analytically without sampling, \cref{th:strict-convergence} guarantees convergence. In contrast, without the regularizer $V_{\textrm{WGAN}}$ can be modeled as a hidden linear-linear game and thus GDA dynamics cycle. Empirically, these results are robust to discrete and stochastic updates using sampling as shown in \Cref{fig:WGAN}. Therefore regularization in the work of \cite{lei2019sgd} was a vital ingredient in their proof strategy and not just an implementation detail. In \cref{sec:zerosum}, we also discuss applications of regularization to normal form zero sum games.

\begin{figure}[t]
\centering
\begin{minipage}[b]{0.48\linewidth}
\begin{center}
\includegraphics[width=.85\textwidth]{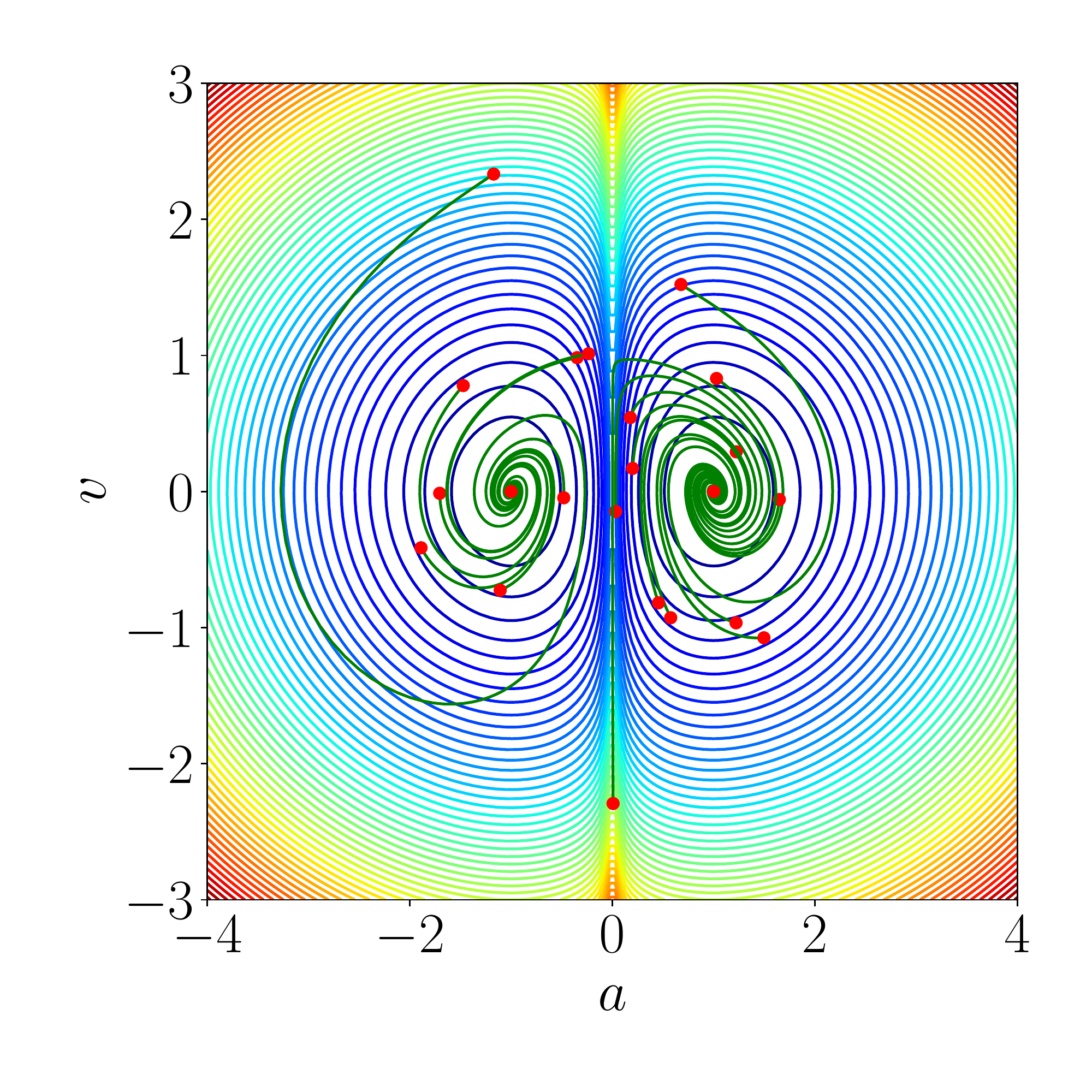}
\end{center}
\end{minipage}
\quad
\begin{minipage}[b]{0.48\linewidth}
\begin{center}    
\includegraphics[width=1.2\textwidth]{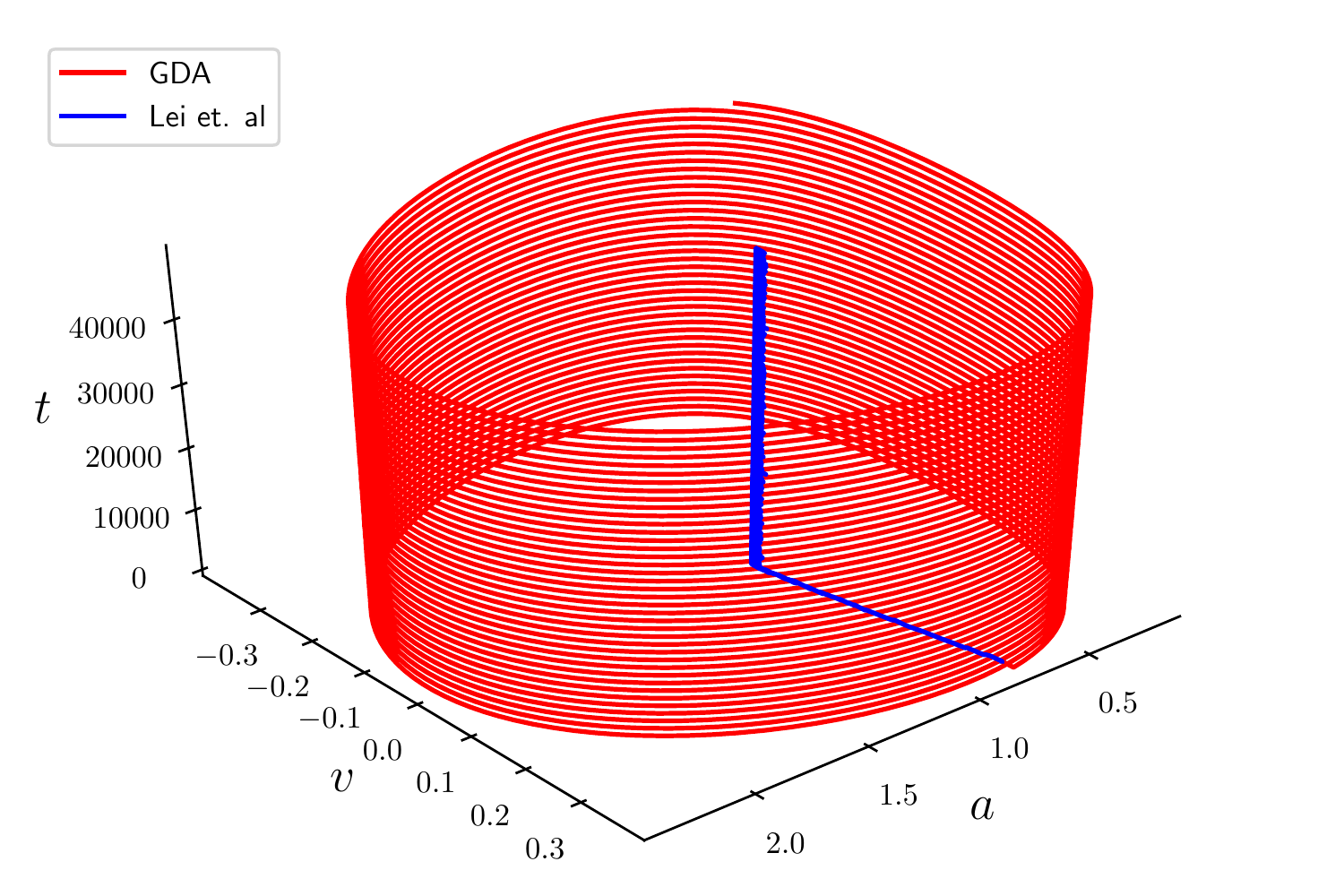}
\vspace{-0.25cm}
\end{center}
\end{minipage}
\caption{On the left, we show the trajectories of regularized GDA for $\alpha_*^2=1$ as well as the level sets of \cref{eq:lyuapunov-hamiltonian}. All trajectories converge to one of the two equilibria $(0,1)$ and $(0,-1)$ whereas without regularization, GDA would cycle on the level sets. In the right figure, we replace the exact expectations in $V_{\textrm{WGAN}}$ with approximations via sampling and continuous time updates on $\alpha$ and $v$ with discrete ones. For small learning rates and large sample sizes, unregularized GDA continues to cycle. In contrast, the regularization approach of \cite{lei2019sgd} converges to the $(0,1)$ equilibrium.}
\label{fig:WGAN}
\end{figure}

The two applications of HCC games in GANs are not isolated findings but instances of a broader pattern that connects HCC games and standard GAN formulations. As noted by \cite{DBLP:journals/corr/Goodfellow17}, if updates in GAN applications were directly performed in the ``functional space'', i.e. the generator and discriminator outputs, then standard arguments from convex concave optimization would imply convergence to global Nash equilibria. Indeed, standard GAN formulations like the vanilla GAN \cite{goodfellow2014generative}, f-GAN (\cite{fgan}) and WGAN \cite{arjovsky2017wasserstein} can all be thought of as convex concave games in the space of generator and discriminator outputs. Given that the connections between convex concave games and standard GAN objectives in the output space is missing from recent literature, in \cref{sec:ganconnections} we show how one can apply Von Neumann's minimax theorem to derive the optimal generators and discriminators even in the non-realizable case. In practice, the updates happen in the parameter space and thus convexity arguments no longer apply. Our study of HCC games is a stepping stone towards bridging the gap in convergence guarantees between the case of direct updates in the output space and the parameter space. 

\section{Discussion}\label{sec:conclusion}

In this work,  we introduce a class of non-convex non-concave games that we call hidden convex concave (HCC) games. In this class of games, 
the  competition on the output/operator space has a convex concave structure but training happens in the input/parameter space,  where the mappings between input and output space are smooth but non-convex non-concave functions. The main inspiration for this class is the indirect competition of the parameters of generator and discriminator on GANs' architectures. Our analysis combines ideas from game theory, dynamical systems and control theory such Lyapunov functions and LaSalle's theorem. Our convergence results favor not arbitrary local Nash equilibria, but only von Neumann solutions. 
To the best of our knowledge, such last iterate convergence results are the first result of their kind. Given the modular structure of our model and proofs, HCC games show particular promise as a theoretical testbed for studying which dynamics are more well suited to which GANs.  We believe that further positive results of this kind for  different combinations of GAN formulations and learning algorithms are possible by properly adapting our current techniques.

\def\app{1}
\begin{appendices}
\clearpage
\section{Background}
\subsection{Background in dynamical systems}
\begin{tcolorbox}
Our analysis combines tools from dynamical systems, stability analysis and invariance principles theory.
We start with the definitions of the different stability notions. We remind the well known Lyapunov's Lyapunov stability criterion  (\textbf{\Cref{th:lyapunov}})
Stability analysis in convex concave games is further complicated due to the possibility of non-isolated fixed points.
To tackle this issue, we recall Krasovskii-LaSalle’s Invariance Principle (\textbf{\Cref{th:lasalle}}),  a powerful result that has several implications 
for the asymptotic stability of a set in an autonomous (possibly nonlinear) dynamical system. In the special case where the goal set contains only stable fixed points a pointwise convergence theorem can be derived (\textbf{\Cref{th:pointwise}}).
Finally, we remind the notions of diffeomorphism and topological conjugacy of two dynamical systems, which are useful to transfer behavioral claims between equivalent dynamics. 
\end{tcolorbox}

Let $\bfun: D  \to \mathbb{R}^n$ be a locally Lipschitz map from a domain $D \subset \mathbb{R}^n$ to $\mathbb{R}^n$. We consider dynamical systems of the form
\begin{equation}\label{eq:dynamical}
    \dot{\bx} = \bfun(\bx)\tag{$\star$}
\end{equation}
A point $\bar{\bx}$ for which $\bfun(\bar{\bx}) = \bzero$ is called a fixed point. We will be interested in the following notions of stability for the fixed point points of \cref{eq:dynamical}.
\begin{definition}[Stability properties, {\cite[Definition 4.1]{khalil}}]
The fixed point $\bx= \bzero$ of \cref{eq:dynamical} is
\begin{itemize}
    \item stable if, for each $\epsilon>0$, there is a $\delta = \delta(\epsilon) >0$ such that
    \begin{equation*}
        \norm{\bx(0)} < \delta \implies \norm{\bx(t)} < \epsilon \quad \forall t \geq 0  
    \end{equation*}
    \item unstable if it is not stable
    \item asymptotically stable if it is stable and $\delta$ can be chosen such that
    \begin{equation*}
        \norm{\bx(0)} < \delta \implies \lim_{t \to \infty} \bx(t) = \bzero  
    \end{equation*}
\end{itemize}
\end{definition}

The Lyapunov Theorem will be a useful tool to prove (asymptotic) stability of a fixed point.
\begin{theorem}[Lyapunov Theorem, {\cite[Theorem 4.1]{khalil}}]\label{th:lyapunov}
Let $\bx=\bzero$ be a fixed point point for \cref{eq:dynamical} and $D \subset \mathbb{R}^n$ be a domain containing $\bx = \bzero$. Let $V: D \to \mathbb{R}$ be a continuously differentiable function such that
\begin{align*}
    V(\bzero) = 0 \text { and } &V(\bx) > 0 \text{ in } D-\{\bzero\}\\
    \dot{V}(\bx) &\leq 0 \text{ in }  D
\end{align*}
then $\bx=\bzero$ is stable. Moreover if
\begin{align*}
    \dot{V}(\bx) &< 0 \text{ in }  D-\{\bzero\}
\end{align*}
then $\bx=\bzero$ is asymptotically stable.
\end{theorem}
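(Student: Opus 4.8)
The plan is to give the classical sublevel-set argument, following \cite[Theorem 4.1]{khalil}. \textbf{Stability.} Fix $\epsilon>0$ small enough that the closed ball $\bar B_\epsilon=\{\bx:\norm{\bx}\le\epsilon\}$ lies in $D$ (possible since $D$ is open and contains $\bzero$). By continuity of $V$ and positive definiteness, $\alpha:=\min_{\norm{\bx}=\epsilon}V(\bx)$ is attained on the compact sphere and is strictly positive; choose any $\beta\in(0,\alpha)$ and put $\Omega_\beta:=\{\bx\in\bar B_\epsilon:V(\bx)\le\beta\}$. Two facts drive the argument: first, $\Omega_\beta$ is contained in the open ball $B_\epsilon$, since $V\ge\alpha>\beta$ on $\norm{\bx}=\epsilon$; second, $\Omega_\beta$ is positively invariant, because $\dot V\le0$ along solutions makes $t\mapsto V(\bx(t))$ nonincreasing, so any solution with $V(\bx(0))\le\beta$ keeps $V(\bx(t))\le\beta$ and never reaches the sphere $\norm{\bx}=\epsilon$. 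Consequently such a solution stays in the compact set $\Omega_\beta$ and is therefore defined for all $t\ge0$. Finally, continuity of $V$ at $\bzero$ with $V(\bzero)=0$ gives $\delta>0$ with $B_\delta\subseteq\Omega_\beta$, and then $\norm{\bx(0)}<\delta$ implies $\bx(t)\in\Omega_\beta\subseteq B_\epsilon$ for all $t\ge0$, which is exactly stability.

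\textbf{Asymptotic stability.} Assume now in addition $\dot V<0$ on $D\setminus\{\bzero\}$, and keep the $\delta,\epsilon$ above. For $\norm{\bx(0)}<\delta$ the solution remains in $\bar B_\epsilon$ and $V(\bx(t))$ decreases monotonically to a limit $c\ge0$. I would show $c=0$ by contradiction: if $c>0$, continuity of $V$ at $\bzero$ gives $d\in(0,\epsilon)$ with $V<c$ on $B_d$, so the trajectory is trapped in the compact annulus $A=\{d\le\norm{\bx}\le\epsilon\}$; there $\dot V$ attains a maximum $-\gamma<0$, hence $V(\bx(t))\le V(\bx(0))-\gamma t$, which is eventually negative, contradicting $V\ge0$ on $D$. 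Thus $V(\bx(t))\to0$. To conclude $\bx(t)\to\bzero$, suppose not: there is $r\in(0,\epsilon)$ and $t_n\to\infty$ with $\norm{\bx(t_n)}\ge r$; but $V$ has a strictly positive minimum on the compact annulus $\{r\le\norm{\bx}\le\epsilon\}$, contradicting $V(\bx(t_n))\to0$.

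The routine parts are pure bookkeeping; the two points I would be careful to spell out are (i) that the maximal solution cannot blow up or leave $D$ in finite time, which is precisely why the positive invariance of the \emph{compact} sublevel set $\Omega_\beta$ is the crucial device, and (ii) the two-directional comparison between the size of $V(\bx)$ and the size of $\norm{\bx}$ near $\bzero$, each direction of which uses compactness of a sphere or annulus together with positive definiteness of $V$. These are the only places where something could go wrong if handled carelessly; no other step presents a real obstacle.
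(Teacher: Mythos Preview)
The paper does not prove this theorem; it is stated without proof as a background result cited from \cite[Theorem 4.1]{khalil}. Your proposal is correct and is precisely the classical sublevel-set argument given in Khalil's textbook, so there is nothing to compare.
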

\noindent Unfortunately, the Lyapunov theorem is not very helpful when it comes to proving convergence in dynamical systems with non isolated fixed points. By definition, non-isolated fixed points cannot be asymptotically stable. Non isolated fixed points may give rise to more complex behaviour than point-wise convergence. 
\begin{definition}
We say that a trajectory $\bx(t)$ approaches a set $\mathcal{M}$ as $t \to \infty$ if for each $\epsilon>0$ there is a $T>0$ such that
\begin{equation*}
    \textrm{dist}(\bx(t), \mathcal{M}) < \epsilon, \quad \forall t > T 
\end{equation*}
where the operator ``$\textrm{dist}$'' is the minimum distance from a point to a set $\mathcal{M}$
\begin{equation*}
    \textrm{dist}(\bp, \mathcal{M})  = \inf_{\bx \in \mathcal{M}} \norm{\bp - \bx}
\end{equation*}
\end{definition}
\begin{definition} We say that a set $\mathcal{M}$ is invariant for \cref{eq:dynamical} if
\begin{equation*}
    \bx(0) \in \mathcal{M} \implies \bx(t) \in \mathcal{M}, \quad \forall t \in \mathbb{R}    
\end{equation*}
We will say $\mathcal{M}$ is positively invariant if the above holds for $t \geq 0$.
\end{definition}
We are ready to state LaSalle's Invariance Principle, a general theorem that can help us study the stability of non isolated fixed points.
\begin{theorem}[ LaSalle's Invariance Principle, {\cite[Theorem 4.4]{khalil}}]\label{th:lasalle}
Let $\Omega \subset D$ be a compact set that is positively invariant with respect to \cref{eq:dynamical}. Let $V: D \to \mathbb{R}$ be a continuously differentiable function such that $\dot{V}(\bx) \leq 0$ in $\Omega$. Let $E$ be the set of all points where $\dot{V}(\bx) = 0$. Let $\mathcal{M}$ be the largest invariant set in $E$. Then every solution starting in $\Omega$ approaches $\mathcal{M}$ as $t \to \infty$.
\end{theorem}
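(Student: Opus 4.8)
The plan is to run the classical argument built on the theory of positive limit sets. Fix an initial condition $\bx(0) \in \Omega$. Since $\Omega$ is positively invariant, the solution $\bx(t)$ stays in $\Omega$ for all $t \geq 0$, and since $\Omega$ is compact the orbit is bounded and the solution exists for all $t \geq 0$. Along it, $\frac{d}{dt}V(\bx(t)) = \nabla V(\bx(t)) \cdot \bfun(\bx(t)) = \dot V(\bx(t)) \leq 0$, so $t \mapsto V(\bx(t))$ is non-increasing; being continuous on the compact set $\Omega$ it is bounded below there, so $V(\bx(t))$ decreases to a finite limit $a$ as $t \to \infty$.

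Next I would invoke the standard structural properties of the positive limit set $\mathcal{P}$ of the trajectory, i.e. the set of points $p$ such that $\bx(t_n) \to p$ for some sequence $t_n \to \infty$. Because the orbit lies in the compact set $\Omega$, the set $\mathcal{P}$ is non-empty, compact, contained in $\Omega$, invariant under \cref{eq:dynamical}, and moreover $\bx(t)$ approaches $\mathcal{P}$ as $t \to \infty$. If this machinery is not taken as given, one derives non-emptiness and compactness from a Bolzano--Weierstrass argument on the bounded orbit, and invariance together with the attraction claim from continuous dependence of solutions on initial data (available since $\bfun$ is locally Lipschitz).

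With that in hand I would pin down $V$ on $\mathcal{P}$: for any $p \in \mathcal{P}$ pick $t_n \to \infty$ with $\bx(t_n) \to p$, so continuity of $V$ gives $V(p) = \lim_n V(\bx(t_n)) = a$; hence $V \equiv a$ on $\mathcal{P}$. Since $\mathcal{P}$ is invariant, a solution started in $\mathcal{P}$ stays in $\mathcal{P}$, so $V$ is constant along it and therefore $\dot V \equiv 0$ on $\mathcal{P}$, i.e. $\mathcal{P} \subseteq E$. As $\mathcal{P}$ is an invariant subset of $E$ and $\mathcal{M}$ is by definition the largest invariant set contained in $E$, we get $\mathcal{P} \subseteq \mathcal{M}$. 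Combining this with $\bx(t) \to \mathcal{P}$ yields $\bx(t) \to \mathcal{M}$ as $t \to \infty$, as claimed.

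The only genuinely delicate point is the black box used in the second paragraph — that the positive limit set is non-empty, compact, \emph{invariant}, and attracting. Non-emptiness and compactness are immediate from boundedness of the orbit inside $\Omega$; invariance is the subtle part, since it requires passing to the limit through the flow and hence continuous dependence on initial conditions. Everything else (monotone convergence of $V(\bx(t))$, constancy of $V$ on $\mathcal{P}$, and the maximality characterization of $\mathcal{M}$) is routine once that structural lemma is established.
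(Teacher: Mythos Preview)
Your argument is the standard, correct proof of LaSalle's Invariance Principle. However, there is nothing to compare against: the paper does not prove this theorem at all. It is stated in the background section as a cited result (Theorem 4.4 in Khalil, \emph{Nonlinear Systems}) and is then used as a black box in the proofs of \cref{lemma:las} and \cref{th:hamiltonian:main}. So your proposal supplies a proof where the paper simply imports the result from the literature.
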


LaSalle's theorem does not give us pointwise convergence directly. But in the special case that $\mathcal{M}$ contains only stable fixed points we can apply the following theorem
\begin{theorem}[Pointwise Convergence Theorem, {\cite[Proposition 5.4]{nontangency}}]\label{th:pointwise}
Let $\bx(t)$ be a trajectory of \cref{eq:dynamical}. If the positive limit sets of $\bx(t)$ contain a stable fixed point then $\bx(t)$ converges to it as $t \to \infty$. 
\end{theorem}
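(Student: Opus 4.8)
The plan is to combine Lyapunov stability of the distinguished fixed point with the time-translation invariance of the autonomous system \cref{eq:dynamical}. Write $\omega$ for the positive limit set of the trajectory $\bx(t)$ and let $\bx^*\in\omega$ be the stable fixed point supplied by the hypothesis. The first step is a bookkeeping observation: by the definition of the positive limit set there is a sequence of times $t_n\to\infty$ with $\bx(t_n)\to\bx^*$, and in particular the forward trajectory is defined on all of $[0,\infty)$ (otherwise no such sequence could exist).

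Next I would fix an arbitrary $\epsilon>0$ and apply stability of $\bx^*$ to obtain $\delta=\delta(\epsilon)>0$ such that every solution of \cref{eq:dynamical} starting within distance $\delta$ of $\bx^*$ remains within distance $\epsilon$ of $\bx^*$ for all subsequent time. Since $\bfun$ is locally Lipschitz the solution through any point is unique, and since $\bfun$ does not depend on $t$ this ``stay-close'' property is invariant under time translation: whenever $\norm{\bx(T)-\bx^*}<\delta$ for some $T\geq 0$, the tail $t\mapsto\bx(T+t)$ is itself a solution of \cref{eq:dynamical} starting within $\delta$ of $\bx^*$, hence $\norm{\bx(t)-\bx^*}<\epsilon$ for all $t\geq T$.

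The final step is to pick $n$ large enough that $\norm{\bx(t_n)-\bx^*}<\delta$, which is possible because $\bx(t_n)\to\bx^*$, and take $T=t_n$. We conclude $\norm{\bx(t)-\bx^*}<\epsilon$ for all $t\geq t_n$, so $\limsup_{t\to\infty}\norm{\bx(t)-\bx^*}\leq\epsilon$; letting $\epsilon\downarrow 0$ gives $\bx(t)\to\bx^*$ as $t\to\infty$ (and, as a byproduct, $\omega=\{\bx^*\}$).

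The argument is short, and the only point that genuinely needs care is the time-translation step: Lyapunov stability as formalized in the setup of \cref{th:lyapunov} is phrased with initial time $0$, and one must be explicit that autonomy of the vector field together with uniqueness of solutions lets it be re-centered at an arbitrary later time $T$. Everything else is just unwinding the definition of the positive limit set, so I do not anticipate any further obstacle.
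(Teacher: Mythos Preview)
Your argument is correct and is the standard proof of this classical fact. Note, however, that the paper does not supply its own proof of this statement: it is quoted as a background result from the literature (Proposition~5.4 of the cited reference) and used as a black box in the proof of \Cref{th:strict-convergence}. So there is no ``paper's proof'' to compare against; your write-up simply fills in what the paper leaves to the citation, and does so cleanly.
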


\begin{definition}[Differomorphism, \cite{Perko}]
Let $U, V$ be manifolds. A map $f : U \rightarrow V$ is called a diffeomorphism if $f$ carries $U$ onto $V$ and also both $f$ and $f^{-1}$ are smooth.
\end{definition}

\begin{definition}[Topological conjugacy, \cite{Perko}]
Two flows $\Phi_t : A \to A$ and $\Psi_t : B \to B$ are conjugate if there exists a homeomorphism $g : A \to B$ such that
\begin{equation*}
    \forall \pmb{x} \in A, t \in \mathbb{R}: g(\Phi_t(\pmb{x})) = \Psi_t(g(\pmb{x}))
\end{equation*}
Furthermore, two flows $\Phi_t: A \to A$ and 
$\Psi_t: B \to B$ are diffeomorphic if there exists a diffeomorphism $g : A \to B $ such that 
\begin{equation*}
    \forall \pmb{x} \in A, t \in \mathbb{R}:g(\Phi_t(\pmb{x})) = \Psi_t(g(\pmb{x})).
\end{equation*}
If two flows are diffeomorphic, then their vector fields are related by the derivative of the conjugacy. That is, we get precisely the same result
that we would have obtained if we simply transformed the coordinates in their differential equations.
\end{definition}
\clearpage
\subsection{Background in convex optimization}
\begin{tcolorbox}
For the sake of completeness, we recall here the definition of (strict) convex/concave function and its first order necessary and sufficient criterion.
We will also discuss strong convexity and its second order characterizations.
\end{tcolorbox}

We will be interested in notions from convex optimization throughout this work
\begin{definition}[{\cite[p. 67]{boyd}}]
Let $f: \mathbb{R}^n \to \mathbb{R}$ be a function then
\begin{itemize}
    \item $f$ is convex if
    \begin{equation*}
       \forall \bx, \by \in \mathbb{R}^n, t \in [0,1]: f(t\bx + (1-t)\by) \leq t f(\bx) + (1-t) f(\by) 
    \end{equation*}
    \item $f$ is strictly convex if 
    \begin{equation*}
        \forall \bx, \by \in \mathbb{R}^n, t \in (0,1): f(t\bx + (1-t)\by) < t f(\bx) + (1-t) f(\by)
    \end{equation*}
    \item $f$ is (strictly) concave if $-f$ is (strictly) convex.
\end{itemize}
\end{definition}
We will also use the first order characterizations of convex and concave functions
\begin{theorem}[{\cite[p. 69-70]{boyd}}]\label{th:first-order-conv}
Let $f: \mathbb{R}^n \to \mathbb{R}$ be a differentiable function.
\begin{itemize}
    \item $f$ is convex if and only if $\forall \bx, \by \in \mathbb{R}^n : f(\by) \geq f(\bx) + \nabla f(\bx)^T(\by -\bx)$
    \item $f$ is concave if and only if $\forall \bx, \by \in \mathbb{R}^n : f(\by) \leq f(\bx) + \nabla f(\bx)^T(\by -\bx)$
\end{itemize}
\end{theorem}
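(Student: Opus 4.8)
The plan is to prove each biconditional by its two standard directions, and to treat only the convex case; the concave statement then follows by applying the convex result to $-f$.

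\textbf{Forward direction (convexity $\Rightarrow$ gradient inequality).} I would fix $\bx, \by \in \R^n$ and restrict attention to the segment between them. For $t \in (0,1]$, convexity gives $f\big(\bx + t(\by-\bx)\big) = f\big((1-t)\bx + t\by\big) \le (1-t)f(\bx) + t f(\by)$. Subtracting $f(\bx)$ and dividing by $t$ yields $\frac{f(\bx + t(\by-\bx)) - f(\bx)}{t} \le f(\by) - f(\bx)$. Since $f$ is differentiable at $\bx$, the left-hand side tends to the directional derivative $\nabla f(\bx)^T(\by-\bx)$ as $t \to 0^+$; passing to the limit gives $f(\by) \ge f(\bx) + \nabla f(\bx)^T(\by-\bx)$. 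This limit step is the only place where differentiability is genuinely used, and is the closest thing to an ``obstacle,'' though it is entirely routine.

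\textbf{Reverse direction (gradient inequality $\Rightarrow$ convexity).} I would fix $\bx, \by \in \R^n$ and $t \in [0,1]$, set $\bz = t\bx + (1-t)\by$, and apply the assumed inequality twice with base point $\bz$: once with target $\bx$, giving $f(\bx) \ge f(\bz) + \nabla f(\bz)^T(\bx - \bz)$, and once with target $\by$, giving $f(\by) \ge f(\bz) + \nabla f(\bz)^T(\by - \bz)$. Taking the convex combination with weights $t$ and $1-t$ and using $t(\bx-\bz) + (1-t)(\by-\bz) = \bzero$, the gradient terms cancel and I obtain $t f(\bx) + (1-t) f(\by) \ge f(\bz) = f\big(t\bx + (1-t)\by\big)$, which is exactly convexity of $f$.

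\textbf{Concave case.} Finally, $f$ is concave iff $-f$ is convex iff, by the part just proved, $(-f)(\by) \ge (-f)(\bx) + \nabla(-f)(\bx)^T(\by-\bx)$ for all $\bx, \by$; multiplying by $-1$ rearranges this to $f(\by) \le f(\bx) + \nabla f(\bx)^T(\by-\bx)$, as claimed. No further work is needed.
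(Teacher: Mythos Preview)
Your proof is correct and is exactly the standard textbook argument. Note, however, that the paper does not prove this statement at all: it is stated as a background result with a citation to \cite[p.~69--70]{boyd}, so there is no ``paper's own proof'' to compare against. Your argument is in fact essentially the one given in Boyd and Vandenberghe at the cited pages.
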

To establish convergence rates, we will use the notion of strong convexity
\begin{definition}[{\cite[p. 63]{Nesterov04}}]
A continuously differentiable function $f$ of $\mathbb{R}^n$ will be called $\mu$~strongly convex for a positive constant $\mu$ if for all $\bx, \by \in \mathbb{R}^n$ we have
\begin{equation*}
    f(\by) \geq f(\bx) +\langle \nabla f(\bx), \by - \bx \rangle + \frac{\mu}{2}\norm{\bx-\by}^2
\end{equation*}
\end{definition}
We will also use second order characterizations of strong convexity
\begin{theorem}[{\cite[p. 65]{Nesterov04}}]
A twice continuously differentiable function $f$ is $\mu$~strongly convex for a positive constant $\mu$ if and only if for all $\bx \in \mathbb{R}^n$ we have
\begin{equation*}
    \nabla^2 f(\bx) \geq \mu I
\end{equation*}
\end{theorem}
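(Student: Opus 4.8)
The plan is to bridge the two characterizations using the first-order criterion for convexity already recorded in \Cref{th:first-order-conv} together with a second-order Taylor expansion. I would introduce the auxiliary function $g(\bx) := f(\bx) - \tfrac{\mu}{2}\norm{\bx}^2$, which is again $C^2$ with $\nabla^2 g(\bx) = \nabla^2 f(\bx) - \mu I$. The first step is to check that $f$ is $\mu$-strongly convex exactly when $g$ is convex: using the identity $\tfrac{\mu}{2}\norm{\by}^2 = \tfrac{\mu}{2}\norm{\bx}^2 + \mu\langle \bx,\by-\bx\rangle + \tfrac{\mu}{2}\norm{\by-\bx}^2$, the defining inequality $f(\by)\ge f(\bx)+\langle\nabla f(\bx),\by-\bx\rangle+\tfrac{\mu}{2}\norm{\by-\bx}^2$ rearranges term-by-term into $g(\by)\ge g(\bx)+\langle\nabla g(\bx),\by-\bx\rangle$; by \Cref{th:first-order-conv} the latter, quantified over all $\bx,\by$, is equivalent to convexity of $g$. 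Hence it suffices to prove that a $C^2$ function $g$ is convex if and only if $\nabla^2 g(\bx)\succeq 0$ for every $\bx$, and then unwind the substitution.

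For the ``$\Leftarrow$'' direction I would invoke Taylor's theorem with integral remainder: for any $\bx,\by$, $g(\by)=g(\bx)+\langle\nabla g(\bx),\by-\bx\rangle+\int_0^1 (1-t)\,(\by-\bx)^\top\nabla^2 g(\bx+t(\by-\bx))(\by-\bx)\,\mathrm{d}t$. If every Hessian along the segment is positive semidefinite the integrand is nonnegative, so $g(\by)\ge g(\bx)+\langle\nabla g(\bx),\by-\bx\rangle$, and convexity follows from \Cref{th:first-order-conv}. For the ``$\Rightarrow$'' direction, fix $\bx$ and an arbitrary direction $\vv$ and restrict to the line $s\mapsto\psi(s):=g(\bx+s\vv)$; this is a convex $C^2$ function of one real variable, so $\psi'$ is nondecreasing (the one-dimensional instance of \Cref{th:first-order-conv}), whence $\psi''(s)\ge 0$ for all $s$; evaluating at $s=0$ gives $\vv^\top\nabla^2 g(\bx)\vv=\psi''(0)\ge 0$. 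Since $\vv$ and $\bx$ were arbitrary, $\nabla^2 g(\bx)\succeq 0$ everywhere. Translating back through $g$ yields $\nabla^2 f(\bx)-\mu I\succeq 0$, i.e. $\nabla^2 f(\bx)\succeq\mu I$, in both directions.

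Alternatively one can skip the reduction to $g$ and argue directly: $\nabla^2 f\succeq\mu I$ plugged into the integral Taylor remainder gives $\int_0^1(1-t)\,(\by-\bx)^\top\nabla^2 f(\bx+t(\by-\bx))(\by-\bx)\,\mathrm{d}t\ge \tfrac{\mu}{2}\norm{\by-\bx}^2$, which is precisely the strong-convexity inequality; and conversely, expanding $f(\bx+s\vv)$ to second order in $s$, comparing with the strong-convexity lower bound $f(\bx)+s\langle\nabla f(\bx),\vv\rangle+\tfrac{\mu}{2}s^2\norm{\vv}^2$, dividing by $s^2$ and letting $s\to 0$ forces $\vv^\top\nabla^2 f(\bx)\vv\ge\mu\norm{\vv}^2$. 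I expect no genuine obstacle here — this is a classical fact — so the only points requiring care are using the full strength of the $C^2$ hypothesis (continuity of the Hessian, so that the pointwise curvature bound indeed passes through the integral remainder and so that the $s\to 0$ limit in the second-order expansion is legitimate) and keeping track of the universal quantifiers over base points $\bx$ and over directions $\vv$.
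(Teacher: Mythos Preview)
Your proposal is correct and complete; this is the standard textbook argument. Note, however, that the paper itself does not supply a proof of this statement at all --- it is listed in the background section as a known result with a citation to \cite{Nesterov04}, so there is no ``paper's own proof'' to compare against. Your write-up (either via the auxiliary $g=f-\tfrac{\mu}{2}\norm{\cdot}^2$ and \Cref{th:first-order-conv}, or directly via the integral Taylor remainder and the $s\to 0$ limit) is exactly the argument one finds in Nesterov and would serve perfectly well if the paper needed to include one.
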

Symmetrically, a function will be called $\mu$ strongly concave if $-f$ is $\mu$ strongly convex.
\subsection{Background in Game Theory}
\begin{tcolorbox}
In this short section, we remind to the reader a generalization of Von-Neumann's Minimax theorem, which we will exploit to analyze the equilibrium solution of
the different GANs' architectures. A special case of Fan's minimax theorem is the following
\end{tcolorbox}

\begin{corollary}[Fan's minimax theorem, \cite{fan1953minimax}]
Let $ X\subset \mathbb {R} ^{n}$ and $Y\subset \mathbb {R} ^{m}$ be convex non-empty sets. 
Suppose that $ X$ is compact and  $f:X\times Y\rightarrow \mathbb {R}$ is a  function such that $f(\cdot, y)$ is lower semicontinuous on $ X$ for each $ y \in Y$ and that $f$ is convex concave. Then we have that
\[ \min _{x\in X}\sup _{y\in Y}f(x,y)=\sup _{y\in Y}\min _{x\in X}f(x,y).\]
\end{corollary}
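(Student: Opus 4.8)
The plan is to prove the equality by establishing the two inequalities separately, the ``$\ge$'' direction being essentially free and the ``$\le$'' direction carrying all the weight. For ``$\ge$'' I would first observe that the relevant extrema are attained: each $f(\cdot,y)$ is lower semicontinuous on the compact set $X$, so $\min_x f(x,y)$ exists, and $x\mapsto\sup_y f(x,y)$ is a pointwise supremum of lsc functions, hence lsc, so $\min_x\sup_y f(x,y)$ exists; then for any $x_0\in X,\,y_0\in Y$ the chain $\sup_y f(x_0,y)\ge f(x_0,y_0)\ge\min_x f(x,y_0)$, followed by taking the minimum over $x_0$ and then the supremum over $y_0$, gives $\min_x\sup_y f(x,y)\ge\sup_y\min_x f(x,y)$.

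For the reverse inequality, set $v^\ast:=\sup_{y\in Y}\min_{x\in X} f(x,y)$; we may assume $v^\ast<+\infty$, since otherwise weak duality already forces equality. The goal becomes to produce $x^\ast\in X$ with $f(x^\ast,y)\le v^\ast$ for every $y\in Y$. I would introduce, for each $y\in Y$, the sublevel set $C_y:=\{x\in X: f(x,y)\le v^\ast\}$, which is nonempty (because $\min_x f(x,y)\le v^\ast$), closed (by lower semicontinuity of $f(\cdot,y)$), convex (by convexity of $f(\cdot,y)$), and hence compact. Showing $\bigcap_{y\in Y}C_y\neq\varnothing$ suffices, and by the finite intersection property of compact sets this reduces to the \emph{finite} statement $\min_{x\in X}\max_{1\le i\le k} f(x,y_i)\le v^\ast$ for an arbitrary finite family $y_1,\dots,y_k\in Y$.

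To handle the finite case I would pass to the simplex: with $\Delta_k$ the probability simplex in $\mathbb{R}^k$ and $\phi(x,\lambda):=\sum_{i=1}^k\lambda_i f(x,y_i)$, one has $\max_i f(x,y_i)=\max_{\lambda\in\Delta_k}\phi(x,\lambda)$, while $\phi$ is convex and lsc in $x$ on the compact convex set $X$ and affine (hence concave and continuous) in $\lambda$ on the compact convex set $\Delta_k$. Invoking the finite-dimensional minimax theorem for $\phi$ yields $\min_x\max_\lambda\phi=\max_\lambda\min_x\phi$. Then concavity of $f(x,\cdot)$ together with convexity of $Y$ gives $\phi(x,\lambda)\le f\bigl(x,\sum_i\lambda_i y_i\bigr)$ with $\sum_i\lambda_i y_i\in Y$, so $\min_x\phi(x,\lambda)\le\min_x f\bigl(x,\sum_i\lambda_i y_i\bigr)\le v^\ast$ for every $\lambda$, and hence $\min_x\max_i f(x,y_i)=\max_\lambda\min_x\phi\le v^\ast$, as needed.

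The hard part is the finite-dimensional minimax equality invoked in the last step — morally von Neumann's theorem, obtainable via Brouwer's fixed point theorem, via the Knaster--Kuratowski--Mazurkiewicz lemma, or by induction on $k$ with the two-point case as the base. Everything else is routine: the continuity and compactness bookkeeping needed to justify that the various minima are attained (using that each $f(\cdot,y_i)$ is lsc and bounded below on the compact $X$, so $\lambda\mapsto\min_x\phi(x,\lambda)$ is in fact Lipschitz), plus two appeals to convexity/concavity. One could alternatively bypass the explicit reduction and run a single KKM-based argument on the sets $C_y$ directly, but staging the proof as above keeps each ingredient transparent.
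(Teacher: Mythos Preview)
The paper does not supply a proof of this corollary; it is quoted as a background result from \cite{fan1953minimax} in the preliminaries, with no argument given. There is therefore no paper proof to compare your sketch against.

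For what it is worth, your outline is sound and follows the classical route: weak duality is free, and for the hard direction you pass to the sublevel sets $C_y$, reduce via the finite intersection property of compact sets to finitely many $y_1,\dots,y_k$, linearize over the simplex, and invoke a compact--compact minimax result for the auxiliary function $\phi$, finishing with the concavity bound $\sum_i\lambda_i f(x,y_i)\le f(x,\sum_i\lambda_i y_i)$. The one point worth flagging is that the ``finite-dimensional minimax theorem'' you invoke for $\phi$ still has $\phi(\cdot,\lambda)$ merely lower semicontinuous (not continuous) in $x$, so you are appealing to a Sion-type statement rather than von Neumann's classical continuous-payoff result; you correctly identify this as the substantive step and name the standard tools (KKM, Brouwer, or the two-point induction) that dispatch it. Everything else --- attainment of the minima, closedness and convexity of the $C_y$, the simplex identity $\max_i f(x,y_i)=\max_{\lambda\in\Delta_k}\phi(x,\lambda)$ --- is routine as you say.
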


\clearpage
\section{Preliminaries}
\begin{tcolorbox}
The below time-reparametrization lemma shows that
the solution for a non-autonomous system, multiplicative to a gradient flow can be
derived by just time-rescaling of the solution of the simplified gradient ascent dynamics.
Indeed, since {the multiplicative term} is common across all terms of the  vector
field then over the time it {dictates only the magnitude of the vector field} (the speed of the motion), but does not
affect the directionality other than moving backwards or forwards along the same trajectory.
\end{tcolorbox}

\begin{replemma}{lemma:reparametrization}{lemma:restate:lemma_reparam}

\end{replemma}
\begin{proof}
Firstly, notice that it holds $\bx(0)= \bx_{\textrm{init}}$ and 
$ \dot{\bx} = \nabla k(\bx) $, since $\bx$ is the unique solution of $\Sigma_1$
It is easy to check that:
\begin{align*}
    \bz(0)&=\bx ( \int_{0}^0 h(s) \mathrm{d}s )=\bx(0)= \bx_{\textrm{init}}\\
    \dot{\bz}&= \dot{\bx} \left( \int_0^t   h(s) \mathrm{d}s \right)\times  \derivativeclosed{ \int_0^t  h(s) \mathrm{d}s}\\
    &=\nabla k \left(\bx\left (\int_0^t   h(s) \mathrm{d}s\right) \right) h(t) = \nabla k (\bz) h(t)
\end{align*}
\end{proof}

\begin{tcolorbox}
In order to leverage the convex-concave properties of the operators in our hidden structure under the Gradient Descent Ascent dynamics 
we need to recover the equivalent system $(T)$ in the operator space $\binom{\dot{\bF}}{\dot{\bG}}=T\left[\binom{\bF}{\bG}\right]$. 
\[(\Sigma):=
\begin{Bmatrix}
\dot{\btheta}&=&-&\nabla L(\bF(\btheta) ,\bG(\bphi))\\
\dot{\bphi}&=&&\nabla L(\bF(\btheta) ,\bG(\bphi))\\
\end{Bmatrix}\equiv
\begin{Bmatrix}
        \dot{\btheta}_i &=&-& \nabla_{\btheta_i} f_i(\btheta_i) h_{f_i,L}(t)\\
        \dot{\bphi}_j &=& & \nabla_{\bphi_j} g_j(\bphi_j) h_{g_j,L}(t)
\end{Bmatrix}
\]
From this point, applying the aforementioned lemma, under GDA each $f_i$ and $g_j$ follows a time dependent rescaling of the corresponding gradient ascent solution.
Exploiting the monotonicity of $f_i(t)$ and $g_j(t)$ under gradient ascent, we can construct an invertible map between the parameter space $\{({\btheta}_i,{\bphi}_j)\}$ and the operator space $\{(f_i,g_j)\}$ which allows us to construct the equivalent system $T$ in the operator space.\emph{ Notice that the properties of gradient ascent are crucial since the operator space can be arbitrarily smaller in dimension. In this case a smooth invertible map that is common for all initializations cannot exist.}
\end{tcolorbox}
\begin{reptheorem}{theorem:reparametrization}{lemma:Induced dynamics}

\end{reptheorem}
\begin{proof}
Let us first study a simpler dynamical system $(\Sigma^*)$ with unique solution of  $\gamma_{\btheta_i(0)}(t)$. 
\begin{align*}
  (\Sigma^*)\equiv
    \begin{Bmatrix}
    \dot{\bz} &=& \nabla f_i(\bz) \\
    \bz(0) &=& \btheta_i(0)
    \end{Bmatrix}  
\end{align*}
It is easy to observe that:
\begin{align*}
    \dot{f}_i = \nabla f(\bz) \dot{\bz} = \norm{\nabla f(\bz)}^2 
\end{align*}
If $\btheta_i(0)$ is a stationary point of $f_i$ then the trajectory of $\bz$ is a single point. But the trajectory of $\btheta_i$ under the dynamics of \cref{eq:gda} is also a single point so we can pick the following function
\begin{equation*}
 X_{\btheta_i(0)}(f_i) = \btheta_i(0). 
\end{equation*}
On the other hand if $\btheta_i(0)$ is not a stationary point of $f_i$, $f_i$ continuously increases along the trajectory of $(\Sigma^*)$.  Therefore $A_{\btheta_i(0)}(t)=f_i(\gamma_{\btheta_i(0)}(t))$ is an increasing function and therefore invertible. Let us call $A^{-1}_{\btheta_i(0)}(f_i)$ the inverse.

Let's recall now the $\btheta_i$ part of the dynamical system of interest \cref{eq:gda}
\begin{equation*}
    \begin{aligned}
        \dot{\btheta}_i &= - \nabla_{\btheta_i} f_i(\btheta_i) \frac{\partial L}{\partial f_i} ( \bF(\btheta), \bG(\bphi)) \\
    \end{aligned}
\end{equation*}
initialized at $\btheta_i(0)$.  Applying \Cref{lemma:restate:lemma_reparam} for the first equation with 
\begin{equation*}
    h(t)= - \frac{\partial L}{\partial f_i} (\bF(\btheta(t) ), \bG(\bphi(t)))
\end{equation*}
we have that under the dynamics of \cref{eq:gda} 
\begin{equation}\label{eq:reparam} \tag{P}
    \btheta_i(t) = \gamma_{\btheta_i(0)}\left(\int_0^t h(s) \mathrm{d}s\right)
\end{equation}
Thus it holds 
\begin{equation*}
  f_i(\btheta_i(t))=f\left(\gamma_{\btheta_i(0)}\left(\int_0^t h(s) \mathrm{d}s\right)\right)=A_{\btheta_i(0)}\left(\int_0^t h(s) \mathrm{d}s\right)  
\end{equation*}
or equivalently 
\begin{equation*}
    \int_0^t h(s) \mathrm{d}s= A^{-1}_{\btheta_i(0)}(f_i(\btheta_i(t)))
\end{equation*}
Plugging in back to \cref{eq:reparam}
\begin{equation*}
  \btheta_i(t) = \gamma_{\btheta_i(0)}(A^{-1}_{\btheta_i(0)}(f_i(\btheta_i(t))))  
\end{equation*}
Therefore we can pick 
\begin{equation*}
 X_{\btheta_i(0)}(f_i) = \gamma_{\btheta_i(0)}(A^{-1}_{\btheta_i(0)}(f_i))    
\end{equation*}
which is $C^1$ as composition of $C^1$ functions. We can perform an equivalent analysis for $\bphi_j(0)$ and $g_j$ to pick $C^1$ function $X_{\bphi_j(0)}$. Let us now track the time derivative of $f_i(\btheta_i)$ and $g_j(\bphi_j)$ 
\begin{align*}
    \dot{f}_i &= \nabla_{\btheta_i} f_i(\btheta_i) \dot{\btheta_i} = \norm{\nabla_{\btheta_i} f_i(\btheta_i)}^2 \frac{\partial L}{\partial f_i} ( \bF, \bG)\\
    \dot{g}_j &= \nabla_{\bphi_j} g_j(\bphi_j) \dot{\bphi_j} = \norm{\nabla_{\bphi_j} g_j(\bphi_j)}^2 \frac{\partial L}{\partial g_j} ( \bF, \bG)\\
\end{align*}
We can now replace $\btheta_i = X_{\btheta_i(0)}(f_i)$ and $\bphi_j = X_{\bphi_j(0)}(g_j)$ to get the equations required.
\end{proof}
\section{Hidden Convex Concave Games}
\begin{tcolorbox}
In this section, we analyze the derived stability properties of the hidden convex concave games.
It is worth mentioning that without strict/strong convexity/concavity from at least one of the operators,
the quality of the results are limited to ``Lyapunov Stability''. Firstly, we present a construction of a Lyapunov function for the operators' dynamics
\textbf{\Cref{th:th_stable:restate}}. Then, in \textbf{\Cref{th:stable_sigmoid:restate}, \Cref{th:output_stable:restate}} we explore the stability of the
initial conditions in the parameter space.
\end{tcolorbox}

\subsection{General case}
\begin{tcolorbox}
The following theorem presents the construction of a Lyapunov potential function for the induced operator dynamics.
To motivate its construction, we can study a fundamental convex-concave function $L(x,y)=(x-p)^2-(y-q)^2$ with saddle point  $(p,q)$. Under the gradient-descent-ascent dynamics \[(T):=
\begin{Bmatrix}
\dot{x}&=&-&\nabla_x L(x,y)&(\text{minimization of convex part})\\
\dot{y}&=&&\nabla_y L(x,y)&(\text{maximization of concave part})\\
\end{Bmatrix}.\] it is easy to check that $H(x,y) = (x-p)^2 + (y-q)^2$ meets all the criteria of a Lyapunov function.
The construction below extends this argument to any convex-concave function $L(\bF,\bG)$ and bypasses the more complex multiplicative terms for the gradient induced dynamics of \Cref{lemma:Induced dynamics}. \emph{Notice that \[
   H(\bF,\bG) = \sum_{i=1}^{N} \int_{p_{i}}^{f_{i}} \frac{z-p_{i}}{\norm{\nabla f_{i}(X_{\btheta_{i}(0)}(z))}^2} \mathrm{d}z + \sum_{j=1}^{M} \int_{q_{j}}^{g_{j}} \frac{z-q_{j}}{\norm{\nabla g_{j}(X_{\bphi_{j}(0)}(z))}^2} \mathrm{d}z
\] coincides with the $\ell_2^2$ distance from $(\bp,\bq)$ in the case of gradient norms equal to one, i.e. 
\begin{equation*}
 \norm{\nabla f_{i}}^2=\norm{\nabla g_{j}}^2=1   
\end{equation*}}
\end{tcolorbox}
\begin{replemma}{lemma:lyapunov-function}{lemma:app-lyapunov-function}

\end{replemma}
\begin{proof}
Simple substitution gets us the following
\begin{align*}
    \dot{H} &= - \sum_{i=1}^{N} ( f_i - p_i) \frac{\partial L}{\partial f_i} (\bF, \bG) + \sum_{j=1}^{M} ( g_j - q_j) \frac{\partial L}{\partial g_j} (\bF, \bG) \\
     &= - \langle \bF - \bp , \nabla_{\bF} L (\bF, \bG)\rangle +
    \langle \bG - \bq , \nabla_{\bG} L (\bF, \bG ) \rangle
\end{align*}
By \cref{th:first-order-conv} for the convex $L(\cdot, \bG)$ and concave $L(\bF, \cdot)$.
\begin{align*}
     - \langle \bF - \bp , \nabla_{\bF} L (\bF, \bG )\rangle \leq L(\bp, \bG) - L(\bF, \bG)\\
     \langle \bG - \bq , \nabla_{\bG} L (\bF, \bG ) \leq L(\bF, \bG) - L(\bF, \bq)
\end{align*}
Thus we can end up writing
\begin{align*}
    \dot{H} &\leq L(\bp, \bG) - L(\bF, \bG) + L(\bF, \bG) - L(\bF, \bq)\\
    & \leq L(\bp, \bG) - L(\bp,\bq) + L(\bp,\bq) - L(\bF, \bq) \leq 0
\end{align*}
The last inequality holds since $(\bp,\bq)\in\saddle(L)$. Indeed, if $(\bp,\bq)$ is a saddle point of $L$ then
$  L(\bp, \bG) \le L(\bp,\bq) \le L(\bF, \bq) $. 
\end{proof}
\begin{reptheorem}{th:conv-conc-transformed}{th:th_stable:restate}

\end{reptheorem}
\begin{proof}Leveraging \Cref{lemma:app-lyapunov-function}, there is a function $H$ which is well defined in $D = \{ \range{f_i}{\btheta_i(0)} \}_{i=1}^N \times \{ \range{g_j}{\bphi_j(0)} \}_{j=1}^M$ and in this domain $\dot{H}\leq 0$. Given the safety conditions we know that $(\bp, \bq) \in D$. Observe that for the proposed function, it holds that $H(\bp, \bq)=0$. Also for each $f_i$ and $g_j$ term in $H$ we know that it has its minimum of value $0$ at the corresponding $p_i$ and $q_j$. We can deduce this by taking the derivative of each term to study its monotonicity. For example, the $f_i$ terms are strictly increasing in $f_i > p_i$ and strictly decreasing in $f_i < p_i$.  Thus for all $D -\{(\bp, \bq)\}$, $H > 0$. Applying \cref{th:lyapunov} for the continuously differentiable $H$ we have that $(\bp, \bq)$ is stable for \cref{eq:gda-transformed}.
\end{proof}
\clearpage
\begin{tcolorbox}
In the following example, we examine how it is possible to transfer the stability properties between two (topological conjugate) dynamical systems.
\end{tcolorbox}
\begin{reptheorem}{th:stability_sigmoid_main}{th:stable_sigmoid:restate}

\end{reptheorem}
\begin{proof}
Firstly, we recall the property of sigmoid's gradient: 
\begin{equation*}
 \frac{\mathrm{d} \sigma(x)}{\mathrm{d} x}=\sigma(x)(1-\sigma(x)).   
\end{equation*}
Thus the transformed dynamical system in the operator space can be written as:
\[(T):=
    \begin{Bmatrix}
        \dot{f}_i &=&-& f_i^2(1-f_i)^2 \frac{\partial L}{\partial f_i} (\bF, \bG) \\
        \dot{g}_j &=&&  g_j^2(1-g_j)^2 \frac{\partial L}{\partial g_j} (\bF, \bG)
    \end{Bmatrix}
\]
\emph{Notice that 
\begin{enumerate}
    \item The dynamical system $(T)$ in the operator space is \underline{independent} of the initial conditions. In fact, the dynamical system of $(T)$ and the one of \cref{eq:gda}, called $(\Sigma)$ for short, are diffeomorphic for all initializations, not just a specific trajectory. \label{item:independent-cond}
    \item Since $(\btheta(0),\bphi(0))$ is safe, using \cref{th:th_stable:restate} we get that $(\bp, \bq)$ is stable for $(T)$.\label{item:stable-transformed}
\end{enumerate}
}

We would like to prove that for every open neighborhood $V$ of $(\bF^{-1}(\bp), \bG^{-1}(\bq))$ there exists an open neighborhood $U$ of $(\bF^{-1}(\bp), \bG^{-1}(\bq))$ such that
\begin{equation*}
    (\btheta_{\textrm{init}},\bphi_{\textrm{init}}) \in U  \implies \forall t\geq 0 : (\btheta(t),\bphi(t))\in V. 
\end{equation*}

Using the diffeomorphism $\gamma=\gamma_{\Sigma \to T}$ between GDA dynamics of $(\Sigma)$ and $(T)$ , $\gamma(V)$ is an open neighborhood of $(\bp, \bq)$ since $V$ is open and $\gamma((\bF^{-1}(\bp), \bG^{-1}(\bq)))\equiv (\bp,\bq) \in \gamma(V)$. By \Cref{item:stable-transformed}, since $(\bp, \bq)$ is stable for $(T)$ there is an open neighborhood $\tilde{U}$ of $(\bp,\bq)$ such that:
\[    (\bF_{\textrm{init}},\bG_{\textrm{init}}) \in \tilde{U}  \implies \forall t\geq 0 : (\bF(t),\bG(t))\in \gamma(V) \]
or equivalently
\[    \gamma(\btheta_{\textrm{init}},\bphi_{\textrm{init}}) \in \tilde{U}  \implies \forall t\geq 0 :  \gamma(\btheta(t),\bphi(t))\in \gamma(V) \]
Indeed, using the inverse diffeomorphism $\gamma^{-1}$, we can establish that for $U=\gamma^{-1}(\tilde{U})$ it holds that
\[   (\btheta_{\textrm{init}},\bphi_{\textrm{init}}) \in U  \implies \forall t\geq 0 : (\btheta(t),\bphi(t))\in V \]

\end{proof}

\begin{tcolorbox}
Until now, we have established the stability of a pair $(\bp,\bq)$ for the induced dynamics $(T)$. By the construction of the induced dynamics, $(T)$ is coupled only with a very specific initial condition $ (\btheta_{\textrm{init}},\bphi_{\textrm{init}}) $.
In order to tackle the challenge of a stability result for a whole region of initial conditions, in the following lemma we prove that $r(\btheta,\bphi)=\norm{\bF(\btheta) - \bp}^2 + \norm{\bG(\bphi) - \bq}^2$ can work like an intrinsic measure of closeness for the $\{\btheta,\bphi\}$-parameter space around a hidden fixed point of the $\{\bF,\bG\}$-operator space. Under this ``hidden'' neighborhood notion, stability property can be taken by assuming the properness of the hidden operators.
\end{tcolorbox}
\begin{reptheorem}{th:hidden_meta_stability}{th:output_stable:restate}

\end{reptheorem}

\begin{proof}
Let us define the following sets
\[\begin{matrix}
    \forall i \in [n]& : A_i &=& \{&\btheta_i \in \mathbb{R}^{n_i} \quad &|& \quad  f_i(\btheta_i) \in [p_i -\xi, p_i + \xi] \}\\
    \forall j \in [m]& : B_j &=& \{&\bphi_j \in \mathbb{R}^{m_j} \quad &|& \quad  g_j(\bphi_j) \in [q_j -\xi, q_j + \xi] \}
\end{matrix}\]
Since $f_i$ and $g_j$ are proper $A_i$ and $B_j$ are compact sets. Thus, the continuous functions $\norm{\nabla f_i(\btheta_i)}^2$ and $\norm{\nabla g_j(\bphi_j)}^2$ have a minimum and maximum value on $A_i$ and $B_j$ respectively. Let us call $K_{f_i}$ and $K_{g_j}$ the maxima and $\kappa_{f_i}$ and $\kappa_{g_j}$ the minima. Observe that the minima  and maxima must be all greater than zero since $[p_i -\xi, p_i + \xi]$ and $[q_j -\xi, q_j + \xi]$ are regular values. Let us define
\begin{align*}
    \kappa & = \min \{ \min_{1 \leq i \leq n} \kappa_{f_i},  \min_{1 \leq j \leq m} \kappa_{g_j} \}\\
    K & = \max \{ \max_{1 \leq i \leq n} K_{f_i},  \max_{1 \leq j \leq m} K_{g_j} \}
\end{align*}
where $K \geq \kappa > 0$ as we discussed. Let us create the following set
\begin{equation*}
    S = \{(\btheta, \bphi) \in \mathbb{R}^{N} \times \mathbb{R}^{M} \quad | \quad  \forall i \in [n]: \btheta_i \in A_i, \quad  \forall j \in [m]: \bphi_j \in B_j\}
\end{equation*}
We can prove that every $(\btheta, \bphi) \in S$ is a safe initialization for $(\bp, \bq)$. Of course, every $\btheta_i$ and $\bphi_j$ are not stationary points of $f_i$ and $g_j$ respectively. We also need to prove that the equilibrium $(\bp, \bq)$ is feasible. We will prove this by contradiction. Let there be a $(\btheta, \bphi) \in S$ such that $(\bp, \bq)$ is not feasible. Without loss of generality we can assume that there is an $i \in [n]$ such that $p_i \notin \range{f_i}{\btheta_i}$. The case for the $g_j$ is symmetrical. Along the gradient ascent trajectory of $f_i$ with initialization at $\btheta_i$, observe that $f_i(t)$ cannot attain an infimum or a supremum in $[p_i -\xi, p_i + \xi]$ because there are no stationary points of $f_i$ in $A_i$. Observe also that at initialization $f_i(\btheta_i) \in [p_i -\xi, p_i + \xi]$. Thus $[p_i -\xi, p_i + \xi] \subseteq \range{f_i}{\btheta_i}$, a contradiction.

Let us pick an initialization $(\btheta(0), \bphi(0))$ such that $r(0) \leq \xi^2$. It is clear that $(\btheta(0), \bphi(0)) \in S$ and so it is safe for $(\bp, \bq)$. We can do the same steps as in \cref{th:th_stable:restate} to prove that the function $H(\bF,\bG)$ below does not increase under the dynamics of \cref{eq:gda}:
\begin{equation*}
    H(\bF,\bG) = \sum_{i=1}^{N} \int_{p_{i}}^{f_{i}} \frac{z-p_{i}}{\norm{\nabla f_{i}(X_{\btheta_{i}(0)}(z))}^2} \mathrm{d}z + \sum_{j=1}^{M} \int_{q_{j}}^{g_{j}} \frac{z-q_{j}}{\norm{\nabla g_{j}(X_{\bphi_{j}(0)}(z))}^2} \mathrm{d}z
\end{equation*}
Observe that since $(\btheta(0), \bphi(0)) \in S$ we have that the interval between $p_i$ and $f_i(\btheta_i(0))$ belongs in $[p_i -\xi, p_i + \xi]$ and $\norm{\nabla f_i(\cdot)}^2 \geq \kappa$ in this interval.  Thus we can write
\begin{equation*}
    \frac{(f_i(\btheta_i(0)) - p_i)^2}{2\kappa} \geq \int_{p_{i}}^{f_{i}(\btheta_i(0))} \frac{z-p_{i}}{\norm{\nabla f_{i}(X_{\btheta_{i}(0)}(z))}^2} \mathrm{d}z
\end{equation*}

Repeating the same argument for all $f_i$ and $g_j$ we have that
\begin{equation*}
    \frac{r(0)}{2\kappa} \geq H(\bF(\btheta(0)),\bG(\bphi(0))) \geq H(\bF(\btheta(t)),\bG(\bphi(t)))
\end{equation*}

Let us pick $r(0) < \min\{\xi^2,  \xi^2 \frac{\kappa}{K}\} = \xi^2 \frac{\kappa}{K}$. 
We already know that trajectories start in $S$. We will prove that they also remain in $S$. We will do this by contradiction. If a trajectory escaped $S$, then without loss of generality this means that there is at least an $i \in [n]$ such that at some $t>0$, $f_i(\btheta_i(t)) \notin [p_i -\xi, p_i + \xi]$.  The case of $g_j$ is similar. Clearly we have that
\begin{equation*}
    \int_{p_{i}}^{f_i(\btheta_i(t))} \frac{z-p_{i}}{\norm{\nabla f_{i}(X_{\btheta_{i}(0)}(z))}^2} \mathrm{d}z \geq \min \left\{\int_{p_{i}}^{p_i - \xi} \frac{z-p_{i}}{\norm{\nabla f_{i}(X_{\btheta_{i}(0)}(z))}^2} \mathrm{d}z, \int_{p_{i}}^{p_i + \xi} \frac{z-p_{i}}{\norm{\nabla f_{i}(X_{\btheta_{i}(0)}(z))}^2} \mathrm{d}z \right\}
\end{equation*}
As above, we have that the gradients in the integrals of the right hand side are less or equal than $K$ so

\begin{equation*}
    \int_{p_{i}}^{f_i(\btheta_i(t))} \frac{z-p_{i}}{\norm{\nabla f_{i}(X_{\btheta_{i}(0)}(z))}^2} \mathrm{d}z \geq \frac{\xi^2}{2K}.
\end{equation*}

The terms of $H$ are all non-negative so we have that
\begin{equation*}
    \frac{r(0)}{2\kappa} \geq H(\bF(\btheta(t)),\bG(\bphi(t))) \geq  \int_{p_{i}}^{f_i(\btheta_i(t))} \frac{z-p_{i}}{\norm{\nabla f_{i}(X_{\btheta_{i}(0)}(z))}^2} \mathrm{d}z \geq \frac{\xi^2}{2K}.
\end{equation*}
But $r(0) < \xi^2 \frac{\kappa}{K}$, a contradiction. So the trajectories will stay in $S$. We can then write
\begin{equation*}
    \int_{p_{i}}^{f_i(\btheta_i(t))} \frac{z-p_{i}}{\norm{\nabla f_{i}(X_{\btheta_{i}(0)}(z))}^2} \mathrm{d}z \geq \frac{(f_i(\btheta_i(t)) - p_i)^2}{2K}.
\end{equation*}

Repeating the same argument for all $f_i$ and $g_j$ we have that
\begin{equation*}
    \frac{r(0)}{2\kappa} \geq H(\bF(\btheta(t)),\bG(\bphi(t))) \geq \frac{r(t)}{2K}.
\end{equation*}
For every  $\epsilon >0$, there is a positive $\delta = \frac{\min\{\xi^2, \epsilon\}\kappa}{K}$ such that
\begin{equation*}
    r(0) < \delta \implies r(t) < \epsilon.
\end{equation*}
\end{proof}

A special case of the above result is the standard convex-concave games:
\begin{corollary}\label{coro:non-hidden-lemma-las}
Let $L(\mathbf{x},\mathbf{y})$ be strictly convex concave and $\saddle(L)$ is the non empty set of equilbria of $L$. Then  $\saddle(L)$ is locally asymptotically stable for continuous GDA dynamics.
\end{corollary}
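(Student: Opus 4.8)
This statement is precisely the ``non-hidden'' specialization of \Cref{lemma:las}, so the plan is to exhibit an arbitrary strictly convex concave game on $\mathbb{R}^n\times\mathbb{R}^m$ as a hidden convex concave game with trivial operators and then invoke that lemma. I would let the minimization player control $n$ scalar variables through the identity maps $f_i(\theta_i)=\theta_i$ and the maximization player control $m$ scalars through $g_j(\phi_j)=\phi_j$; then $\bF$ and $\bG$ are the identity maps of $\mathbb{R}^n$ and $\mathbb{R}^m$, we have $L(\bF(\btheta),\bG(\bphi))=L(\btheta,\bphi)$, and the GDA system \cref{eq:gda} is literally the standard dynamics $\dot{\btheta}=-\nabla_{\btheta}L,\ \dot{\bphi}=\nabla_{\bphi}L$. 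Because the operators are the identity, the induced system \cref{eq:gda-transformed} coincides with \cref{eq:gda}, so no transfer of dynamics between parameter and output space is needed.

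\textbf{Key steps.} First I would record that $\nabla f_i\equiv 1$ and $\nabla g_j\equiv 1$, so none of the $f_i,g_j$ has a stationary point, every real value is a regular value, and the gradient-ascent trajectory through any $\theta_i(0)$ is $t\mapsto\theta_i(0)+t$, whose image is all of $\mathbb{R}$; hence $\range{f_i}{\theta_i(0)}=\mathbb{R}$ and $\range{g_j}{\phi_j(0)}=\mathbb{R}$ for every initialization. By \Cref{definition:safe-conditions} this makes every $(\btheta(0),\bphi(0))$ safe for every $(\bp,\bq)\in\saddle(L)$, so the set $Z$ appearing in \Cref{lemma:las} is all of $\saddle(L)$, which is non-empty by hypothesis. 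Applying \Cref{lemma:las} then gives that $\saddle(L)$ is locally asymptotically stable for \cref{eq:gda-transformed}, and since this is the same system as \cref{eq:gda}, the corollary follows.

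\textbf{Self-contained variant and the main obstacle.} If one prefers to avoid the HCC formalism, the same conclusion can be obtained directly by specializing the proof of \Cref{lemma:las}: fix $(\bp,\bq)\in\saddle(L)$, use the Lyapunov function $H(\bx,\by)=\tfrac12\norm{\bx-\bp}^2+\tfrac12\norm{\by-\bq}^2$ (this is \cref{eq:lyuapunov-hamiltonian} with all gradient norms equal to one), and check $\dot H=-\langle\bx-\bp,\nabla_{\bx}L\rangle+\langle\by-\bq,\nabla_{\by}L\rangle\le L(\bp,\by)-L(\bx,\bq)\le 0$ via \Cref{th:first-order-conv} together with the saddle inequalities $L(\bp,\by)\le L(\bp,\bq)\le L(\bx,\bq)$; the sublevel sets of $H$ are balls, hence compact and positively invariant, so \Cref{th:lasalle} shows every nearby trajectory approaches the largest invariant set $\mathcal{M}$ contained in $E=\{\dot H=0\}$, and \Cref{th:pointwise} upgrades this to genuine asymptotic stability once $\mathcal{M}\subseteq\saddle(L)$ is known. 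That last inclusion is where the real content sits and is the main obstacle: on $E$ the first-order inequalities are tight, which forces $\bx=\bp$ at points where $L(\cdot,\by)$ is strictly convex and $\by=\bq$ where $L(\bx,\cdot)$ is strictly concave, but \Cref{def:conv-conc} only guarantees one of these two alternatives at any given point, so one must argue along an invariant trajectory contained in $E$ that the pinned coordinate stays pinned over time and that the dynamics then force the remaining coordinate to a stationary value, yielding $\mathcal{M}\subseteq\saddle(L)$.
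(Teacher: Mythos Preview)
Your proposal is correct and follows essentially the same approach as the paper: both specialize \Cref{lemma:las} to the case where $\bF,\bG$ are the identity maps, observe that every initialization is then safe for every $(\bp,\bq)\in\saddle(L)$ so $Z=\saddle(L)$, and note that in this case \cref{eq:gda-transformed} coincides with \cref{eq:gda} (equivalently, the Lyapunov function \cref{eq:lyuapunov-hamiltonian} reduces to the squared Euclidean distance). Your additional self-contained variant unrolling the LaSalle argument is extra detail not present in the paper but is in the same spirit.
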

\begin{proof}
The proof of the above classical result can be derived by the straightforward application of \cref{lemma:restate-las} for the case of $\bF(\mathbf{x})=\mathbf{x} $ and $ \bG(\mathbf{y})=\mathbf{y}$. 
Notice that \emph{i)} if $\bF,\bG$ are the identity maps all the initial configurations are safe and \emph{ii)} if $\norm{\nabla \bF}^2=\norm{\nabla \bG}^2=1$, then the initialization-dependent Lyapunov functions coincide to a single Lyapunov function, which is actually the squared Euclidean distance $r(\btheta,\bphi) = \norm{\bF(\btheta) - \bp}^2 + \norm{\bG(\bphi) - \bq}^2=\norm{\btheta - \bp}^2 + \norm{\bphi - \bq}^2$. 
\end{proof}

\subsection{Hidden strictly convex concave games}
\subsubsection{Gradient Descent-Ascent Dynamics}
\begin{tcolorbox}
In the following preliminary result, we show that strict convexity or concavity in $L(\cdot,\cdot)$, for at least one of its arguments, suffices to yield locally asymptotic stability  starting from a safe initial condition. Our argumentation leverages the power of \Cref{th:lasalle} and combines the previous section stability results.
Here, we will firstly outline the
basic steps below:
\begin{enumerate}
    \item We start by showing that there exists a compact set $\Omega\subset D$.
    \item Therefore, since $\dot{H}\le 0$ (Lyapunov property), any configuration $(\bF(0),\bG(0))$ starting from a bounded sub-level set $\Omega$ of $H$, will remain inside $\Omega$ over all time.
    \item The second crucial observation is that thanks to the strictness on convexity or concavity of $L$, the largest invariant set of $\dot{H}=0$ contains only points belonging to Von Neumann's $\saddle(L)$.
    \end{enumerate}
Then \Cref{th:lasalle} implies the local asymptotic stability of set $Z$ for \Cref{eq:gda-transformed}.
\end{tcolorbox}

\begin{replemma}{lemma:las}{lemma:restate-las}

\end{replemma}
\begin{proof}
Pick a point $(\bp, \bq) \in Z$. Since our initialization is safe for this saddle point, we can construct the $H$ function as in \cref{th:th_stable:restate} and prove that it has the following property
\begin{align*}
    \dot{H} \leq 0 \text{ in } D = \{ \range{f_i}{\btheta_i(0)} \}_{i=1}^N \times \{ \range{g_j}{\bphi_j(0)} \}_{j=1}^M
\end{align*}
If $(\bF(\btheta(0)),\bG(\bphi(0)))= (\bp, \bq)$ then the theorem holds trivially. Otherwise, take a ball $B$ centered at the equilibirum with a small enough radius such that it is contained in the interior of $D$. 
\begin{align*}
    H_0 &= \min_{(\bF,\bG)\in \partial B} H(\bF,\bG) \\
    \Omega &= \{ (\bF, \bG) \in B | H(\bF, \bG) \leq H_0/2 \} 
\end{align*}
We know that in both of the cases $H_0> 0$ from \cref{th:th_stable:restate}.  

Since $\dot{H} \leq 0$, starting in $\Omega$, it implies that $H(\bF(t), \bG(t)) \leq H_0$ for $t \geq 0$, so $\Omega$ is forward invariant. Since $\Omega \subset D$ we know that it is bounded. $\Omega$ is closed since it is a sublevel set of a continuous function. Notice that the restriction of $\Omega$ on $B$ does not affect the above properties since $\Omega$ is in the interior of $B$. Thus $\Omega$ is a compact forward invariant set, satisfying the requirement of \Cref{th:lasalle}

Let $E = \{ (\bF, \bG) \in B | \dot{H}(\bF, \bG) = 0 \}$. Without loss of generality we can assume that $L(\cdot, \bq)$ is strictly convex as the case of $L(\bp, \cdot)$ being strictly concave is similar. In the following inequality
\begin{equation*}
    \dot{H} \leq L(\bp, \bG) - L(\bp,\bq) + L(\bp,\bq) - L(\bF, \bq) \leq 0
\end{equation*}
we know that $L(\bp, \bG) - L(\bp,\bq) \leq 0$ and $L(\bp,\bq) - L(\bF, \bq) \leq 0$.

So $\dot{H} =0$ implies $L(\bp, \bG) = L(\bp,\bq) = L(\bF, \bq)$. By the strict convexity of $L(\cdot, \bq)$ we know that this means that $\bF = \bp$. Let $\mathcal{M}$ be the largest invariant set inside $E$. By the properties of $\mathcal{M}$ being invariant subset of $E$ we have
\begin{equation*}
    (\bF(0), \bG(0)) \in \mathcal{M} \implies \forall t: \bF(t) = \bp \text{ and } L(\bp, \bG(t) ) = L(\bp,\bq)
\end{equation*}
Taking the time derivatives on each of the constant quantities, they should be zero.
\begin{align*}
    \dot{f_i}=0\Rightarrow& &\forall i \in [N] : \quad\norm{\nabla_{\btheta_i} f_i(X_{\btheta_i(0)}(p_i))}^2 \frac{\partial L}{\partial f_i} (\bp, \bG) &= 0\\
    \dot{L(\bp, \bG(t) )}=0\Rightarrow&&\sum_{j=1}^M \norm{\nabla_{\bphi_j} g_j(X_{\bphi_j(0)} (g_j) )}^2 \left [\frac{\partial L}{\partial g_j} (\bp, \bG) \right]^2 &= 0  
\end{align*}

We know that $\norm{\nabla_{\btheta_i} f_i(X_{\btheta_i(0)}(p_i))} \neq 0$ by the safety conditions and that $\norm{\nabla_{\bphi_j} g_j(X_{\bphi_j(0)} (g_j) )}^2 \neq 0$ inside $D$ again by safety conditions. This implies
\begin{align*}
    \forall i \in [N] : \frac{\partial L}{\partial f_i} (\bp, \bG) = 0 \\
    \forall j \in [M] : \frac{\partial L}{\partial g_j} (\bp, \bG) = 0
\end{align*}
Thus $\mathcal{M}$ contains only stationary points of $L$ so $\mathcal{M} \subseteq \saddle(L)$. In addition $\mathcal{M} \subseteq D$ so only stationary points of $L$ for which the initialization is safe are allowed so $\mathcal{M} \subseteq Z$. Applying \Cref{th:lasalle} we have that for any initialization of \Cref{eq:gda-transformed}  inside $\Omega$, as $t \to \infty$ $(\bF(t), \bG(t))$ approaches $\mathcal{M}$ and thus $Z$ is locally asymptotically stable for \Cref{eq:gda-transformed}.
\end{proof}
A special case of the above result is the standard convex-concave games:
\begin{corollary}\label{coro:non-hidden-strict-las}

\end{corollary}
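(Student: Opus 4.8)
Since the corollary is displayed without a statement and immediately follows the lead-in ``A special case of the above result is the standard convex-concave games,'' I read it as the $\bF,\bG$-trivial specialization of \Cref{lemma:restate-las}, i.e.: \emph{if $L$ is strictly convex concave and $\saddle(L)\neq\emptyset$, then $\saddle(L)$ is locally asymptotically stable for continuous GDA dynamics} (mirroring how \Cref{coro:non-hidden-lemma-las} specialized the earlier stability statement). The plan is to obtain it by invoking \Cref{lemma:restate-las} with $\bF=\mathrm{id}_{\mathbb{R}^n}$ and $\bG=\mathrm{id}_{\mathbb{R}^m}$ and checking that each hypothesis of that lemma becomes automatic in this case.

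First I would record what the identity operators do to the bookkeeping of the hidden framework. Each coordinate map is $f_i(x_i)=x_i$ and $g_j(y_j)=y_j$, with $\norm{\nabla f_i}^2\equiv\norm{\nabla g_j}^2\equiv 1$; hence no operator has a stationary point and every real number is a regular value. The gradient-ascent flow of $f_i$ from $x_i(0)$ is $t\mapsto x_i(0)+t$, whose image over $t\in\mathbb{R}$ is all of $\mathbb{R}$, so $\range{f_i}{x_i(0)}=\range{g_j}{y_j(0)}=\mathbb{R}$ for every initialization (this uses the two-sided definition of $\range{\cdot}{\cdot}$ in \Cref{definition:range}). By \Cref{definition:safe-conditions} this means \emph{every} $(\bx(0),\by(0))$ is safe for \emph{every} $(\bp,\bq)\in\saddle(L)$, so the set $Z$ appearing in \Cref{lemma:restate-las} is all of $\saddle(L)$, which is non-empty by hypothesis.

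Next I would note that the induced dynamics \cref{eq:gda-transformed} collapse: with $\norm{\nabla f_i}^2=\norm{\nabla g_j}^2=1$ they read $\dot{\bx}=-\nabla_{\bx}L$, $\dot{\by}=\nabla_{\by}L$, i.e.\ exactly the GDA system \cref{eq:gda}, and the initialization-dependent Lyapunov function of \cref{eq:lyuapunov-hamiltonian} becomes the single function $H(\bx,\by)=\norm{\bx-\bp}^2+\norm{\by-\bq}^2$. \Cref{lemma:restate-las} then applies verbatim: for each fixed $(\bp,\bq)\in\saddle(L)$ it produces a compact, forward-invariant neighborhood $\Omega_{(\bp,\bq)}$ (here a Euclidean ball about $(\bp,\bq)$, given the form of $H$) such that, by LaSalle's principle (\Cref{th:lasalle}) together with the strict convexity of $L(\cdot,\bq)$ or strict concavity of $L(\bp,\cdot)$, every GDA trajectory started in $\Omega_{(\bp,\bq)}$ approaches an invariant set $\mathcal{M}\subseteq\saddle(L)$. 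Setting $\mathcal{R}=\bigcup_{(\bp,\bq)\in\saddle(L)}\mathrm{int}\,\Omega_{(\bp,\bq)}$, an open neighborhood of $\saddle(L)$, every trajectory from $\mathcal{R}$ approaches $\saddle(L)$, which is precisely the asserted local asymptotic stability of the set in the paper's sense; Lyapunov stability of individual equilibria, if one wants it separately, is the identity-map case of \Cref{th:th_stable:restate}.

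I do not expect a genuine obstacle, as this is a pure specialization. The only two mildly nontrivial points are (i) verifying that \emph{all} initializations are safe, which really does need the $t\in\mathbb{R}$ (two-sided) convention of \Cref{definition:range} so that the identity map attains every value, and (ii) passing from the per-equilibrium sublevel-set neighborhoods $\Omega_{(\bp,\bq)}$ supplied by \Cref{lemma:restate-las} to a single neighborhood of the whole (possibly non-discrete) solution set $\saddle(L)$; both are routine.
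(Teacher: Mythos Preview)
Your reading of the empty corollary is correct, and your proof is essentially the paper's own argument: the paper (in the parallel \Cref{coro:non-hidden-lemma-las}, which already carries exactly this statement and proof) simply applies \Cref{lemma:restate-las} with $\bF,\bG$ the identity, notes that all initializations are then safe, and observes that the Lyapunov function collapses to the squared Euclidean distance. Your write-up is a more carefully spelled-out version of the same specialization; the only extra ingredient you add---unioning the per-equilibrium neighborhoods $\Omega_{(\bp,\bq)}$ into a single neighborhood of $\saddle(L)$---is a harmless refinement the paper leaves implicit.
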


\begin{tcolorbox}
In the following main result of our work, we show that strict convexity or concavity in $L(\cdot,\cdot)$, for at least one of its arguments, suffices to yield a convergence result to a Von Neumann's $\saddle(L)$ starting from a safe initial condition. In order to get convergence results for any safe initialization, we need to study the region of attraction of the set $Z\subset$ Solution($L$).
We refine the estimation of the region of attraction as proposed in \Cref{lemma:restate-las} by analyzing the behavior of the level sets of $H$.
More precisely, we show that the proposed Lyapunov function 
$$    H(\bF,\bG) = \sum_{i=1}^{N} \int_{p_{i}}^{f_{i}} \frac{z-p_{i}}{\norm{\nabla f_{i}(X_{\btheta_{i}(0)}(z))}^2} \mathrm{d}z + \sum_{j=1}^{M} \int_{q_{j}}^{g_{j}} \frac{z-q_{j}}{\norm{\nabla g_{j}(X_{\bphi_{j}(0)}(z))}^2} \mathrm{d}z
$$ is radially unbounded. In other words, while the operators converges to their limit values (supremum/infimum of their domain) $H\to +\infty$.
In order to show that we analyze the asymptotic behavior of $\int_{c}^{\mathcal{F}}\frac{1}{\norm{\nabla f_{i}}^2}$, while $\mathcal{F}\to\sup f_i$. Hence,
\begin{enumerate}
    \item[A)] \Cref{th:lasalle} implies that the trajectory will approach the set of  stationary points of $H$ or equivalently a set of Von Neumann's $\saddle(L)$.
    \item[B)] The stability of $\saddle(L)$  and \Cref{th:pointwise}, leads to the conclusion that the trajectory will 
    converges to a specific point of $\saddle(L)$.
\end{enumerate}
\end{tcolorbox}
\clearpage
\begin{reptheorem}{th:strict-convergence}{th:strict-convergence:restated}

\end{reptheorem}
\begin{proof}
Again let's pick a point $(\bp, \bq) \in Z$. Since our initialization is safe for this saddle point, we can construct the $H$ function as in \cref{th:th_stable:restate} and prove that it has the following property
\begin{align*}
    \dot{H} \leq 0 \text{ in } D = \{ \range{f_i}{\btheta_i(0)} \}_{i=1}^N \times \{ \range{g_j}{\bphi_j(0)} \}_{j=1}^M
\end{align*}
If $(\bF(\btheta(0)),\bG(\bphi(0)))= (\bp, \bq)$ then the theorem holds trivially. Otherwise define
\begin{align*}
    H_0 &= H(\bF(\btheta(0)),\bG(\bphi(0)))\\
    \Omega &= \{ (\bF, \bG) \in D | H(\bF, \bG) \leq H_0 \}
\end{align*}
where we know that $H_0 > 0$ from \cref{th:th_stable:restate}.  
Let us assume that indeed $\Omega$ is in the interior of $D$. Then, applying the same argumentation as in \Cref{lemma:restate-las} combined with \cref{th:th_stable:restate}, all fixed points in $Z$ are stable. So applying \cref{th:pointwise} we get that the trajectory initialized at $(\bF(\btheta(0)),\bG(\bphi(0))) \in \Omega$ converges to a point in $Z$.
It remains to prove our assertion about the set $\Omega$:
\begin{claim}\label{claim:omega:interior}
$\Omega$ is in the interior of $D$.
\end{claim}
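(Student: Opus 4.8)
The plan is to establish the stronger statement that $\overline{\Omega}$ is compact and $\overline{\Omega}\subseteq D$ — this is what is actually needed downstream, since it makes $\Omega$ a bounded, positively invariant set contained in the open box $D=\prod_{i}\range{f_i}{\btheta_i(0)}\times\prod_{j}\range{g_j}{\bphi_j(0)}$ on which one may run \Cref{th:lasalle} and \Cref{th:pointwise} from the concrete initialization $(\bF(\btheta(0)),\bG(\bphi(0)))\in\Omega$. First I would record the shape of $D$: by \Cref{definition:range} and the construction in \Cref{theorem:reparametrization}, for a safe initialization each $\range{f_i}{\btheta_i(0)}$ is the image of the strictly increasing map $A_{\btheta_i(0)}\colon t\mapsto f_i(\gamma_{\btheta_i(0)}(t))$ over $t\in\mathbb{R}$, hence an \emph{open} interval $(a_i,b_i)$ with $-\infty\le a_i<b_i\le+\infty$; by \Cref{definition:safe-conditions}, $a_i<p_i<b_i$, and symmetrically each $q_j$ lies strictly inside $\range{g_j}{\bphi_j(0)}$, so $(\bp,\bq)$ is interior to $D$.

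The crux is to show that $H$ is \emph{proper on $D$}, i.e.\ that each of its summands tends to $+\infty$ as its argument approaches either endpoint of the corresponding range interval. Fix $i$ and write $u_i(\mathcal{F})=\int_{p_i}^{\mathcal{F}}\tfrac{z-p_i}{\norm{\nabla f_i(X_{\btheta_i(0)}(z))}^2}\,\mathrm{d}z$ for the $i$-th term. From \Cref{theorem:reparametrization} we have $X_{\btheta_i(0)}=\gamma_{\btheta_i(0)}\circ A_{\btheta_i(0)}^{-1}$ with $A_{\btheta_i(0)}'(t)=\norm{\nabla f_i(\gamma_{\btheta_i(0)}(t))}^2$, so the substitution $z=A_{\btheta_i(0)}(t)$ rewrites $\int_c^{\mathcal{F}}\norm{\nabla f_i(X_{\btheta_i(0)}(z))}^{-2}\,\mathrm{d}z$ as $A_{\btheta_i(0)}^{-1}(\mathcal{F})-A_{\btheta_i(0)}^{-1}(c)$, namely the gradient-ascent time elapsed between the levels $f_i=c$ and $f_i=\mathcal{F}$. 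Choosing $c_0\in(p_i,b_i)$ and using $z-p_i\ge c_0-p_i>0$ on $[c_0,\mathcal{F}]$, I would deduce
\[
  u_i(\mathcal{F})\ \ge\ (c_0-p_i)\,\bigl(A_{\btheta_i(0)}^{-1}(\mathcal{F})-A_{\btheta_i(0)}^{-1}(c_0)\bigr),\qquad \mathcal{F}\in(c_0,b_i),
\]
and since $\gamma_{\btheta_i(0)}$ exists for all $t\in\mathbb{R}$ while $A_{\btheta_i(0)}(t)\uparrow b_i$ as $t\to+\infty$, the right side diverges as $\mathcal{F}\to b_i^-$. A mirror estimate with $c_0'\in(a_i,p_i)$ and $A_{\btheta_i(0)}^{-1}(\mathcal{F})\to-\infty$ as $\mathcal{F}\to a_i^+$ gives $u_i(\mathcal{F})\to+\infty$ at the left endpoint, and the $g_j$-terms are handled identically.

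Granting properness, I would finish as follows. Each $u_i$ is continuous and nonnegative on $(a_i,b_i)$, vanishes at $p_i$, and blows up at both endpoints, so $C_i:=\{\mathcal{F}\in(a_i,b_i):u_i(\mathcal{F})\le H_0\}$ is nonempty (as $H_0>0$), closed in $(a_i,b_i)$, and contained in a compact subinterval of $(a_i,b_i)$ (since $u_i>H_0$ near both endpoints), hence compact; define compact $C_j'\subset\range{g_j}{\bphi_j(0)}$ from the $g_j$-terms analogously. Because every summand of $H$ is nonnegative, $(\bF,\bG)\in\Omega$ forces $u_i(f_i)\le H(\bF,\bG)\le H_0$ for each $i$ (and likewise for each $g_j$), so $\Omega\subseteq K:=\prod_i C_i\times\prod_j C_j'$. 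The set $K$ is a finite product of compacts, hence compact, and $K\subset D$; thus $\overline{\Omega}\subseteq K\subset D$, and since $H$ is continuous on $D$ every limit point of $\Omega$ already lies in $\Omega$, so $\Omega=\overline{\Omega}$ is compact and contained in $D$. In particular $\Omega\subseteq\operatorname{int} D$, proving the claim.

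I expect the properness estimate of the second paragraph to be the main obstacle. The tempting shortcut — bounding $\norm{\nabla f_i}$ away from $0$ near the boundary of $\range{f_i}{\btheta_i(0)}$ — is false in general (for sigmoid-type operators the gradient \emph{vanishes} at the endpoints), so one must instead exploit the reparametrization of \Cref{theorem:reparametrization} to reinterpret $\int\norm{\nabla f_i}^{-2}$ as elapsed gradient-flow time, and then use completeness of that flow (implicit in the well-posedness invoked throughout when speaking of ``the unique solution'' of the gradient dynamics) to conclude that this time, and hence $u_i$, diverges at the endpoints.
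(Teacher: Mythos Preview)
Your proposal is correct and follows essentially the same route as the paper: both arguments reduce the claim to showing that each summand of $H$ blows up at the endpoints of its range interval by interpreting $\int \|\nabla f_i(X_{\btheta_i(0)}(z))\|^{-2}\,\mathrm{d}z$ as elapsed gradient-ascent time (the paper writes this as $\dot a=1$ along the flow, you write it via the substitution $z=A_{\btheta_i(0)}(t)$), and then use that the flow runs for all $t\in\mathbb{R}$ to conclude divergence. Your finishing step---packaging the conclusion as $\Omega\subseteq K=\prod_i C_i\times\prod_j C_j'$ with each $C_i$ a compact subinterval of the open range---is more explicit than the paper's one-line ``$\Omega$ should have no points close to the boundary,'' but the content is the same.
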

\begin{proof}
We will argue that as $(\bF, \bG)$ approaches the boundary of $D$, the value of $H$ should become unbounded. If this is true then for the finite upper bound of $H_0$, $\Omega$ should have no points close to the boundary of $H$ and thus it should be in the interior. 

As $(\bF, \bG)$ approach the boundary of $D$, at least one of the variables $f_i$ or $g_j$ approaches the endpoints points of $\range{f_i}{\btheta_i(0)}$ or $\range{g_j}{\bphi_j(0)}$ respectively. We will study the case of $f_i$ since the case of $g_j$ is symmetrical. The endpoint $f_{is}$ can be either the supremum or the infimum of the gradient ascent trajectory on $f_i$ or $\pm \infty$ if they do not exist. Let $f_{is}$ be the supremum or $\infty$ depending on if the former exists. We can take the gradient ascent dynamics and apply \Cref{lemma:restate:lemma_reparam} to get
\begin{align*}
    \dot{f}_i &=  \norm{\nabla_{\btheta_i} f_i(X_{\btheta_i(0)}(f_i))}^2
\end{align*}
We know that $f_i(\btheta_i(t))$ goes to $f_{is}$ when initialized at $f_i(\btheta_i(0))$. Let us define the following function
\begin{equation*}
    a(f_i) = \int_{p_i}^{f_i} \frac{1}{\norm{\nabla f_{i}(X_{\btheta_{i}(0)}(z))}^2} \mathrm{d}z
\end{equation*}
Observe that $\dot{a} = 1$, thus $\lim_{t \to \infty} a(f_i(t)) = \infty$. In other words
\begin{equation*}
    \lim_{t \to \infty} \int_{p_i}^{f_i(t)} \frac{1}{\norm{\nabla f_{i}(X_{\btheta_{i}(0)}(z))}^2} \mathrm{d}z = \int_{p_i}^{f_{is}} \frac{1}{\norm{\nabla f_{i}(X_{\btheta_{i}(0)}(z))}^2} \mathrm{d}z = \infty
\end{equation*}
Symmetrically if $f_{is}$ is the infimum or $-\infty$, then the limit above would be $-\infty$. In either case
\begin{equation*}
    f_i \to f_{is} \implies \int_{p_i}^{f_i} \frac{z-p_i}{\norm{\nabla f_{i}(X_{\btheta_{i}(0)}(z))}^2} \mathrm{d}z \to \infty
\end{equation*}
For the last step it is important to note that $p_i$ is not at the boundary of $D$ based on the safety conditions. Therefore as $(\bF, \bG)$ approach the boundary of $D$ in the dynamics of \cref{eq:gda-transformed}, at least one of the terms of $H$ goes to infinity. Also note that all the terms of $H$ are individually non-negative so no matter what the other variables in $(\bF, \bG)$ are doing they cannot stop $H \to \infty$.
\end{proof}
\end{proof}
Again, a special case of the above result is the standard convex-concave games:
\begin{corollary}\label{coro:non-hidden-strict-convex-concave}
Let $L(\pmb{x},\pmb{y})$ be strictly convex concave and $\saddle(L)$ is the non empty set of equilbria of $L$. 
Under the continuous GDA dynamics  $(\pmb{x}(t),\pmb{y}(t))$ converges to a point in $\saddle(L)$ as $t \to \infty$.
\end{corollary}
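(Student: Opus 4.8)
The plan is to derive this statement as the special case $\bF = \mathrm{id}_{\mathbb{R}^n}$, $\bG = \mathrm{id}_{\mathbb{R}^m}$ of \Cref{th:strict-convergence}, in exactly the way \Cref{coro:non-hidden-lemma-las} specialized the hidden-game result. Concretely I would take $n_i = m_j = 1$ for every $i,j$, so that each operator coordinate is $f_i(\theta_i)=\theta_i$ and $g_j(\phi_j)=\phi_j$; then $N=n$, $M=m$, and the GDA system \cref{eq:gda} becomes ordinary gradient descent--ascent on $L$, and $(\bF(\btheta(t)),\bG(\bphi(t)))=(\mathbf{x}(t),\mathbf{y}(t))$.

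The first thing to check is that, for identity operators, \emph{every} initialization $(\btheta(0),\bphi(0))$ is safe for \emph{every} $(\bp,\bq)\in\saddle(L)$. Indeed $\nabla f_i\equiv 1\neq\bzero$ and $\nabla g_j\equiv 1\neq\bzero$, so no point is stationary; and the gradient-ascent flow of $f_i$ from any $\theta_i(0)$ is $t\mapsto\theta_i(0)+t$, defined for all $t\in\mathbb{R}$, hence $\range{f_i}{\theta_i(0)}=\mathbb{R}\ni p_i$, and symmetrically $q_j\in\range{g_j}{\phi_j(0)}=\mathbb{R}$. So in the notation of \Cref{th:strict-convergence} the set $Z$ of equilibria for which the initialization is safe is all of $\saddle(L)$, which is nonempty by hypothesis, and the theorem applies to any $(\btheta(0),\bphi(0))$.

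Second, I would observe that $\norm{\nabla f_i}^2=\norm{\nabla g_j}^2=1$ collapses the initialization-dependent Lyapunov function of \cref{eq:lyuapunov-hamiltonian} to the plain squared Euclidean distance
\begin{equation*}
H(\mathbf{x},\mathbf{y}) = \sum_{i}\int_{p_i}^{x_i}(z-p_i)\,\mathrm{d}z + \sum_{j}\int_{q_j}^{y_j}(z-q_j)\,\mathrm{d}z = \tfrac{1}{2}\norm{\mathbf{x}-\bp}^2 + \tfrac{1}{2}\norm{\mathbf{y}-\bq}^2 ,
\end{equation*}
so the entire pipeline behind \Cref{th:strict-convergence} --- $\dot H\le 0$ from convexity-concavity, compact forward-invariant sublevel sets, LaSalle's invariance principle pinning the limit set inside $\saddle(L)$ via strict convexity or concavity, and \Cref{th:pointwise} upgrading set-convergence to convergence to a single point --- reduces to the textbook proof of last-iterate convergence of continuous GDA in convex-concave games. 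Invoking \Cref{th:strict-convergence} then yields that $(\mathbf{x}(t),\mathbf{y}(t))$ converges to a point of $\saddle(L)$.

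There is essentially no obstacle here beyond bookkeeping: the abstract safety hypothesis is vacuous for identity operators, and the one genuinely nontrivial ingredient of \Cref{th:strict-convergence}, namely radial unboundedness of $H$ on $D$ (\Cref{claim:omega:interior}), is immediate because $H$ is literally $\tfrac{1}{2}\norm{\cdot}^2$ and $D=\mathbb{R}^n\times\mathbb{R}^m$. If one preferred a self-contained argument to citing \Cref{th:strict-convergence}, the same three steps --- Lyapunov, LaSalle, pointwise convergence --- can be written out directly in a few lines with the explicit quadratic $H$.
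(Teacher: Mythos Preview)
Your proposal is correct and follows exactly the paper's approach: the corollary is stated in the paper immediately after \Cref{th:strict-convergence} with the remark ``Again, a special case of the above result is the standard convex-concave games,'' and the earlier \Cref{coro:non-hidden-lemma-las} already spells out precisely the two observations you make (identity maps render every initialization safe, and the Lyapunov function collapses to the squared Euclidean distance). You have simply written out the bookkeeping that the paper leaves implicit.
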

\clearpage
\subsubsection{Connections to Hamiltonian Descent}
\label{sec:appendix:hamiltonian}
In GANs numerous learning heuristics are being tested and explored. One technique that has particular interesting theoretical justification as well as practical performance is Hamiltonian Gradient Descent (HGD). Understanding the convergence guarantees for HGD is an open research question \cite{hamilton1,hgd2018,hamitlon3}. We provide some new justification about its success in GANs by provably establishing convergence of a modified version of HGD in a relatively simple but illustrative subclass of hidden convex concave games, namely 2x2 hidden bi-linear games. This class of games is fairly expressive. Despite the restriction of planar bi-linear competition in the output space, the hidden game can have an arbitrary number of variables in the parameter space. It's important to note that given the  bi-linear nature of competition, the classical GDA dynamics cycles instead of converging to the equilibrium as shown in \cite{vlatakis2019poincare}

More precisely, in the hidden 2x2 bi-linear game presented in \cite{vlatakis2019poincare}, we have two functions $f: \mathbb{R}^N \to [0,1]$ and $g: \mathbb{R}^M \to [0,1]$ and two constants $(p,q) \in (0,1)^2$ where $(p,q)$ is the fully mixed equilibrium of the bi-linear game. Without loss of generality, we are interested in solving the following problem
\begin{equation*} 
\min_{\btheta \in \mathbb{R}^M} \max_{\bphi \in \mathbb{R}^N} (f(\btheta) - p)(g(\bphi) - q) 
\end{equation*}
Defining $L(\btheta, \bphi) = (f(\btheta) - p)(g(\bphi) - q)$,  the dynamics of HGD are:
\begin{equation} \label{eq:original-hgd}
    \begin{aligned}
    \dot{\btheta}& =  - \frac{1}{2}\nabla_{\btheta} \norm{\nabla_{\bphi}L(\btheta, \bphi)}^2 - \frac{1}{2}\nabla_{\btheta} \norm{\nabla_{\btheta}L(\btheta, \bphi)}^2\\
    \dot{\bphi} &=  - \frac{1}{2}\nabla_{\bphi} \norm{\nabla_{\btheta}L(\btheta, \bphi)}^2  - \frac{1}{2}\nabla_{\bphi} \norm{\nabla_{\bphi}L(\btheta, \bphi)}^2
    \end{aligned}
\end{equation}
Observe that the second term of each right hand side would be zero in a classical bi-linear game but involves second order derivatives of $f$ and $g$ in the case of hidden bi-linear games. To circumvent the complexities of the second order derivatives and mimic the classical bi-linear game we will study a modified version of \cref{eq:original-hgd}, namely: 
\begin{equation}\label{eq:hamiltonian}
    \begin{aligned}
        \dot{\btheta}& =  - \frac{1}{2}\nabla_{\btheta} \norm{\nabla_{\bphi}L(\btheta, \bphi)}^2  \quad \quad
        \dot{\bphi} &=  - \frac{1}{2}\nabla_{\bphi} \norm{\nabla_{\btheta}L(\btheta, \bphi)}^2  
    \end{aligned}
\end{equation}

Employing an analysis similar to the one in \cref{sec:strict}, we get the following convergence result: 
\begin{theorem}\label{th:hamiltonian:main}
Let $(\btheta(0),\bphi(0))$ be safe for $(p,q)$. Then $(f(\btheta(t)), g(\bphi(t)))$  converges to $(p,q)$ under the dynamics of \cref{eq:hamiltonian}.
\end{theorem}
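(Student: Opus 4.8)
The plan is to pass to the induced one-dimensional dynamics of the operator values $f(t)=f(\btheta(t))$ and $g(t)=g(\bphi(t))$ and then run a LaSalle-type argument with the squared distance to $(p,q)$. First I would expand the right-hand sides of \cref{eq:hamiltonian}. Since $\nabla_{\bphi}L(\btheta,\bphi)=(f(\btheta)-p)\nabla g(\bphi)$ and $\nabla_{\btheta}L(\btheta,\bphi)=(g(\bphi)-q)\nabla f(\btheta)$, we get $\norm{\nabla_{\bphi}L}^2=(f(\btheta)-p)^2\norm{\nabla g(\bphi)}^2$ and $\norm{\nabla_{\btheta}L}^2=(g(\bphi)-q)^2\norm{\nabla f(\btheta)}^2$, so \cref{eq:hamiltonian} becomes
\[
\dot{\btheta}=-(f(\btheta)-p)\,\norm{\nabla g(\bphi)}^2\,\nabla f(\btheta),\qquad \dot{\bphi}=-(g(\bphi)-q)\,\norm{\nabla f(\btheta)}^2\,\nabla g(\bphi).
\]
Each equation has the form ``scalar multiplier times the gradient of $f$ (resp.\ $g$)'', so \Cref{lemma:reparametrization} and the construction in the proof of \Cref{theorem:reparametrization} apply unchanged: there are $C^1$ maps $X_{\btheta(0)},X_{\bphi(0)}$ with $\btheta(t)=X_{\btheta(0)}(f(t))$ and $\bphi(t)=X_{\bphi(0)}(g(t))$, and writing $A(f)=\norm{\nabla f(X_{\btheta(0)}(f))}^2$, $B(g)=\norm{\nabla g(X_{\bphi(0)}(g))}^2$, the operator values satisfy the autonomous system
\begin{equation*}\dot f=-(f-p)\,A(f)\,B(g),\qquad \dot g=-(g-q)\,A(f)\,B(g).\end{equation*}

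Next I would extract the two structural consequences of safety. Because $f$ is strictly increasing along the non-constant gradient-ascent trajectory of $f$ from $\btheta(0)$, the set $\range{f}{\btheta(0)}$ is an open interval containing $f(\btheta(0))$ in its interior, and by safety it also contains $p$ in its interior; symmetrically for $g$ and $q$. Moreover $\tfrac{d}{dt}(f-p)^2=-2(f-p)^2 A(f)B(g)\le 0$ and likewise $\tfrac{d}{dt}(g-q)^2\le 0$, so $(f(t),g(t))$ stays in the compact box $I_f\times I_g$, where $I_f=[\min\{f(0),p\},\max\{f(0),p\}]$ and $I_g=[\min\{g(0),q\},\max\{g(0),q\}]$, and this box lies strictly inside the interior of $\range{f}{\btheta(0)}\times\range{g}{\bphi(0)}$. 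On this box $A$ and $B$ are continuous and strictly positive — no interior value of a range is a stationary value, since stationary points are approached only at the endpoints — hence bounded above and below by positive constants. This yields global existence of the trajectory and, since $f-p$ and $g-q$ cannot change sign, monotone convergence $f(t)\to f_\infty\in I_f$ and $g(t)\to g_\infty\in I_g$.

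Finally I would close the argument with $V=\tfrac12(f-p)^2+\tfrac12(g-q)^2$, for which $\dot V=-2A(f)B(g)\,V\le 0$. The sublevel set $\Omega=(I_f\times I_g)\cap\{V\le V(0)\}$ is compact, positively invariant, and contained in the interior where $A,B>0$; hence $\{\dot V=0\}\cap\Omega=\{V=0\}=\{(p,q)\}$, whose largest invariant subset is itself, so \Cref{th:lasalle} gives $(f(t),g(t))\to(p,q)$. (Equivalently, integrating $\dot V=-2A(f)B(g)V$ gives $V(t)=V(0)\exp\!\big(-2\int_0^t A(f(s))B(g(s))\,ds\big)$, and $A(f_\infty)B(g_\infty)>0$ forces the integral to diverge, so $V(t)\to 0$.) I expect the main obstacle to be the middle step: showing, from safety and the monotonicity of $(f-p)^2$ and $(g-q)^2$ alone, that the operator trajectory is trapped in a compact region on which the gradient-norm factors $A(f),B(g)$ are bounded away from zero — this is the analogue of the ``$\Omega$ lies in the interior of $D$'' step of \Cref{th:strict-convergence}, and it is precisely what prevents the coupling $A(f(t))B(g(t))$ from decaying so quickly that $(f,g)$ stalls before reaching $(p,q)$. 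The degenerate cases $f(0)=p$ or $g(0)=q$, where one operator is frozen, follow from the same computation with the frozen factor replaced by its constant positive value.
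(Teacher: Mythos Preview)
Your argument is correct and in fact more elementary than the paper's. The paper reuses its integral-based Lyapunov function
\[
H(f,g)=\int_{p}^{f}\frac{z-p}{\norm{\nabla f(X_{\btheta(0)}(z))}^2}\,\mathrm{d}z+\int_{q}^{g}\frac{z-q}{\norm{\nabla g(X_{\bphi(0)}(z))}^2}\,\mathrm{d}z,
\]
computes $\dot H=-(f-p)^2B(g)-(g-q)^2A(f)\le 0$, and then appeals to the boundary analysis of \Cref{th:strict-convergence} (the ``$\Omega$ is in the interior of $D$'' step) to obtain a compact positively-invariant sublevel set on which LaSalle applies. You instead exploit a structural feature specific to the modified Hamiltonian dynamics that is \emph{not} available for GDA: here $\dot f$ and $\dot g$ share the common positive factor $A(f)B(g)$ and are proportional to $-(f-p)$ and $-(g-q)$ respectively, so each of $(f-p)^2$ and $(g-q)^2$ is separately non-increasing and $f-p$, $g-q$ cannot change sign. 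This immediately traps the trajectory in the closed box $I_f\times I_g\subset \range{f}{\btheta(0)}\times\range{g}{\bphi(0)}$, on which $A,B$ are continuous and strictly positive; the Euclidean $V=\tfrac12(f-p)^2+\tfrac12(g-q)^2$ then satisfies $\dot V=-2A(f)B(g)\,V$, and either LaSalle or direct integration (yielding exponential decay, a small bonus over the paper) finishes the proof. The trade-off is that the paper's $H$-based route plugs into the general machinery already built for GDA and reuses \Cref{claim:omega:interior} verbatim, whereas your route is shorter but bespoke to this decoupled structure and would not extend to the general hidden strictly convex--concave setting where the Euclidean distance need not be monotone.
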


\begin{proof}
Simple substitution gives us
\begin{align*}
    \dot{\btheta} &=  - \nabla_{\btheta} f(\btheta) \norm{\nabla_{\bphi}
    g(\bphi)}^2(f(\btheta) - p)\\
     \dot{\bphi} &=  - \nabla_{\bphi} g(\bphi) \norm{\nabla_{\btheta}
    f(\btheta)}^2(g(\bphi) - q)
\end{align*}
Applying \cref{lemma:restate:lemma_reparam} and following the same steps as before
\begin{align*}
    \dot{f} &= - \norm{\nabla_{\btheta} f(X_{\btheta(0)}(f))}^2  \norm{\nabla_{\bphi} g(X_{\bphi(0)} (g))}^2(f - p) \\
    \dot{g} &= -  \norm{\nabla_{\bphi} g(X_{\bphi(0)} (g) )}^2  \norm{\nabla_{\btheta}
    f(X_{\bphi(0)} (f))}^2(g - q) 
\end{align*}
Once again we consider the function
\begin{align*}
     H(f,g) &= \int_{p}^{f} \frac{z-p}{\norm{\nabla f(X_{\btheta(0)}(z))}^2} \mathrm{d}z + \int_{q}^{g} \frac{z-q}{\norm{\nabla g(X_{\bphi(0)}(z))}^2} \mathrm{d}z
\end{align*}
Simple substitution gives
\begin{equation*}
    \dot{H} = - (f-p) \left(\norm{\nabla_{\bphi} g(X_{\bphi(0)} (g))}^2(f - p) \right) -  (g-q) \left( \norm{\nabla_{\btheta}
    f(X_{\bphi(0)} (f))}^2(g - q)\right)
\end{equation*}
A little bit of reorganization gives
\begin{equation*}
    \dot{H} = - (f-p)^2 \norm{\nabla_{\bphi} g(X_{\bphi(0)} (g))}^2 - (g-q)^2 \norm{\nabla_{\btheta} f(X_{\btheta(0)} (f))}^2 \leq 0 
\end{equation*}
Thus, we get 
\begin{align*}
    \dot{H} \leq 0 \text{ in } D = \range{f}{\btheta(0)} \times \range{g}{\bphi(0)}
\end{align*}

Similarly with the strict convex analysis of the previous section,
if $(f(\btheta(0)),g(\bphi(0)))= (p, q)$ then the theorem holds trivially. Otherwise define
\begin{align*}
    H_0 &= H(f(\btheta(0)),g(\bphi(0)))\\
    \Omega &= \{ (f, g) \in D | H(f, g) \leq H_0 \}
\end{align*}
where we know that $H_0 > 0$ from \cref{th:th_stable:restate}. Additionally, we can apply \cref{claim:omega:interior} even in the new dynamics, so $\Omega$ is in the interior of $D$. Since $\dot{H} \leq 0$, starting in $\Omega$, it implies that $H(f(t), g(t)) \leq H_0$ for $t \geq 0$, so $f(t), g(t)$ stays in $\Omega$.
Additionally, $\Omega$ is closed since it is a sublevel set of a continuous function. Notice that the restriction of $\Omega$ on $D$ does not affect the above properties since $\Omega$ is in the interior of $D$. Thus $\Omega$ is a compact forward invariant set.

For a safe initialization $(\btheta(0),\bphi(0)$, both $\norm{\nabla_{\bphi} g(X_{\bphi(0)} (g(t)))}, \norm{\nabla_{\btheta} f(X_{\btheta(0)} (f(t)))}$ cannot go to 0 as this happens only at the boundaries of $D$ which are outside $\Omega$. So $\dot{H}=0$ only at $(p,q)$ in $\Omega$.

Therefore, applying \cref{th:lasalle}, we get that $(f(\btheta(t)), g(\bphi(t)))$ converges to $(p,q)$

\end{proof}

\subsection{Regularization and convergence}
\begin{tcolorbox}
In this section, we show that even in the absence of strict convexity/concavity for both of the operators, it is possible to achieve a positive convergence result by sacrificing the exactness of a targeted equilibrium. In other words, we prove that by adding a small regularization term, the new utility function becomes strictly convex strictly concave. Beside the guaranteed convergence of the ``perturbed" $L^\prime$, we can always choose sufficiently small magnitude of regularization such that the new equilibria are arbitrarily close to the initial ones.
\end{tcolorbox}
\begin{reptheorem}{th:regularization:main}{th:regularization:main:restated}
 
\end{reptheorem}
\begin{proof}
For any choice of $\lambda>0$ we have that $L'$ is strictly convex strictly concave so the KKT conditions are sufficient to determine its equilibria.
\begin{align*}
    \frac{\partial L(\bx, \by)}{\partial x_i}  + \lambda x_i = 0 \\
    \frac{\partial L(\bx, \by)}{\partial y_j}  - \lambda y_j =  0
\end{align*}
We can view the above set of constraints as a single vector constraint $r(\lambda, \bx, \by) = \bzero$. Note that by assumption of the Hessians being invertible at all equilibria, $L$ has a unique equilibrium $(\bx^*, \by^*)$. Clearly we have that $r(0,\bx^*, \by^*) = \bzero$. Observe that for the Jacobian of $r$ at $(0, \bx^*, \by^*)$ with respect to $(\bx$, $\by)$ we have that 
\begin{equation*}
    \mathrm{D}_{(\bx,\by)}r(0,\bx^*, \by^*) = \nabla^2 L(\bx^*, \by^*) 
\end{equation*}
and thus it is invertible. Invoking the Implicit function Theorem, there is a differentiable function $g$, defined in a small enough neighborhood of $0$, that takes a $\lambda$ and returns $g(\lambda) = (\bx(\lambda), \by(\lambda))$  such that $r(\lambda, g(\lambda)) = \bzero$. Thus for a small enough $\lambda$, we have that $g$ returns the corresponding equilibria of $L'$. By continuity of $g$, for all $\epsilon$ there is a $\delta >0$
\begin{equation*}
    \forall 0 < \lambda < \delta : \norm{\bx(\lambda) - \bx(0)}^2 + \norm{\by(\lambda) - \by(0)}^2 \leq \epsilon^2
\end{equation*}
But $(\bx(0), \by(0)) = (\bx^*, \by^*)$ so the equilbrium of $L'$ has an $\epsilon$-close equilibrium of $L$ for $\lambda < \delta$. By strict convexity strict concavity of $L'$, it has a unique equilibrium as well. So the equilibria of $L'$ and $L$ are $\epsilon$-close to each other. 
\end{proof}

\begin{tcolorbox}
The previous theorem highlights that small values of $\lambda$ induce only small changes to the equilibria of the hidden game. As is the case for classical convex concave games, larger values of $\lambda$ lead to (exponentially) faster convergence. To prove this for HCC games, we provide a detailed upper and lower bound analysis of the gradients of $f_i$ and $g_j$.  
\end{tcolorbox}
\begin{reptheorem}{th:regularization:rate}{th:regularization:rate:stater}

\end{reptheorem}
\begin{proof}
Following the same analysis with the strict convex concave analysis of the previous section, if $(\bF(\btheta(0)),\bG(\bphi(0)))= (\bp, \bq)$ then the theorem holds trivially. Otherwise, since our initialization is safe for $(\bp, \bq)$, we can construct the $H$ function as in \cref{th:th_stable:restate} and prove that it has the following property in $D = \{ \range{f_i}{\btheta_i(0)} \}_{i=1}^N \times \{ \range{g_j}{\bphi_j(0)} \}_{j=1}^M$
\begin{align*}
    \dot{H} &\leq L'(\bp, \bG) - L'(\bp,\bq) + L'(\bp,\bq) - L'(\bF, \bq)\\
    &\leq -\frac{\lambda}{2}\left( \norm{\bF(\btheta(t))-\bp}^2 + \norm{\bG(\bphi(t))-\bq}^2\right) \\
    &\leq -\frac{\lambda}{2} r(t)
\end{align*}
Where the second step follows from $L'(\bp, \cdot)$ being $\lambda$ strongly concave and $L'(\cdot, \bq)$ being $\lambda$ strongly convex and $\bq$, $\bp$ being the corresponding optima of these functions since $(\bp,\bq)$ is an equilibrium. Let us define
\begin{align*}
    H_0 &= H(\bF(\btheta(0)),\bG(\bphi(0)))\\
    \Omega &= \{ (\bF, \bG) \in D | H(\bF, \bG) \leq H_0 \}
\end{align*}
where we know that $H_0 > 0$ from \cref{th:th_stable:restate}. Additionally, we can apply \cref{claim:omega:interior} even in the new dynamics, so $\Omega$ is in the interior of $D$. Since $\dot{H} \leq 0$, starting in $\Omega$, it implies that $H(\bF(\btheta(t)), \bG(\bphi(t))) \leq H_0$ for $t \geq 0$, so $(\bF(t), \bG(t))$ stays in $\Omega$. Additionally, $\Omega$ is closed since it is a sublevel set of a continuous function. Notice that the restriction of $\Omega$ on $D$ does not affect the above properties since $\Omega$ is in the interior of $D$. Thus $\Omega$ is a compact forward invariant set.

For a safe initialization $(\btheta(0),\bphi(0))$, the following continuous functions must have a minimum and maximum value on $\Omega$ respectively.
\begin{align*}
   K_{f_i} &\geq  \norm{\nabla f_i(X_{\btheta_i(0)}(\cdot))}^2 \geq \kappa_{f_i} \\
   K_{g_j} &\geq \norm{\nabla g_j(X_{\bphi_j(0)}(\cdot))}^2 \geq \kappa_{g_j}
\end{align*}
Observe that the minima  and maxima must be all greater than zero , since both $\norm{\nabla_{\bphi_j} g_j(X_{\bphi_j(0)} (g(t)))}, \norm{\nabla_{\btheta_i} f_i(X_{\btheta_i(0)} (f(t)))}$ cannot go to 0 as this happens only at the boundaries of $D$ which are outside $\Omega$.

 Let us define
\begin{align*}
    \kappa & = \min \{ \min_{1 \leq i \leq n} \kappa_{f_i},  \min_{1 \leq j \leq m} \kappa_{g_j} \}\\
    K & = \max \{ \max_{1 \leq i \leq n} K_{f_i},  \max_{1 \leq j \leq m} K_{g_j} \}
\end{align*}

Observe that $K \geq \norm{\nabla f_i(X_{\btheta_{i}(0)}(\cdot))}^2 \geq \kappa$ in this interval.  Thus we can write
\begin{equation*}
    \frac{(f_i(\btheta_i(t)) - p_i)^2}{2\kappa} \geq \int_{p_{i}}^{f_{i}(\btheta_i(t))} \frac{z-p_{i}}{\norm{\nabla f_{i}(X_{\btheta_{i}(0)}(z))}^2} \mathrm{d}z \geq \frac{(f_i(\btheta_i(t)) - p_i)^2}{2K}
\end{equation*}

Repeating the same argument for all $f_i$ and $g_j$ we have that
\begin{equation*}
    \frac{r(t)}{2\kappa}  \geq H(\bF(\btheta(t)),\bG(\bphi(t)))\geq\frac{r(t)}{2K}  
\end{equation*}

Thus we can extend our analysis
\begin{align*}
    \dot{H} &\leq -\lambda r(t)\le -\frac{2\kappa\lambda}{2} H(t)\Rightarrow
    H(t)\leq H_0 e^{-\lambda \kappa t}\Rightarrow
    r(t)\leq 2 \times K \times H_0 e^{-\lambda \kappa t}
\end{align*}

\end{proof}
\section{Applications}
\subsection{Connecting GANs and Hidden Convex-Concave  Games}
\label{sec:ganconnections}

At the heart of many GAN formulations like the standard GAN \cite{gan}, f-GAN \cite{fgan} and Wassertein GAN (WGAN) \cite{arjovsky2017wasserstein} lies a classical convex concave game in the operator output space. Indeed for the realizeable case \cite{gan} used the underlying convexity properties to find the Nash equilibria of standard GAN and \cite{gansmaynothavenash} did the same thing for the f-GAN and WGAN. Perhaps surprisingly, neither work references explicitly the convex concave nature of the operator output space game or von Neumann's minimax theorem. To highlight the significance of von Neumann equilibria as a solution concept for GANs, we show how  the optimal $G^*$ and $D^*$  can be derived separately from each other by solving the corresponding min-max (max-min) problems. This allows one to independently verify the validity of von Neumann's minimax theorem and its generalizations for GANs. We also extend our analysis to a wide class of non-realizeable cases as well. 

In practice however, as noted explicitly by \cite{DBLP:journals/corr/Goodfellow17}, the updates in GAN training happen in the parameter space giving rise to a HCC game. This has exactly motivated studying the learning dynamics of HCC games in \cref{sec:results}.   

Thus, in this section, we present these connections between Hidden Convex-Concave games and the different architectures of Generative Adversarial Networks.
More specifically, we start by exploring the structure of GANs and we verify their hidden convex-concave intrinsic form. 
\begin{enumerate}
    \item 
\textit{Under this scope of hidden games}, the strong (or even strict) convexity/concavity of at least one of the players (Discriminator/Generator) in combination with the convergence results of the following sections provide some theoretical explanation about the convergence properties of those architectures even under the vanilla Gradient Descent-Ascent Dynamics.
\item
To indicate the relation of Von-Neumann solution with this hidden model, we leverage this hidden convex-concave structure in order to compute the well-known both $\min\max$ and $\max\min$ optima of GANs under the realizability or not assumption.
The results of this section are summarized in the following table:
\end{enumerate}
\vspace{-0.1cm}
\begin{table}[h!]
    \centering
    \begin{tabular}{c c c c} 
        \midrule
        Type of GAN & $G^*$ & $D^*$ &  Hidden Structure \\ [0.5ex] 
        \midrule \midrule 
        {\color{darkgreen}GAN}  & $p_{\textrm{data}}$ & $\tfrac{1}{2}$ & Linear VS Strongly-Concave \\ [1.ex] 
        {\color{darkred}GAN}& $\argmin_{G\in \mathcal{G}}\mathrm{JSD} (p_{\textrm{data}}||p_{\textrm{G}}) $ & $\tfrac{p_{\textrm{data}}}{p_{\textrm{data}}+p_{\textrm{G}^*}}$  & Linear VS Strongly-Concave\\ [1.ex]
        \midrule \midrule 
         {\color{darkgreen}f-GAN}  & $p_{\textrm{data}}$ & $f'(1)$  & Linear VS Concave \\ [1.ex] 
        {\color{darkred}f-GAN}& $\argmin_{G\in \mathcal{G}}\mathrm{D}_f (p_{\textrm{data}}||p_{\textrm{G}}) $ & $f'\left(\tfrac{p_{\textrm{data}}}{p_{\textrm{G}^*}}\right)$  & Linear VS Concave \\ [1.ex] 
        \midrule \midrule 
        {\color{darkgreen}WGAN}  & $p_{\textrm{data}}$ & $c$ & Linear VS Linear \\ [1.ex] 
        {\color{darkred}WGAN}& $\argmin_{G\in \mathcal{G}}\mathrm{EMD} (p_{\textrm{data}}||p_{\textrm{G}}) $ & \textendash\  & Linear VS Linear \\ [1.ex] 
        \midrule \midrule 
    \end{tabular}
    \caption{$p_{\textrm{data}}$ represents the target data distribution. $G^*$ is the min-max generator and $D^*$ is the max-min discriminator. $\mathrm{JSD}$ denotes the Jensen–Shannon divergence, $\mathrm{D}_f$ the $f$-divergence for the convex function $f$ and $\mathrm{EMD}$ the earth mover distance and $c$ the constant discriminator. {\color{darkgreen}xGAN}, {\color{darkred}xGAN} correspond to the {\color{darkgreen} realizable } and the {\color{darkred}non-realizable case } accordingly. \textendash\ indicates the lack of a closed form solution for $D^*$ of {\color{darkred}WGAN}.}
\end{table}
\begin{tcolorbox}
In the following three subsections, we analyze both the derivation of $\argmin\max$ and $\argmax\min$ for the \textbf{``vanilla-GANs'', f-GANs, W-GANs} using min-max optimization arguments based on the \underline{Minimax Theorem for convex-concave functions}. More precisely,
\begin{enumerate}
    \item\label{item:opt-d}\underline{ In the \Cref{lemma:vanilla-opt-d,lemma:f-opt-d,lemma:wgan-opt-d}}, we present the optimal discriminators which
    consist the best-response for the case of a fixed generator. In all these maximization problems, typically each $D(x)$ is decoupled and $D_G^*(x)$
    is derived by the hidden concavity of the discriminator architecture.
    \item\label{item:opt-g}\underline{ In the \Cref{lemma:vanilla-opt-g,lemma:f-opt-g,lemma:wgan-opt-g}}, we present the optimal generators which
    consist the best-response for the case of a fixed discriminator. In all these minimization problems, typically the generator can cheat the fixed discriminator by producing greedily a distribution only over the restricted subset of the points for which the discriminator has the highest confidence about their originality.
    \item\label{item:opt-maxmin} \underline{In the \Cref{lemma:vanilla-opt-maxmin,lemma:f-opt-maxmin,lemma:wgan-opt-maxmin}}, we leverage lemmas of (\Cref{item:opt-d}) to understand the form GAN's utility function which corresponds typically to $\mathrm{JSD},f$-divergence and Wasserstein distance which donate their name to their GAN architecture as well. Thus, it is then trivial to show that $p_{\textrm{data}}$ is the optimal choice in the realizable case.
    \item\label{item:opt-minmax} \underline{In the \Cref{lemma:vanilla-opt-minmax,lemma:f-opt-minmax,lemma:wgan-opt-minmax}}, on the other side of the coin, we emphasize to derive the minmax solutions too. Our proof strategy invokes the partition to two basic sets, $S_{G_D^*}$ and $S_{G_D^*}^c$ ,the ``preferable'' or not data points by the generator. Leveraging the concavity part of the objective, we show that the best strategy for the discriminator is to label all the points uniformly with the same confidence in order to incentivize the generator to expands its support to the maximum possible.
    \item \underline{In the \Cref{lemma:vanilla-non-realizable,lemma:f-non-realizable}}, we analyze the non-realizable case. One the one hand using \Cref{item:opt-maxmin} we are able to compute the $\argmax\min$ generator $G^*$. To conclude about the $\argmin\max$ discriminators we apply the Von Neumann's Minimax theorem to prove $D^*=\textrm{Best-Response}(G^*)$.
\end{enumerate}
\end{tcolorbox}

\subsubsection{GAN}
The utility of the zero-sum game $V(G,D)$ for the distribution $p_{\textrm{data}}$ over the discrete set $\mathcal{N}$ is
\begin{equation*}
    V(G, D) = \sum_{x \in \mathcal{N}} p_{\textrm{data}}(x) \log(D(x)) + \sum_{x \in \mathcal{N}} p_{\textrm{G}}(x) \log(1-D(x))
\end{equation*}
On the one hand, it is easy to check that for a fixed discriminator $D$, the utility function is linear over the $p_{\textrm{G}}$ operator.
On the other hand, for a fixed generator $G$, the utility function is of the form $a\log(D)+b\log(1-D)$ which is strongly-concave. 

We start our work with the following lemmas
\begin{lemma}[{\cite{gan}}] \label{lemma:vanilla-opt-d}
For a fixed generator $G$ the optimal discriminator is
\begin{equation*}
    D_G^*(x) = \frac{p_{\textrm{data}}(x)}{p_{\textrm{data}}(x) + p_{\textrm{G}}(x)}
\end{equation*}
\end{lemma}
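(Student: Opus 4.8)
The plan is to exploit the separable structure of $V(G,D)$ already highlighted in the text: since
\begin{equation*}
    V(G, D) = \sum_{x \in \mathcal{N}} \Bigl( p_{\textrm{data}}(x) \log(D(x)) + p_{\textrm{G}}(x) \log(1-D(x)) \Bigr),
\end{equation*}
and the feasible set $D \in (0,1)^{|\mathcal{N}|}$ is a product set, maximizing $V(G,\cdot)$ over $D$ decomposes into $|\mathcal{N}|$ independent one-dimensional problems, one per $x \in \mathcal{N}$. So it suffices to maximize $\psi(t) = a \log t + b \log(1-t)$ over $t \in (0,1)$ for fixed constants $a = p_{\textrm{data}}(x) \ge 0$ and $b = p_{\textrm{G}}(x) \ge 0$.

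First I would treat the generic case $a,b > 0$. Here $\psi$ is strongly concave on $(0,1)$ (indeed $\psi''(t) = -a/t^2 - b/(1-t)^2 < 0$), so its unique maximizer is the stationary point: $\psi'(t) = a/t - b/(1-t) = 0$ gives $a(1-t) = bt$, i.e. $t^* = a/(a+b) = p_{\textrm{data}}(x)/(p_{\textrm{data}}(x)+p_{\textrm{G}}(x))$, which lies in $(0,1)$ as required. This is exactly the claimed formula. Strong concavity also certifies this is the global maximum over the open interval.

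Next I would dispatch the degenerate coordinates. If $a = 0$ and $b>0$, then $\psi(t) = b\log(1-t)$ is strictly decreasing and has no maximizer in the open interval $(0,1)$; if $b=0$ and $a>0$ it is strictly increasing with no interior maximizer; if $a=b=0$ the term vanishes identically. In all these cases the $x$-th summand contributes nothing meaningful to the competition (the corresponding point carries no mass under $p_{\textrm{data}}$ and/or $p_{\textrm{G}}$), so the value of $D(x)$ there is immaterial, and one may adopt the convention $D_G^*(x) = p_{\textrm{data}}(x)/(p_{\textrm{data}}(x)+p_{\textrm{G}}(x))$ (interpreting $0/0$ arbitrarily, e.g. as $1/2$) without affecting $\sup_D V(G,D)$; this matches the support-restriction remark in Goodfellow et al. Assembling the per-coordinate maximizers yields the stated $D_G^*$, and since each coordinate was maximized independently, $D_G^*$ maximizes $V(G,\cdot)$.

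The only subtlety — hardly an obstacle — is precisely this boundary bookkeeping: the maximum over the \emph{open} cube $(0,1)^{|\mathcal{N}|}$ need not be attained when some $p_{\textrm{data}}(x)$ or $p_{\textrm{G}}(x)$ vanishes, so the statement should be read as identifying the optimal discriminator on the common support and as being vacuous elsewhere. Everything else is the elementary calculus computation above.
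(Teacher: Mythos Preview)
Your proposal is correct and follows essentially the same approach as the paper: decouple the maximization across coordinates and use concavity of $a\log t + b\log(1-t)$ to identify the unique stationary point $t^* = a/(a+b)$. You are in fact more careful than the paper, which omits the degenerate-coordinate discussion entirely.
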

\begin{proof}
Observe that the optimization problem for each $D(x)$ is decoupled. Thus
\begin{equation*}
    D_G^*(x) = \argmax_{D \in [0,1]} p_{\textrm{data}}(x) \log(D) + p_{\textrm{G}}(x) \log(1-D)
\end{equation*}
By concavity the unique maximum of the above is given by 
\begin{equation*}
    D_G^*(x) = \frac{p_{\textrm{data}}(x)}{p_{\textrm{data}}(x) + p_{\textrm{G}}(x)}
\end{equation*}
\end{proof}

\begin{lemma} \label{lemma:vanilla-opt-g}
For a fixed discriminator $D$, any distribution supported only on
\begin{equation*}
    S_{G_D^*} = \{x \in \mathcal{N}: \forall x' \in \mathcal{N} \quad D(x) \geq D(x')\}
\end{equation*}
is an optimal generator when it is allowed to choose any distribution over $\mathcal{N}$.
\end{lemma}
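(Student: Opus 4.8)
The plan is to exploit that, for a fixed discriminator $D$, the generator's objective is an affine function of the distribution $p_{\textrm{G}}$, so that minimizing it over the simplex reduces to a trivial linear program whose optimal face is spanned by the maximizers of $D$.

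First I would split off the part of the objective that the generator cannot influence: write
\begin{equation*}
V(G,D) = C(D) + \sum_{x\in\mathcal{N}} p_{\textrm{G}}(x)\,\log\bigl(1-D(x)\bigr), \qquad C(D) := \sum_{x\in\mathcal{N}} p_{\textrm{data}}(x)\,\log D(x),
\end{equation*}
where $C(D)$ does not depend on $p_{\textrm{G}}$. Since the discriminator ranges over $(0,1)^{|\mathcal{N}|}$, each coefficient $c_x := \log(1-D(x))$ is a finite real number, and $\mathcal{N}$ is a finite set, so the generator faces the linear program $\min_{p_{\textrm{G}}} \sum_{x} c_x\, p_{\textrm{G}}(x)$ over the compact probability simplex over $\mathcal{N}$.

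Next I would invoke the elementary fact that a linear functional on the simplex attains its minimum value $\min_{x\in\mathcal{N}} c_x$ exactly on those distributions whose support is contained in $\argmin_{x\in\mathcal{N}} c_x$: for every feasible $p_{\textrm{G}}$ one has $\sum_{x} c_x\, p_{\textrm{G}}(x) \ge (\min_{x} c_x)\sum_{x} p_{\textrm{G}}(x) = \min_{x} c_x$, with equality precisely when $p_{\textrm{G}}$ assigns zero mass to every $x$ with $c_x > \min_{x'} c_{x'}$. Because $t\mapsto \log(1-t)$ is strictly decreasing on $[0,1)$, we get $\argmin_{x} \log(1-D(x)) = \argmax_{x} D(x) = S_{G_D^*}$. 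Hence every distribution supported only on $S_{G_D^*}$ attains the common optimal objective value $C(D) + \min_{x}\log(1-D(x))$ and is therefore an optimal generator against $D$, which is exactly the claim.

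There is essentially no hard step here; the only points requiring care are that $\mathcal{N}$ is finite (so the minimum of $D$ over $\mathcal{N}$ is attained and $S_{G_D^*}\neq\emptyset$) and that $D(x)<1$ for all $x$ (so the logarithms are finite) — both guaranteed by the standing assumptions on the vanilla GAN formulation. If instead $D(x)=1$ were permitted at some point, the generator's value would be $-\infty$ and the statement would need to be re-read in that degenerate regime.
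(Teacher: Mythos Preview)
Your proof is correct and follows essentially the same approach as the paper: drop the generator-independent term, reduce to minimizing the linear functional $\sum_x p_{\textrm{G}}(x)\log(1-D(x))$ over the simplex, and note that the minimum is attained exactly on distributions supported on $\argmax_x D(x)$. Your version is slightly more explicit about the strict monotonicity of $t\mapsto\log(1-t)$ and the boundary caveats, but the argument is the same.
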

\begin{proof}
Observe that for a fixed discriminator, the optimal generator optimizes
\begin{equation*}
    \sum_{x \in \mathcal{N}} p_{\textrm{G}}(x) \log(1-D(x))
\end{equation*}
since the other term is independent of the generator. Let us define the following
\begin{equation*}
    D_{\max} = \max_{x \in \mathcal{N}} D(x)
\end{equation*}
Then we have that
\begin{equation*}
    \sum_{x \in \mathcal{N}} p_{\textrm{G}}(x) \log(1-D(x)) \geq \log(1-D_{\max})
\end{equation*}
with the equality being true only for distributions supported only on $S_{G_D^*}$.
\end{proof}

\begin{lemma}[{\cite{gan}}] \label{lemma:vanilla-opt-maxmin}
The min-max generator is the following distribution
\begin{equation*}
    G^* = \argmin_{G\in \mathcal{G}}\mathrm{JSD} (p_{\textrm{data}}||p_{\textrm{G}}).
\end{equation*}
\end{lemma}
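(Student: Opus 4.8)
The plan is to collapse the $\min_G\max_D$ problem to a pure minimization over $G$ by substituting the inner maximizer from \Cref{lemma:vanilla-opt-d}, and then to recognize the resulting objective as an affine function of the Jensen--Shannon divergence, so that its minimizers are exactly those of $\mathrm{JSD}(p_{\textrm{data}}\|p_{\textrm{G}})$.

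First I would fix an arbitrary $G\in\mathcal{G}$. By \Cref{lemma:vanilla-opt-d} the inner maximizer is $D_G^*(x)=p_{\textrm{data}}(x)/(p_{\textrm{data}}(x)+p_{\textrm{G}}(x))$, so the min-max value equals $\min_{G\in\mathcal{G}}C(G)$, where $C(G):=V(G,D_G^*)$. Plugging $D_G^*$ into $V$ gives
\begin{equation*}
  C(G) = \sum_{x\in\mathcal{N}} p_{\textrm{data}}(x)\log\frac{p_{\textrm{data}}(x)}{p_{\textrm{data}}(x)+p_{\textrm{G}}(x)} + \sum_{x\in\mathcal{N}} p_{\textrm{G}}(x)\log\frac{p_{\textrm{G}}(x)}{p_{\textrm{data}}(x)+p_{\textrm{G}}(x)},
\end{equation*}
with the usual convention $0\log 0 = 0$ (and $0\log(1/0)=0$) covering any atom where $p_{\textrm{data}}$ or $p_{\textrm{G}}$ vanishes, consistently with \Cref{lemma:vanilla-opt-d}.

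Next I would rewrite $C(G)$ against the mixture $\bar p:=\tfrac12(p_{\textrm{data}}+p_{\textrm{G}})$: multiplying and dividing each logarithm's argument by $2$ pulls out a $-\log 4$ and leaves two Kullback--Leibler terms,
\begin{equation*}
  C(G) = -\log 4 + \KL\!\left(p_{\textrm{data}}\,\big\|\,\bar p\right) + \KL\!\left(p_{\textrm{G}}\,\big\|\,\bar p\right) = -\log 4 + 2\,\mathrm{JSD}(p_{\textrm{data}}\|p_{\textrm{G}}),
\end{equation*}
by the definition of the Jensen--Shannon divergence. Since $C(G)$ and $2\,\mathrm{JSD}(p_{\textrm{data}}\|p_{\textrm{G}})$ differ only by the additive constant $-\log 4$, their sets of minimizers over $\mathcal{G}$ coincide, which identifies $G^*=\argmin_{G\in\mathcal{G}}\mathrm{JSD}(p_{\textrm{data}}\|p_{\textrm{G}})$ as the min-max generator. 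In the realizable case $p_{\textrm{data}}\in\mathcal{G}$, nonnegativity of $\mathrm{JSD}$ together with $\mathrm{JSD}(P\|Q)=0\iff P=Q$ further pins this down to $G^*=p_{\textrm{data}}$ with value $C(G^*)=-\log 4$, matching the table.

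The computation itself is routine; the only points needing care are the bookkeeping for zero-probability atoms in the logarithmic terms, and the remark that the $\min$ is attained — which holds because $C$ is continuous on $\mathcal{G}$ and $\mathcal{G}$ is (a closed subset of) the probability simplex over the finite set $\mathcal{N}$. I do not expect any genuine obstacle: the content is entirely the identity $V(G,D_G^*)=2\,\mathrm{JSD}(p_{\textrm{data}}\|p_{\textrm{G}})-\log 4$.
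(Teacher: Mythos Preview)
Your proposal is correct and follows essentially the same approach as the paper: substitute the optimal discriminator from \Cref{lemma:vanilla-opt-d}, rewrite $V(G,D_G^*)$ as $-\log 4 + 2\,\mathrm{JSD}(p_{\textrm{data}}\|p_{\textrm{G}})$ via the two KL terms against the mixture, and conclude by minimizing. Your added remarks on zero-probability atoms and attainment via compactness are extra care not present in the paper's version, but the core argument is identical.
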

\begin{proof}
We can substitute in $V(G,D)$ the optimal discriminator from \Cref{lemma:vanilla-opt-d}. Thus we get
\begin{equation*}
    V(G, D_G^*) = \sum_{x \in \mathcal{N}} p_{\textrm{data}}(x) \log\left(\frac{p_{\textrm{data}}(x)}{p_{\textrm{data}}(x) + p_{\textrm{G}}(x)}\right) + 
    \sum_{x \in \mathcal{N}} p_{\textrm{G}}(x) \log\left(\frac{p_{\textrm{G}}(x)}{p_{\textrm{data}}(x) + p_{\textrm{G}}(x) }\right)
\end{equation*}
We can now prove that
\begin{align*}
    V(G, D_G^*) = &- \log(4) + \mathrm{KL} \left(p_{\textrm{data}}||\frac{p_{\textrm{G}}+p_{\textrm{data}}}{2}\right) + 
    \mathrm{KL} \left(p_{\textrm{G}}||\frac{p_{\textrm{G}}+p_{\textrm{data}}}{2}\right)\\
    = & - \log(4) +2 \mathrm{JSD} (p_{\textrm{data}}||p_{\textrm{G}})
\end{align*}
By minimizing $V(G, D_G^*)$, the result follows trivially.
\end{proof}

\begin{lemma} \label{lemma:vanilla-opt-minmax}
The max-min discriminator is
\begin{equation*}
   \forall x \in \mathcal{N}:  D^*(x) = \frac{1}{2}
\end{equation*}
when the generator is allowed choose any distribution over $\mathcal{N}$,
\end{lemma}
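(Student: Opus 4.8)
The plan is to reduce the max--min problem to a one-dimensional optimization by first solving the inner minimization over generators. Fixing a discriminator $D \in (0,1)^{|\mathcal{N}|}$, only the second sum of $V(G,D)$ involves $G$, so I would invoke \cref{lemma:vanilla-opt-g} --- more precisely the inequality $\sum_{x} p_{\textrm{G}}(x)\log(1-D(x)) \ge \log(1-D_{\max})$ established in its proof, where $D_{\max} := \max_{x} D(x)$ --- to conclude that
\begin{equation*}
    \min_G V(G,D) = \sum_{x \in \mathcal{N}} p_{\textrm{data}}(x)\log(D(x)) + \log(1-D_{\max}).
\end{equation*}

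Next I would argue that the outer maximum of this expression is attained at a constant discriminator. For any $D$ with $\max_x D(x) = m$, replacing $D$ by the constant map $\bar{D} \equiv m$ leaves the term $\log(1-D_{\max})$ unchanged, while the term $\sum_{x} p_{\textrm{data}}(x)\log(D(x))$ can only increase, since $D(x)\le m$, $\log$ is increasing, and $\sum_x p_{\textrm{data}}(x) = 1$ forces $\sum_x p_{\textrm{data}}(x)\log(D(x)) \le \log(m)$. Hence $\min_G V(G,\bar D) = \log(m) + \log(1-m) \ge \min_G V(G,D)$, and the problem collapses to $\max_{m \in (0,1)} \big(\log(m) + \log(1-m)\big)$.

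Finally, $m \mapsto \log(m(1-m))$ is strictly concave on $(0,1)$ with unique maximizer $m = \tfrac12$, which is an interior point, so the open-box constraint $D(x)\in(0,1)$ is immaterial; this yields $D^*(x) = \tfrac12$ for every $x \in \mathcal{N}$, with optimal value $-\log(4)$. I would close by noting that this value coincides with $\min_G\max_D V(G,D)$ obtained from \cref{lemma:vanilla-opt-maxmin} (the Jensen--Shannon divergence is nonnegative and vanishes at $p_{\textrm{G}} = p_{\textrm{data}}$), re-deriving von Neumann's minimax equality for this game. I do not anticipate a genuine obstacle: the only steps needing care are the clean invocation of \cref{lemma:vanilla-opt-g} to put the inner minimum in closed form, and verifying that the maximizer $m=\tfrac12$ lies in the interior of $(0,1)$ so that the supremum over the non-closed set $(0,1)^{|\mathcal{N}|}$ is in fact achieved.
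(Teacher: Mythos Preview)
Your proof is correct and follows essentially the same route as the paper: invoke \cref{lemma:vanilla-opt-g} to evaluate the inner minimum, argue that replacing $D$ by the constant $D_{\max}$ can only increase the outer objective, and then solve the resulting one-variable concave maximization to obtain $D^*\equiv\tfrac12$. Your reduction to the constant discriminator is actually a bit more direct than the paper's two-stage argument (first collapsing to two values $D_{\min}<D_{\max}$, then to a single value), but the underlying idea is identical.
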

\begin{proof}
We can substitute in $V(G,D)$ the optimal generator from \Cref{lemma:vanilla-opt-g}
\begin{align*}
    V(G_D^*, D) = &\log(D_{\max})\sum_{x \in S_{G_D^*}} p_{\textrm{data}}(x)  + \log(1-D_{\max}) \sum_{x \in S_{G_D^*}} p_{\textrm{G}}(x) \\
    &+ \sum_{x \notin S_{G_D^*}} p_{\textrm{data}}(x) \log(D(x)) + \sum_{x \notin S_{G_D^*}}\underbrace{p_{\textrm{G}}(x)}_{0} \log(1-D(x))
\end{align*}
Observe that for $x \notin S_{G_D^*}$, if $D$ takes more than two values then setting $D$ equal to the highest of the them for all $x \notin S_{G_D^*}$ improves utility. So for an optimal discriminator we would have a single value $D_{\max} > D_{\min}$. In the end we have that
\begin{align*}
    x \notin S_{G_D^*} &\implies D^*(x) = D_{\min}\\
    x \in S_{G_D^*} &\implies D^*(x) = D_{\max}
\end{align*}
Observe that for any combination of $D_{\max}$ and $D_{\min}$ with $D_{\max} > D_{\min}$, the constant discriminator $D_{\max}$ has higher utility. Therefore we can focus our attention on the constant discriminator $D_{\textrm{const}}(x)=D$
\begin{equation*}
    V(G_{D_{\textrm{const}}}^*, D_{\textrm{const}}) = \log(D) + \log(1-D)  
\end{equation*}
The optimal value for $D$ is $\frac{1}{2}$ and as a result
\begin{equation*}
    D^*(x) = \frac{1}{2}.
\end{equation*}
\end{proof}

\begin{lemma}[Non-realizable case]\label{lemma:vanilla-non-realizable}
If we assume that choice of generator $G$ is restricted in $\mathcal{G}$, a convex compact subset of the $\abs{\mathcal{N}}$ dimensional simplex, such that $p_{\textrm{data}} \notin \mathcal{G}$. Then
\begin{equation*}
    (G^*,D^*)=\left(\argmin_{G\in \mathcal{G}}\mathrm{JSD} (p_{\textrm{data}}||p_{\textrm{G}}), \frac{p_{\textrm{data}}}{p_{\textrm{data}}+p_{\textrm{G}^*}}\right)
    \footnote{We note that $D^*$, may take the value 1 for some $x \in \mathcal{N}$ if the generator $G^*$ does not have full support. Assigning $D(x)=1$ for some $x$ may lead to infinite utilities in general. We prove however for that the pair $(G^*, D^*)$ this is not the case. We thus consider that pair an equilibrium.}
\end{equation*}
\end{lemma}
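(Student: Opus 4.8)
The plan is to show that $(G^*,D^*)$ is a von Neumann equilibrium of the restricted game $\min_{G\in\mathcal G}\max_D V(G,D)$, i.e.\ that $V(G^*,D)\le V(G^*,D^*)\le V(G,D^*)$ for every admissible $D$ and every $G\in\mathcal G$. Since $p_{\textrm{G}}\mapsto \mathrm{JSD}(p_{\textrm{data}}\,\|\,p_{\textrm{G}})$ is continuous and $\mathcal G$ is compact, a minimizer $G^*$ exists; fix one and let $D^*(x)=p_{\textrm{data}}(x)/(p_{\textrm{data}}(x)+p_{\textrm{G}^*}(x))$ (with an arbitrary value in $(0,1)$ on the coordinates where this ratio reads $0/0$; nothing below depends on it). The left inequality is exactly \Cref{lemma:vanilla-opt-d} applied with $G=G^*$, since $D\mapsto V(G^*,D)$ decouples over $x$ into one-dimensional strongly concave problems uniquely maximized at $D^*$. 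Substituting $D^*$ and repeating the algebra of \Cref{lemma:vanilla-opt-maxmin} gives $V(G^*,D^*)=-\log 4+2\,\mathrm{JSD}(p_{\textrm{data}}\,\|\,p_{\textrm{G}^*})$, which equals $\min_{G\in\mathcal G}\max_D V(G,D)$ and, by Fan's minimax theorem ($V$ is affine in $p_{\textrm{G}}$, concave in $D$, $\mathcal G$ convex compact), the max--min value as well; this is a consistency check but is not needed below, so only the right inequality requires work.

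Before that I would dispose of the degeneracy flagged in the footnote, namely that $D^*(x)=1$ exactly when $p_{\textrm{data}}(x)>0$ and $p_{\textrm{G}^*}(x)=0$. \emph{Claim: at any such $x$ every $G\in\mathcal G$ has $p_{\textrm{G}}(x)=0$.} Indeed, if some $G\in\mathcal G$ had $p_{\textrm{G}}(x)>0$, the feasible segment $p_t=(1-t)p_{\textrm{G}^*}+t\,p_{\textrm{G}}\in\mathcal G$ has a coordinate-$x$ contribution to $2\,\mathrm{JSD}(p_{\textrm{data}}\,\|\,p_t)$ containing the term $t\,p_{\textrm{G}}(x)\log\!\big(2t\,p_{\textrm{G}}(x)/(p_{\textrm{data}}(x)+t\,p_{\textrm{G}}(x))\big)$, whose right derivative at $t=0$ is $-\infty$ while all other terms have finite derivative there, contradicting the minimality of $G^*$. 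Consequently the infinite entries $\log(1-D^*(x))=-\infty$ occur only on coordinates where every feasible $G$ vanishes, so $V(G,D^*)\in\mathbb R$ for all $G\in\mathcal G$ (with the convention $0\log 0:=0$), and in particular $V(G^*,D^*)$ is finite --- which is precisely the footnote's assertion.

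For the right inequality, set $c_x=\log(1-D^*(x))\in\mathbb R\cup\{-\infty\}$. Because $\sum_x p_{\textrm{data}}(x)\log D^*(x)$ does not depend on the generator, $G\mapsto V(G,D^*)$ is affine in $p_{\textrm{G}}$ with $V(G,D^*)-V(G^*,D^*)=\langle c,\,p_{\textrm{G}}-p_{\textrm{G}^*}\rangle$, a well-defined real number by the Claim. A direct differentiation gives $\nabla_q\big[2\,\mathrm{JSD}(p_{\textrm{data}}\,\|\,q)\big]_x=\log\!\big(2q_x/(p_{\textrm{data}}(x)+q_x)\big)$, hence at $q=p_{\textrm{G}^*}$ this gradient equals $(\log 2)\,\mathbf 1+c$ on the coordinates that matter. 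Since $q\mapsto\mathrm{JSD}(p_{\textrm{data}}\,\|\,q)$ is convex (immediate from joint convexity of $\mathrm{KL}$) and $p_{\textrm{G}^*}$ minimizes it over the convex set $\mathcal G$, the first-order optimality condition yields $\langle(\log 2)\,\mathbf 1+c,\,p_{\textrm{G}}-p_{\textrm{G}^*}\rangle\ge 0$ for every $G\in\mathcal G$; and $\langle\mathbf 1,\,p_{\textrm{G}}-p_{\textrm{G}^*}\rangle=0$ because both are probability vectors, so $\langle c,\,p_{\textrm{G}}-p_{\textrm{G}^*}\rangle\ge 0$, i.e.\ $V(G,D^*)\ge V(G^*,D^*)$. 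Together with the first paragraph this gives the saddle-point inequalities, so $(G^*,D^*)$ is a von Neumann equilibrium.

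I expect the only real difficulty to be the boundary behavior: $\mathcal G$ can be lower-dimensional and $p_{\textrm{G}^*}$ can lie on a face of the simplex, so ``first-order optimality'' must be read inside the affine hull of $\mathcal G$ and $\mathrm{JSD}(p_{\textrm{data}}\,\|\,\cdot)$ need not be differentiable there. The support Claim is exactly what neutralizes this --- it makes $\mathrm{JSD}(p_{\textrm{data}}\,\|\,\cdot)$ differentiable in every feasible direction at $p_{\textrm{G}^*}$ and ensures the $-\infty$ entries of $c$ are always paired with zero coordinates of $p_{\textrm{G}}-p_{\textrm{G}^*}$ --- after which the rest is routine substitution plus convexity. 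One should also note that the displayed formula for $D^*$ needs the usual convention at the coordinates where $p_{\textrm{data}}(x)=p_{\textrm{G}^*}(x)=0$; in the generic case where $p_{\textrm{data}}$ has full support no such coordinates arise and the sole edge case is $D^*(x)=1$, already handled by the Claim.
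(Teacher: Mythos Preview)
Your proof is correct and takes a genuinely different route from the paper's. The paper invokes Fan's minimax theorem to assert that the game has a value, identifies $G^*$ as the $\mathrm{JSD}$ minimizer via \Cref{lemma:vanilla-opt-maxmin}, and then argues that because $V(G^*,\cdot)$ is strongly concave its maximizer is unique, hence $D^*$ must coincide with the best response $\tilde D=p_{\textrm{data}}/(p_{\textrm{data}}+p_{G^*})$ from \Cref{lemma:vanilla-opt-d}. In particular, the paper does not verify the right saddle inequality $V(G^*,D^*)\le V(G,D^*)$ directly; it comes for free from the minimax value.

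You instead verify the two saddle inequalities by hand: the left one is \Cref{lemma:vanilla-opt-d}, and for the right one you compute the gradient of $q\mapsto 2\,\mathrm{JSD}(p_{\textrm{data}}\|q)$, recognize it as $(\log 2)\mathbf 1+c$ with $c_x=\log(1-D^*(x))$, and apply first-order optimality of $G^*$ over the convex set $\mathcal G$ together with $\langle\mathbf 1,p_G-p_{G^*}\rangle=0$. This avoids any appeal to a minimax theorem for the main argument. Your support Claim (that $p_{G^*}(x)=0<p_{\textrm{data}}(x)$ forces $p_G(x)=0$ for every $G\in\mathcal G$) is a genuine addition: it makes precise the footnote's assertion that the pair $(G^*,D^*)$ has finite utility and ensures the $-\infty$ entries of $c$ are harmless, something the paper's proof leaves implicit. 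The remaining $0/0$ coordinates you flag at the end are indeed not covered by the lemma's displayed formula and would need a convention; the paper does not address this either.
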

\begin{proof}
We cannot readily apply von Neumann's minimax theorem since the $V(G,D)$ may be infinite at the boundary points of $\mathcal{D}=(0,1)^{|\mathcal{N}|}$ for the discriminator. We can still apply Fan's Minimax Theorem
\begin{equation*}
\min_{G\in \mathcal{G}} \sup_{D\in \mathcal{D}} V(G,D) = \sup_{D\in \mathcal{D}} \min_{G\in \mathcal{G}}  V(G,D).
\end{equation*}
It is easy to check that \Cref{lemma:vanilla-opt-maxmin} holds even in the non-realizable case. As a result, the generator is minimizing $\mathrm{JSD} (p_{\textrm{data}}||p_{\textrm{G}})$ whose value is finite. Clearly the quantities above are finite. Thus there exists a real number $v$, the value of the game, such that:
\[
\begin{Bmatrix}
\forall D\in \mathcal{D}&:   V(G^*,D)\leq v=V(G^*,D^*)&(A)\\
\forall G\in \mathcal{G}&:   V(G,D^*)\geq v=V(G^*,D^*)&(B)
\end{Bmatrix}
\]
for $G^*$ the minimizer of $\mathrm{JSD} (p_{\textrm{data}}||p_{\textrm{G}})$ and a $D^* \in [0,1]^{|\mathcal{N}|}$. Now applying \Cref{lemma:vanilla-opt-d}, we have that 
\begin{equation*}
  \tilde{D}=\textrm{Best-Response}(G^*)=\frac{p_{\textrm{data}}}{p_{\textrm{data}}+p_{\textrm{G}^*}}. 
\end{equation*}
Additionally, by the optimality of the response and the consequence (A) of Minimax Theorem it holds that $V(G^*,\tilde{D})=v$.
Finally, since $V(G^*,\cdot)$ is strongly concave, all other discriminators receive value less than $v$ and are not optimal. Thus
\[D^*=\tilde{D}=\tfrac{p_{\textrm{data}}}{p_{\textrm{data}}+p_{\textrm{G}^*}}\]
\end{proof}

\subsubsection{f-GAN}
The utility of the zero-sum game $V(G,D)$ for the distribution $p_{\textrm{data}}$ over the discrete set $\mathcal{N}$ is
\begin{equation*}
    V(G, D) = \sum_{x \in \mathcal{N}} p_{\textrm{data}}(x) D(x) - \sum_{x \in \mathcal{N}} p_{\textrm{G}}(x) f^*(D(x))
\end{equation*}
We will assume that $f$ is a strictly convex function with $f(1)=0$. On the one hand, it is easy to check that for a fixed discriminator $D$, the utility function is linear over the $p_{\textrm{G}}$ operator. On the other hand, for a fixed generator $G$, the utility function is of the form $aD-bf^*(D)$ which is strictly-concave. 

We start our work with the following lemmas
\begin{lemma}[\cite{fgan}] \label{lemma:f-opt-d}
For a fixed generator $G$ the optimal discriminator is
\begin{equation*}
    D_G^*(x) = f'\left(\frac{p_{\textrm{data}}(x)}{p_{\textrm{G}}(x)}\right)
\end{equation*}
\end{lemma}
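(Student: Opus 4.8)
The plan is to exploit the fact that, just as in \Cref{lemma:vanilla-opt-d}, the maximization over $D$ decouples completely across the points $x \in \mathcal{N}$: the discriminator value at $x$ appears only in the single summand $p_{\textrm{data}}(x) D(x) - p_{\textrm{G}}(x) f^*(D(x))$. Hence it suffices to maximize this scalar expression in $D(x)$ for each $x$ separately, which I would rewrite (assuming $p_{\textrm{G}}(x) > 0$, the generic case; I would note the degenerate case $p_{\textrm{G}}(x)=0$ separately, where the term is linear and handled at the boundary) as
\begin{equation*}
p_{\textrm{G}}(x)\left[\frac{p_{\textrm{data}}(x)}{p_{\textrm{G}}(x)}\, D(x) - f^*(D(x))\right].
\end{equation*}

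First I would invoke the Fenchel--Young inequality: for the convex function $f$ with conjugate $f^*$, and writing $t = p_{\textrm{data}}(x)/p_{\textrm{G}}(x)$, one has $t\, u - f^*(u) \le f(t)$ for all $u$ in the domain of $f^*$, with equality attained precisely when $u \in \partial f(t)$, i.e.\ when $u = f'(t)$ since $f$ is (strictly) convex and differentiable. Applying this with $u = D(x)$ shows that each summand is bounded above by $p_{\textrm{G}}(x) f\!\left(p_{\textrm{data}}(x)/p_{\textrm{G}}(x)\right)$, and the bound is achieved by the choice $D(x) = f'\!\left(p_{\textrm{data}}(x)/p_{\textrm{G}}(x)\right)$. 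Summing the per-coordinate optima then yields the claimed $D_G^*$ and simultaneously identifies $\max_D V(G,D) = \sum_x p_{\textrm{G}}(x) f\!\left(p_{\textrm{data}}(x)/p_{\textrm{G}}(x)\right) = \mathrm{D}_f(p_{\textrm{data}}\|p_{\textrm{G}})$, which will be convenient for the later max-min lemmas. Alternatively, the same conclusion follows by first-order conditions: differentiating the scalar objective gives $p_{\textrm{data}}(x) - p_{\textrm{G}}(x)(f^*)'(D(x)) = 0$, hence $(f^*)'(D(x)) = p_{\textrm{data}}(x)/p_{\textrm{G}}(x)$, and then using $(f^*)' = (f')^{-1}$ for strictly convex differentiable $f$.

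The main obstacle, and the only genuinely delicate point, is domain feasibility: one must check that the proposed value $f'\!\left(p_{\textrm{data}}(x)/p_{\textrm{G}}(x)\right)$ indeed lies in the admissible range of the discriminator (the effective domain of $f^*$), which is exactly the image of $f'$, so this is automatic; and one must handle points where $p_{\textrm{G}}(x)=0$ (there the objective is linear in $D(x)$ and the supremum sits at the boundary of $\operatorname{dom} f^*$, matching $f'$ evaluated at $+\infty$ in the limiting sense). Modulo these routine domain checks, which mirror the treatment in \cite{fgan}, the decoupling plus Fenchel--Young argument is complete.
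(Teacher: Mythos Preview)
Your proposal is correct and shares the paper's core structure: decouple the maximization across $x\in\mathcal{N}$ and then solve the one-dimensional concave problem at each point. The only difference is cosmetic: the paper dispatches the scalar problem via the first-order (Fermat) condition $(f^*)'(D)=p_{\textrm{data}}(x)/p_{\textrm{G}}(x)$ together with $(f^*)'^{-1}=f'$, which you list as your alternative route, whereas you lead with the Fenchel--Young inequality. Your Fenchel--Young framing has the small bonus of immediately identifying $\max_D V(G,D)=\mathrm{D}_f(p_{\textrm{data}}\|p_{\textrm{G}})$, a fact the paper derives separately in \Cref{lemma:f-opt-maxmin}; and your explicit handling of the degenerate case $p_{\textrm{G}}(x)=0$ and domain feasibility is more careful than the paper's terse treatment.
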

\begin{proof}
Observe that the optimization problem for each $D(x)$ is decoupled. Thus
\begin{equation*}
    D_G^*(x) = \argmax_{D} p_{\textrm{data}}(x) D - p_{\textrm{G}}(x) f^*(D)
\end{equation*}
By concavity the unique maximum of the above is given by Fermat criterion
\begin{equation*}
    D_G^*(x) = ((f^*)')^{-1}\left(\frac{p_{\textrm{data}}(x)}{p_{\textrm{G}}(x)}\right)=f'\left(\frac{p_{\textrm{data}}(x)}{p_{\textrm{G}}(x)}\right)
\end{equation*}
\end{proof}

\begin{lemma} \label{lemma:f-opt-g}
For a fixed discriminator $D$, any distribution supported only on
\begin{equation*}
    S_{G_D^*} = \{x \in \mathcal{N}: \forall x' \in \mathcal{N} \quad f^*(D(x)) \geq f^*(D(x'))\}
\end{equation*}
is an optimal generator when it is allowed to choose any distribution over $\mathcal{N}$.
\end{lemma}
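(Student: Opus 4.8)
The plan is to mirror verbatim the argument of \Cref{lemma:vanilla-opt-g}, with the term $\log(1-D(x))$ replaced by $-f^*(D(x))$. First I would fix the discriminator $D$ and observe that in
\[
V(G, D) = \sum_{x\in\mathcal{N}} p_{\textrm{data}}(x)\,D(x) \;-\; \sum_{x\in\mathcal{N}} p_{\textrm{G}}(x)\, f^*(D(x)),
\]
only the second sum depends on the generator. Hence, over the set of all probability distributions $p_{\textrm{G}}$ on $\mathcal{N}$, minimizing $V(G,D)$ is equivalent to \emph{maximizing} the linear functional $p_{\textrm{G}}\mapsto\sum_{x\in\mathcal{N}} p_{\textrm{G}}(x)\, f^*(D(x))$.

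Next I would set $M_D = \max_{x\in\mathcal{N}} f^*(D(x))$, which is attained since $\mathcal{N}$ is finite; then by definition $S_{G_D^*} = \{x\in\mathcal{N} : f^*(D(x)) = M_D\}$, which is nonempty. For an arbitrary distribution $p_{\textrm{G}}$ we have $f^*(D(x)) \le M_D$ for every $x$, so
\[
\sum_{x\in\mathcal{N}} p_{\textrm{G}}(x)\, f^*(D(x)) \;\le\; \sum_{x\in\mathcal{N}} p_{\textrm{G}}(x)\, M_D \;=\; M_D,
\]
and equality holds exactly when $p_{\textrm{G}}$ places zero mass on every $x$ with $f^*(D(x)) < M_D$, i.e. precisely when $p_{\textrm{G}}$ is supported on $S_{G_D^*}$. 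Consequently any distribution supported on $S_{G_D^*}$ attains the maximal value $M_D$ of the functional above, hence the minimal value of $V(G,D)$ among all generators, which is exactly the definition of an optimal generator against the fixed $D$.

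There is no real obstacle here; the only care needed is the sign bookkeeping through the conjugate $f^*$ and the remark that the maximum $M_D$ is attained. (If one allows $D(x)$ to lie outside the effective domain of $f^*$, the statement still holds formally, with $M_D = +\infty$ and $S_{G_D^*}$ the set of such points; but this is precisely the regime where $V(G,D)$ fails to be finite, so one should assume $D$ takes values where $f^*$ is finite.)
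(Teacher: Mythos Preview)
Your argument is correct and essentially identical to the paper's own proof: both isolate the generator-dependent term, define the maximum of $f^*(D(\cdot))$ over $\mathcal{N}$ (their $F_{\max}$, your $M_D$), bound the weighted sum by this maximum, and identify the equality case as distributions supported on $S_{G_D^*}$. The only differences are cosmetic (you phrase it as maximizing the sum rather than minimizing its negative, and you add a harmless side remark about the effective domain of $f^*$).
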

\begin{proof}
Observe that for a fixed discriminator, the optimal generator optimizes
\begin{equation*}
 - \sum_{x \in \mathcal{N}} p_{\textrm{G}}(x) f^*(D(x))
\end{equation*}
since the other term is independent of the generator. Let us define the following
\begin{equation*}
    F_{\max} = \max_{x \in \mathcal{N}} f^*(D(x))
\end{equation*}
Then we have that
\begin{equation*}
    - \sum_{x \in \mathcal{N}} p_{\textrm{G}}(x) f^*(D(x)) \geq - F_{\max}
\end{equation*}
with the equality being true only for distributions supported only on $S_{G_D^*}$.
\end{proof}

\begin{lemma}[\cite{fgan}] \label{lemma:f-opt-maxmin}
The min-max generator is the following distribution
\begin{equation*}
    G^* = \argmin_{G\in \mathcal{G}} \mathrm{D}_f (p_{\textrm{data}}||p_{\textrm{G}}).
\end{equation*}
\end{lemma}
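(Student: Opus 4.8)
The plan is to mirror the proof of \Cref{lemma:vanilla-opt-maxmin}: substitute the best-response discriminator supplied by \Cref{lemma:f-opt-d} into the payoff $V(G,D)$ and recognize the resulting function of $G$ as the $f$-divergence. Since \Cref{lemma:f-opt-d} already identifies $D_G^*(x)=f'\!\left(p_{\textrm{data}}(x)/p_{\textrm{G}}(x)\right)$ as the unique maximizer for fixed $G$, the min-max generator is exactly $\argmin_{G\in\mathcal{G}}V(G,D_G^*)$, so no appeal to a minimax theorem is needed at this stage.

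First I would set $u_x := p_{\textrm{data}}(x)/p_{\textrm{G}}(x)$ and invoke the Fenchel--Young equality for the convex differentiable $f$, namely $f^*(f'(u)) = u\,f'(u) - f(u)$ (which is also what underlies $((f^*)')^{-1}=f'$ in \Cref{lemma:f-opt-d}). Substituting $D_G^*(x)=f'(u_x)$ into
\[
V(G,D_G^*) = \sum_{x\in\mathcal{N}} p_{\textrm{data}}(x)\,D_G^*(x) - \sum_{x\in\mathcal{N}} p_{\textrm{G}}(x)\,f^*(D_G^*(x)),
\]
the second sum becomes $\sum_{x} p_{\textrm{G}}(x)\bigl(u_x f'(u_x) - f(u_x)\bigr)$; using the identity $p_{\textrm{G}}(x)\,u_x = p_{\textrm{data}}(x)$, the $f'$-terms cancel against the first sum, leaving
\[
V(G,D_G^*) = \sum_{x\in\mathcal{N}} p_{\textrm{G}}(x)\, f\!\left(\frac{p_{\textrm{data}}(x)}{p_{\textrm{G}}(x)}\right) = \mathrm{D}_f(p_{\textrm{data}}||p_{\textrm{G}}).
\]
Taking the minimum over $G\in\mathcal{G}$ of both sides gives the claim, and existence of the minimizer $G^*$ follows from continuity of $G\mapsto \mathrm{D}_f(p_{\textrm{data}}||p_{\textrm{G}})$ together with compactness of $\mathcal{G}$.

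The main subtlety is the degenerate coordinates: points $x$ with $p_{\textrm{G}}(x)=0$, where $u_x$ is undefined, and points where the best response $f'(u_x)$ lies on the boundary of the effective domain of $f^*$. For these I would adopt the standard extended-value conventions for $f$-divergences used in \cite{fgan} and observe that the pointwise Fenchel duality still yields $\sup_{D(x)}\bigl(p_{\textrm{data}}(x)\,D(x) - p_{\textrm{G}}(x)\,f^*(D(x))\bigr) = p_{\textrm{G}}(x)\, f\!\left(p_{\textrm{data}}(x)/p_{\textrm{G}}(x)\right)$ under those conventions, so the displayed identity persists verbatim. I expect this boundary bookkeeping — rather than the core computation — to be the only delicate part of the argument.
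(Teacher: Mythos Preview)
Your proposal is correct and follows essentially the same route as the paper: substitute the best-response discriminator from \Cref{lemma:f-opt-d} into $V(G,D)$ and identify the result with $\mathrm{D}_f(p_{\textrm{data}}\|p_{\textrm{G}})$ via Fenchel duality. The paper arrives at the same identity by starting from the variational form $f(u)=\sup_v\{uv-f^*(v)\}$ and re-solving the inner supremum term-by-term, whereas your direct use of the Fenchel--Young equality $f^*(f'(u))=u\,f'(u)-f(u)$ is a slightly cleaner way to record the same cancellation.
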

\begin{proof}
We can substitute in $V(G,D)$ the optimal discriminator from \Cref{lemma:f-opt-d}. Thus we get
\begin{equation*}
    V(G, D_G^*) = \sum_{x \in \mathcal{N}} p_{\textrm{data}}(x) f'\left(\frac{p_{\textrm{data}}(x)}{p_{\textrm{G}}(x)}\right) - \sum_{x \in \mathcal{N}} p_{\textrm{G}}(x) f^*\left(f'\left(\frac{p_{\textrm{data}}(x)}{p_{\textrm{G}}(x)}\right)\right)
\end{equation*}

We will first prove that:
\begin{align*}
    V(G, D_G^*) = \mathrm{D}_f (p_{\textrm{data}}||p_{\textrm{G}})
\end{align*}

Let's recall firstly the definition of f-divergence:
\[
\mathrm{D}_f (p_{\textrm{data}}||p_{\textrm{G}})=\sum_{x \in \mathcal{N}} p_{\textrm{G}}(x) f \left(\frac{p_{\textrm{data}}(x)}{p_{\textrm{G}}(x)}\right)
\]
Since $f$ is convex and lower semi-continuous, Frenchel convex
duality guarantees that we can write $f$ in terms
of its conjugate dual as $f(u) = \sup_{v\in \mathbb{R}} \big \{uv -
f^{*}(v) \big \}$. Equivalently we get:
\begin{align*}
  \mathrm{D}_f (p_{\textrm{data}}||p_{\textrm{G}})
  &=\sum_{x \in \mathcal{N}}p_{\textrm{G}}(x)
  \sup_{v\in \mathbb{R}} \left \{ \left(\frac{p_{\textrm{data}}(x)}{p_{\textrm{G}}(x)}\right) v -
  f^{*}(v) \right \}\\
  &=\sum_{x \in \mathcal{N}}
  \sup_{v\in \mathbb{R}} \left \{ p_{\textrm{data}}(x) v -
  f^{*}(v)p_{\textrm{G}}(x) \right \}\\
  &=  \sum_{x \in \mathcal{N}} p_{\textrm{data}}(x) f'\left(\frac{p_{\textrm{data}}(x)}{p_{\textrm{G}}(x)}\right) - \sum_{x \in \mathcal{N}} p_{\textrm{G}}(x) f^*\left(f'\left(\frac{p_{\textrm{data}}(x)}{p_{\textrm{G}}(x)}\right)\right)
\end{align*}
The last line follows arguments similar to \cref{lemma:f-opt-d} applied for each term. By minimizing $V(G, D_G^*)$, the result follows trivially.
\end{proof}

\begin{lemma}\label{lemma:f-opt-minmax}
The max-min discriminator is
\begin{equation*}
   \forall x \in \mathcal{N}:  
    D^*(x) = f^\prime(1)
\end{equation*}
when the generator is allowed choose any distribution over $\mathcal{N}$.
\end{lemma}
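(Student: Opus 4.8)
The plan is to evaluate the outer problem $\sup_D \min_G V(G,D)$ in closed form, reusing the best-response characterization of \Cref{lemma:f-opt-g}. First I would fix $D$: by \Cref{lemma:f-opt-g} the minimizing generator puts all its mass on the maximizers of $f^*(D(\cdot))$, so
\[
\min_G V(G,D) \;=\; \sum_{x\in\mathcal N} p_{\textrm{data}}(x)\,D(x)\;-\;\max_{x\in\mathcal N} f^*\!\bigl(D(x)\bigr).
\]
Since $p_{\textrm{data}}$ is a probability distribution on $\mathcal N$, the maximum dominates the corresponding weighted average, $\max_{x} f^*(D(x)) \ge \sum_x p_{\textrm{data}}(x) f^*(D(x))$, and therefore
\[
\min_G V(G,D)\;\le\;\sum_{x\in\mathcal N} p_{\textrm{data}}(x)\bigl(D(x)-f^*(D(x))\bigr).
\]

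The key step is to read off $D-f^*(D)$ from Fenchel duality. As $f$ is convex and lower semicontinuous we have $f=f^{**}$, and evaluating the biconjugate at the point $1$ gives $f(1)=\sup_{v}\bigl(v-f^*(v)\bigr)$, with the supremum attained precisely on $\partial f(1)$, which is the singleton $\{f'(1)\}$ by differentiability of $f$. Since $f(1)=0$, each summand obeys $D(x)-f^*(D(x))\le 0$, so $\min_G V(G,D)\le 0$ for every $D$. It remains to exhibit a discriminator achieving this bound: for the constant choice $D^*(x)\equiv f'(1)$ the Fenchel--Young equality yields $f^*(f'(1)) = 1\cdot f'(1)-f(1)=f'(1)$, hence $\min_G V(G,D^*)=f'(1)-f'(1)=0$. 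Thus $D^*\equiv f'(1)$ is a max-min discriminator and the game value is $v=0$. Unwinding the two inequalities shows that on the support of $p_{\textrm{data}}$ any optimal $D$ must satisfy $D(x)=f'(1)$ (uniqueness of the maximizer of $v\mapsto v-f^*(v)$, together with the equality case of the $\max\ge$ average bound, which forces $f^*(D(\cdot))$ to be constant there), so the stated constant discriminator is the natural representative; off the support any value $D(x)$ with $f^*(D(x))\le f^*(f'(1))$ is equally good.

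I expect the main obstacle to be purely the convex-analytic bookkeeping: confirming that the standing hypotheses on $f$ (strictly convex, differentiable, lower semicontinuous, $f(1)=0$, with $f^*$ proper) are enough to justify $f=f^{**}$, the identity $f^*(f'(1))=f'(1)-f(1)$, and the fact that $v\mapsto v-f^*(v)$ is maximized only at $v=f'(1)$. Everything else --- the reduction via \Cref{lemma:f-opt-g} and the ``max $\ge$ average'' inequality --- is routine, and the argument closely parallels the proof of \Cref{lemma:vanilla-opt-minmax}, with $\log(D)+\log(1-D)$ replaced by the concave map $v\mapsto v-f^*(v)$.
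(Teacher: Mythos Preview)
Your argument is correct and in fact cleaner than the paper's. The paper proceeds by a structural reduction: after plugging in the best-response generator from \Cref{lemma:f-opt-g}, it argues by case analysis that any optimal discriminator can be replaced by a two-level one, and then that either constant level dominates the two-level choice; only at the end does it optimize the scalar map $D\mapsto D-f^*(D)$ and land on $f'(1)$. You skip the reduction-to-constant entirely by bounding $\min_G V(G,D)\le \sum_x p_{\textrm{data}}(x)\bigl(D(x)-f^*(D(x))\bigr)\le f(1)=0$ directly via Fenchel--Young, and then exhibiting the constant $f'(1)$ that attains the bound. This buys you a shorter, more conceptual proof and, as you note, a sharper description of the optimal set (constant on $\operatorname{supp}p_{\textrm{data}}$, freedom elsewhere subject to the $f^*$-constraint). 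The paper's route, by contrast, is more elementary in that it never invokes $f=f^{**}$ explicitly and stays at the level of direct comparisons; it is closer in spirit to its vanilla-GAN analogue \Cref{lemma:vanilla-opt-minmax}. Both are valid; yours is the more polished version.
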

\begin{proof}
We want to substitute in $V(G,D)$ the optimal generator from \Cref{lemma:vanilla-opt-g}. Observe that for all $x \in S_{G_D^*}$, we may not have all $D(x)$ to be equal. Only the values of $f^*$ are guaranteed to be equal, $f^*(D(x)) = F_{\max}$. However, if there are two distinct $D$ values then we can always pick the higher one and improve utility. Thus we can focus on discriminators that are constant over $S_{G_D^*}$. Let $D_{F_{\max}}$ be the corresponding value
\begin{align*}
    V(G_D^*, D) &=  D_{F_{\max}} \sum_{x \in S_i} p_{\textrm{data}}(x)  - f^*(D_{F_{\max}}) \sum_{x \in S_i} p_{\textrm{G}}(x)  \\
    &+ \sum_{x \notin S_{G_D^*}} p_{\textrm{data}}(x) D(x) - \sum_{x \notin S_{G_D^*}}\underbrace{p_{\textrm{G}}(x)}_{0} f^*(D(x))
\end{align*}
Observe that for $x \notin S_{G_D^*}$, if $D$ takes more than two values then setting $D$ equal to the highest of the them for all $x \notin S_{G_D^*}$ improves utility. So for an optimal discriminator we would have a single value $D_{F_{\min}}$ with $f^*(D_{F_{\min}}) < f^*(D_{F_{\max}})$. As a result
\begin{align*}
    x \notin S_{G_D^*} &\implies D^*(x) = D_{F_{\min}}\\
    x \in S_{G_D^*} &\implies D^*(x) = D_{F_{\max}}
\end{align*}
We now have two cases. For any combination with $D_{F_{\min}} > D_{F_{\max}}$, the constant discriminator $D(x) = D_{F_{\min}}$ has higher utility. Symmetrically, for any combination with $D_{F_{\max}} > D_{F_{\min}}$, the constant discriminator $D(x) = D_{F_{\max}}$ has higher utility. Thus the optimal discriminator is constant. Plugging in the constant discriminator $D_{\textrm{const}}(x)=D$ we get
\begin{equation*}
    V(G_{D_{\textrm{const}}}^*, D_{\textrm{const}}) = D + f^*(D)
\end{equation*}
The optimal value for $D$ follwoing the approach of \Cref{lemma:f-opt-d} is $f'(1)$ and as a result
\begin{equation*}
    D^*(x) = f'(1)
\end{equation*}
\end{proof}

\begin{lemma}[Non-realizable case]\label{lemma:f-non-realizable}
Assume that $f \in C^1$ is strictly convex and $\lim_{x \to 0^+} x f(\frac{1}{x})$ exists and is finite\footnote{This assumption guarantees that the $\mathrm{D}_f$ is always finite even if the distribution chosen by the generator is not fully supported on $\mathcal{N}$. This in turn guarantees that $D^*$ is also finite resulting in a meaningful equilibrium. Unbounded divergences like KL are known to be problematic for GANs even in practice \cite{arjovsky2017wasserstein}.}. If the choice of generator $G$ is restricted in $\mathcal{G}$, a convex compact subset of the $\abs{\mathcal{N}}$ dimensional simplex, such that $p_{\textrm{data}} \notin \mathcal{G}$ then
\begin{equation*}
    (G^*,D^*)=\left(\argmin_{G\in \mathcal{G}}\mathrm{D}_f (p_{\textrm{data}}||p_{\textrm{G}}),f'\left(\frac{p_{\textrm{data}}}{p_{\textrm{G}^*}}\right)\right) 
\end{equation*}
\end{lemma}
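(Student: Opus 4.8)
The plan is to follow the argument of \Cref{lemma:vanilla-non-realizable} line for line, substituting the $f$-GAN objects. First I would observe that von Neumann's minimax theorem does not apply verbatim---the discriminator ranges over (the interior of) $\operatorname{dom} f^*$, which need not be compact, and $V(G,\cdot)$ may blow up toward its boundary---so instead I invoke Fan's minimax theorem. Its hypotheses are immediate: $V(\cdot,D)$ is linear in $p_{\textrm{G}}$, hence convex and lower semicontinuous on the compact convex set $\mathcal{G}$; and $V(G,\cdot)$ is concave, in fact strictly concave on the interior of $\operatorname{dom} f^*$ because $f^*$ is strictly convex there. Thus $\min_{G\in\mathcal{G}}\sup_D V(G,D)=\sup_D\min_{G\in\mathcal{G}}V(G,D)=:v$.

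Next I would pin down $v$ and $G^*$. \Cref{lemma:f-opt-maxmin} goes through unchanged on the restricted domain: substituting the pointwise best response $D_G^*(x)=f'(p_{\textrm{data}}(x)/p_{\textrm{G}}(x))$ gives $\sup_D V(G,D)=\mathrm{D}_f(p_{\textrm{data}}\,\|\,p_{\textrm{G}})$, so $\min_{G\in\mathcal{G}}\sup_D V(G,D)=\min_{G\in\mathcal{G}}\mathrm{D}_f(p_{\textrm{data}}\,\|\,p_{\textrm{G}})$. The map $p_{\textrm{G}}\mapsto p_{\textrm{G}}f(p_{\textrm{data}}/p_{\textrm{G}})$ is the perspective of the strictly convex $f$ and is therefore strictly convex in $p_{\textrm{G}}$, so the minimizer $G^*$ over the convex compact $\mathcal{G}$ exists and is unique. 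The hypothesis that $\lim_{x\to 0^+}xf(1/x)$ exists and is finite is exactly the statement $b:=\lim_{u\to\infty}f(u)/u=\sup_u f'(u)<\infty$ (for convex $f$ the ratio $f(u)/u$ increases to $\sup f'$); using the convention $0\cdot f(a/0)=ab$, this makes $\mathrm{D}_f(p_{\textrm{data}}\,\|\,p_{\textrm{G}})$ finite for every $G\in\mathcal{G}$---even when $p_{\textrm{G}}$ is not fully supported---so $v<\infty$.

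With $v$ finite I would extract a saddle point exactly as in \Cref{lemma:vanilla-non-realizable}: the outer minimum is attained at $G^*$, and the inner supremum is attained at some $D^*$ with finite entries in $\operatorname{dom} f^*$ (attainment follows from upper semicontinuity of $D\mapsto\min_G V(G,D)=\sum_x p_{\textrm{data}}(x)D(x)-\max_x f^*(D(x))$ together with its coercivity, $f^*$ growing superlinearly where $\operatorname{dom} f^*$ is unbounded), so that $V(G^*,D)\le v=V(G^*,D^*)$ for all $D$ and $V(G,D^*)\ge v$ for all $G\in\mathcal{G}$. The first chain says $D^*$ is a best response to $G^*$, so by \Cref{lemma:f-opt-d}, applied coordinatewise (reading $f'(+\infty)=b$ on any coordinate with $p_{\textrm{G}^*}=0$), $D^*(x)=f'(p_{\textrm{data}}(x)/p_{\textrm{G}^*}(x))$; strict concavity of $V(G^*,\cdot)$ makes this response unique. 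Together with $G^*=\argmin_{G\in\mathcal{G}}\mathrm{D}_f(p_{\textrm{data}}\,\|\,p_{\textrm{G}})$ this is the claimed pair, and $V(G^*,D^*)=v$ being finite certifies it as a legitimate equilibrium.

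The main obstacle is the boundary bookkeeping in the last two paragraphs---keeping $v$ and every $D^*(x)$ finite when $p_{\textrm{G}^*}$ lacks full support on $\mathcal{N}$. This is precisely where the integrability hypothesis is used, via the identification $\lim_{x\to 0^+}xf(1/x)=\sup_u f'(u)$; once that is in hand, the convex-analytic steps (Fan's theorem, strict convexity of the perspective, uniqueness of the strictly concave best response) are routine.
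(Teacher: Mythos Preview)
Your proof follows essentially the same route as the paper's: invoke Fan's minimax theorem in place of von Neumann, identify $G^*$ via \Cref{lemma:f-opt-maxmin} as the $\mathrm{D}_f$-minimizer (using the finiteness hypothesis on $f$), and then pin down $D^*$ as the unique best response to $G^*$ via \Cref{lemma:f-opt-d} together with strict concavity of $V(G^*,\cdot)$. You add more detail than the paper on interpreting the limit hypothesis and on the boundary case $p_{\textrm{G}^*}(x)=0$; one small slip is that your formula $\min_{G} V(G,D)=\sum_x p_{\textrm{data}}(x)D(x)-\max_x f^*(D(x))$ presumes $G$ ranges over the full simplex rather than the restricted set $\mathcal{G}$, but this does not affect the overall argument.
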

\begin{proof}
We cannot readily apply von Neumann's minimax theorem since the $V(G,D)$ since $\mathcal{D}=\mathbb{R}^{|\mathcal{N}|}$ is not compact for the discriminator. We can still apply Fan's Minimax Theorem
\begin{equation*}
\min_{G\in \mathcal{G}} \sup_{D\in \mathcal{D}} V(G,D) = \sup_{D\in \mathcal{D}} \min_{G\in \mathcal{G}}  V(G,D).
\end{equation*}
It is easy to check that \Cref{lemma:f-opt-minmax} holds even in the non-realizable case. As a result, the generator is minimizing $\mathrm{D}_f (p_{\textrm{data}}||p_{\textrm{G}})$ whose value is finite under the assumptions we made on $f$. Clearly the quantities above are finite. Thus there exists a real number $v$, the value of the game, such that:
\[
\begin{Bmatrix}
\forall D\in \mathcal{D}&:   V(G^*,D)\leq v=V(G^*,D^*)&(A)\\
\forall G\in \mathcal{G}&:   V(G,D^*)\geq v=V(G^*,D^*)&(B)
\end{Bmatrix}
\]
for $G^*$ the minimizer of $\mathrm{D}_f (p_{\textrm{data}}||p_{\textrm{G}})$ and a $D^* \in \bar{\mathbb{R}}^{|\mathcal{N}|}$.
Now applying \Cref{lemma:f-opt-d} we have that 
\begin{equation*}
    \tilde{D}=\textrm{Best-Response}(G^*)=f'\left(\frac{p_{\textrm{data}}(x)}{p_{\textrm{G}^*}(x)}\right).
\end{equation*}
Additionally, by the optimality of the response and the consequence (A) of Minimax Theorem it holds that $V(G^*,\tilde{D})=v$.
Finally, assuming that $f$ is strictly convex we get that $V(G,\cdot)$ is strictly concave, $\textrm{Best-Response}(G^*)$ is unique and thus \[D^*=\tilde{D}=f'\left(\frac{p_{\textrm{data}}(x)}{p_{\textrm{G}^*}(x)}\right)\]
\end{proof}

\subsubsection{WGAN}
The utility of the zero-sum game $V(G,D)$ for the distribution $p_{\textrm{data}}$ over the discrete metric space $(\mathcal{N},\mathrm{dist})$
\begin{align*}
    V(G, D) &= \mathbb{E}_{\mathbf{X}\sim p_{\textrm{data}}}[D(\mathbf{X})]-\mathbb{E}_{\mathbf{X}\sim p_{\textrm{G}}}[D(\mathbf{X})] \\
    &= \sum_{x \in \mathcal{N}} (p_{\textrm{data}}(x) - p_{\textrm{G}}(x) ) D(x)
     \text{ where $\|D\|_{\textrm{Lip}}\leq 1$}
\end{align*}
On the one hand, it is easy to check that for a fixed discriminator $D$, the utility function is linear over the $p_{\textrm{G}}$ operator.
On the other hand, for a fixed generator $G$, the utility function is linear over $D$.

We start our work with the following lemmas
\begin{lemma}[\cite{arjovsky2017wasserstein}] \label{lemma:wgan-opt-d}
For a fixed generator $G$ the optimal discriminator is a solution of the following linear program
\begin{equation*}
\begin{array}{lc}
 \text{maximize over }D(\cdot)    & \displaystyle\sum_{x \in \mathcal{N}} (p_{\textrm{data}}(x) - p_{\textrm{G}}(x) ) D(x)  \\\\
 \text{subject to}  & \abs{D(x)-D(x')}\le \mathrm{dist}(x,x') , \forall x,x'\in \mathcal{N} 
\end{array}
\end{equation*}
where the optimal value of the LP is the Earth mover's distance between $p_{\textrm{data}}$ and $p_{\textrm{G}}$.
\end{lemma}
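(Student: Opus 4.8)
The plan is to read this lemma as the finite, discrete instance of Kantorovich--Rubinstein duality. The first claim is essentially a definitional unfolding: on the finite metric space $(\mathcal{N},\mathrm{dist})$ a discriminator $D:\mathcal{N}\to\mathbb{R}$ satisfies $\|D\|_{\textrm{Lip}}\le 1$ exactly when $\lvert D(x)-D(x')\rvert\le \mathrm{dist}(x,x')$ for every pair $x,x'\in\mathcal{N}$, so maximizing $V(G,\cdot)=\sum_{x\in\mathcal{N}}(p_{\textrm{data}}(x)-p_{\textrm{G}}(x))D(x)$ over $1$-Lipschitz $D$ is literally the displayed linear program. Since the objective is linear and the feasible polyhedron is nonempty (it contains $D\equiv 0$) and invariant under adding a constant to $D$ (which also leaves the objective unchanged because $\sum_x(p_{\textrm{data}}(x)-p_{\textrm{G}}(x))=0$), one may fix a basepoint $x_0$ and restrict to the compact slice $\{D : D(x_0)=0\}$, so a maximizer exists; that maximizer is by construction the optimal discriminator.

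For the identification of the optimal value with the earth mover's distance, I would write down the primal optimal-transport LP whose optimum \emph{is} that distance: minimize $\sum_{x,y\in\mathcal{N}}\pi(x,y)\,\mathrm{dist}(x,y)$ over $\pi\ge 0$ subject to the marginal constraints $\sum_{y}\pi(x,y)=p_{\textrm{data}}(x)$ and $\sum_{x}\pi(x,y)=p_{\textrm{G}}(y)$. This polytope is nonempty --- the independent coupling $\pi(x,y)=p_{\textrm{data}}(x)p_{\textrm{G}}(y)$ is feasible --- and bounded, so the transport LP is finite and attained. Dualizing it, the two families of equality marginal constraints produce free potentials $u(\cdot)$ on the data side and $v(\cdot)$ on the generator side; minimizing the Lagrangian over $\pi\ge 0$ forces $u(x)+v(y)\le\mathrm{dist}(x,y)$ for all $x,y$, and the dual objective is $\sum_x u(x)p_{\textrm{data}}(x)+\sum_y v(y)p_{\textrm{G}}(y)$. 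Strong LP duality then equates this dual optimum with the earth mover's distance, so it remains only to see that this dual LP has the same value as the one in the statement.

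That last reduction is the one genuinely non-formal step. I would invoke the $c$-transform: given any feasible $(u,v)$, replacing $v$ by $\bar v(y):=\min_{x}\big(\mathrm{dist}(x,y)-u(x)\big)\ge v(y)$ does not decrease the dual objective (since $p_{\textrm{G}}\ge 0$) and keeps feasibility, and symmetrically for $u\mapsto\bar u$; because $\mathrm{dist}$ is a metric this process stabilizes at a $1$-Lipschitz potential $\phi$ whose partner is $-\phi$ (an infimum of the $1$-Lipschitz maps $x\mapsto\mathrm{dist}(x,y)-\phi(y)$ is $1$-Lipschitz, and for $1$-Lipschitz $\phi$ one has $\min_x(\mathrm{dist}(x,y)-\phi(x))=-\phi(y)$). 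Hence at the dual optimum one may take $v=-u$ with $D:=u$ a $1$-Lipschitz function; substituting, the dual objective becomes $\sum_x(p_{\textrm{data}}(x)-p_{\textrm{G}}(x))D(x)$ and the constraint $u(x)+v(y)\le\mathrm{dist}(x,y)$ becomes $D(x)-D(y)\le\mathrm{dist}(x,y)$, which, ranging over all ordered pairs, is exactly the $1$-Lipschitz condition by symmetry of $\mathrm{dist}$. Thus the LP in the statement has optimal value equal to the earth mover's distance, and by the first paragraph its maximizer is the optimal discriminator. The main obstacle is purely the bookkeeping of the LP dual together with this $c$-transform/metric argument reconciling the two dual formulations; existence of a maximizer, linearity, and nonemptiness of the transport polytope are routine.
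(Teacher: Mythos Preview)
Your proposal is correct and follows essentially the same approach as the paper: both observe that the LP is definitionally the WGAN discriminator problem with the Lipschitz constraint spelled out, and both identify its optimal value with the earth mover's distance via Kantorovich--Rubinstein duality. The only difference is that the paper invokes this duality as a black box with a citation, whereas you give a self-contained sketch via LP duality and the $c$-transform reduction to a single $1$-Lipschitz potential.
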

\begin{proof}
Indeed, by definition any solution of the above LP is an optimal discriminator over a fixed generator $G$. 
To complete the proof of the statement, we recall that Earth Mover's distance of $(p_{\textrm{data}},p_{\textrm{G}})$ is equal to 
\begin{equation*}
    \min_{\Delta\in \text{Coupling}(p_{\textrm{data}},p_{\textrm{G}})}\mathbb{E}_{(\mathbf{X},\mathbf{X}')\sim \Delta}[\mathrm{dist}(X,X')].
\end{equation*}
Now if we consider the dual formulation of the Wasserstein distance, 
then the Kantorovich duality \cite{evans1997partial,villani2008optimal} implies that the above linear program consists exactly the dual linear
program which computes the Earth Mover's distance.
\end{proof}

\begin{lemma} \label{lemma:wgan-opt-g}
For a fixed discriminator $D$, any distribution supported only on
\begin{equation*}
    S_{G_D^*} = \{x \in \mathcal{N}: \forall x' \in \mathcal{N} \quad D(x) \geq D(x')\}
\end{equation*}
is an optimal generator when it is allowed to choose any distribution over $\mathcal{N}$.
\end{lemma}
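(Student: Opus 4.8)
The plan is to mirror the argument of \Cref{lemma:vanilla-opt-g} almost verbatim, exploiting that the WGAN objective is linear in $p_{\textrm{G}}$. First I would observe that, for a fixed discriminator $D$, the term $\sum_{x \in \mathcal{N}} p_{\textrm{data}}(x) D(x)$ does not depend on the generator, so minimizing $V(G,D)$ over $p_{\textrm{G}}$ is equivalent to minimizing $-\sum_{x \in \mathcal{N}} p_{\textrm{G}}(x) D(x)$, i.e.\ to maximizing $\sum_{x \in \mathcal{N}} p_{\textrm{G}}(x) D(x)$ over all probability distributions on $\mathcal{N}$.

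Next, set $D_{\max} = \max_{x \in \mathcal{N}} D(x)$ (the maximum is attained since $\mathcal{N}$ is finite). For every distribution $p_{\textrm{G}}$ one has
\[
\sum_{x \in \mathcal{N}} p_{\textrm{G}}(x) D(x) \;\le\; D_{\max} \sum_{x \in \mathcal{N}} p_{\textrm{G}}(x) \;=\; D_{\max},
\]
and equality holds if and only if $p_{\textrm{G}}(x) = 0$ for every $x$ with $D(x) < D_{\max}$, i.e.\ if and only if $p_{\textrm{G}}$ is supported on $S_{G_D^*}$. Since at least one such distribution exists (e.g.\ a point mass at any maximizer of $D$), every distribution supported on $S_{G_D^*}$ attains the optimum, which proves the claim.

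There is no real obstacle here; the only points worth stating carefully are that the maximum defining $D_{\max}$ is attained (finiteness of $\mathcal{N}$) and that a maximizing generator is actually feasible, so that the bound is tight. Note that, unlike \Cref{lemma:f-opt-g}, the relevant quantity is $D(x)$ itself rather than $f^*(D(x))$, because the WGAN payoff couples to the discriminator output linearly; otherwise the reasoning is identical.
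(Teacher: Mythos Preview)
Your proposal is correct and follows essentially the same argument as the paper: isolate the generator-dependent term $-\sum_{x} p_{\textrm G}(x) D(x)$, bound it by $-D_{\max}$, and observe equality holds precisely for distributions supported on $S_{G_D^*}$. The only difference is that you spell out feasibility and attainment of $D_{\max}$ explicitly, which the paper leaves implicit.
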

\begin{proof}
Observe that for a fixed discriminator, the optimal generator optimizes
\begin{equation*}
    -\sum_{x \in \mathcal{N}} p_{\textrm{G}}(x)D(x)
\end{equation*}
since the other term is independent of the generator. Let us define the following
\begin{equation*}
    D_{\max} = \max_{x \in \mathcal{N}} D(x)
\end{equation*}
Then we have that
\begin{equation*}
    -\sum_{x \in \mathcal{N}} p_{\textrm{G}}(x)D(x) \geq - D_{\max}
\end{equation*}
with the equality being true only for distributions supported only on $S_{G_D^*}$.
\end{proof}

\begin{lemma}[\cite{arjovsky2017wasserstein}] \label{lemma:wgan-opt-maxmin}
The min-max generator is the following distribution
\begin{equation*}
    G^* = \argmin_{G\in \mathcal{G}}\mathrm{EMD} (p_{\textrm{data}},p_{\textrm{G}}).
\end{equation*}
\end{lemma}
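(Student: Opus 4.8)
The plan is to mirror the proofs of \Cref{lemma:vanilla-opt-maxmin} and \Cref{lemma:f-opt-maxmin}: substitute the best-response discriminator into the objective and recognize the resulting function of $G$ as a standard divergence — here the earth mover's distance. So the proof will be short, since the substantive work has already been done in \Cref{lemma:wgan-opt-d}.

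First I would fix an arbitrary generator $G$ and apply \Cref{lemma:wgan-opt-d}, which identifies the inner maximization $\sup_{\|D\|_{\textrm{Lip}}\le 1}\sum_{x\in\mathcal N}(p_{\textrm{data}}(x)-p_{\textrm{G}}(x))D(x)$ as a linear program whose optimal value equals $\mathrm{EMD}(p_{\textrm{data}},p_{\textrm{G}})$ by Kantorovich duality. Since $\mathcal N$ is finite, the LP's feasible region is compact and the value is attained, so
\[
\max_{\|D\|_{\textrm{Lip}}\le 1} V(G,D)=\mathrm{EMD}(p_{\textrm{data}},p_{\textrm{G}}).
\]
Taking $\min_{G\in\mathcal G}$ of both sides then gives $\min_{G\in\mathcal G}\max_{D}V(G,D)=\min_{G\in\mathcal G}\mathrm{EMD}(p_{\textrm{data}},p_{\textrm{G}})$, and the minimizer on the left is exactly $G^*=\argmin_{G\in\mathcal G}\mathrm{EMD}(p_{\textrm{data}},p_{\textrm{G}})$, as claimed. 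In the realizable case $p_{\textrm{data}}\in\mathcal G$ this specializes to $G^*=p_{\textrm{data}}$ with game value $0$, matching the table.

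There is no serious technical obstacle here; the content was front-loaded into \Cref{lemma:wgan-opt-d}. The one point that warrants a sentence of care is that the outer minimum over $G$ is actually attained, so that it is legitimate to speak of \emph{the} min-max generator: this holds because $\mathcal G$ is a convex compact subset of the simplex and $p_{\textrm{G}}\mapsto\mathrm{EMD}(p_{\textrm{data}},p_{\textrm{G}})$ is continuous — indeed convex, being a pointwise supremum of affine functions of $p_{\textrm{G}}$ — hence it attains its minimum on $\mathcal G$.
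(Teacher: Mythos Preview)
Your approach is the same as the paper's: substitute the optimal discriminator from \Cref{lemma:wgan-opt-d}, identify the resulting inner value as $\mathrm{EMD}(p_{\textrm{data}},p_{\textrm{G}})$, and minimize over $G$. One small slip: the LP's feasible region is \emph{not} compact, since the Lipschitz constraints are invariant under adding a constant to $D$; but the objective is likewise shift-invariant (because $\sum_x(p_{\textrm{data}}(x)-p_{\textrm{G}}(x))=0$), so after fixing $D$ at one point the remaining set is compact and attainment still holds --- the conclusion and the rest of your argument are unaffected.
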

\begin{proof}
We can substitute in $V(G,D)$ the optimal discriminator from \Cref{lemma:wgan-opt-d}. Thus we get 
\begin{equation*}
    V(G, D_G^*) =\mathrm{EMD} (p_{\textrm{data}},p_{\textrm{G}})
\end{equation*}
By minimizing $V(G, D_G^*)$, the result follows trivially.
\end{proof}

\begin{lemma} \label{lemma:wgan-opt-minmax}
The max-min discriminator is
\begin{equation*}
   \forall x \in \mathcal{N}:  D^*(x) = c \text{, Constant function}
\end{equation*}
when the generator is allowed choose any distribution over $\mathcal{N}$,
\end{lemma}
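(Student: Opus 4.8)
The plan is to collapse the inner minimization over the generator using \Cref{lemma:wgan-opt-g} and then recognize that the resulting objective in $D$ is maximized by the constant functions.

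First I would fix an arbitrary admissible discriminator $D\colon\mathcal{N}\to\mathbb{R}$ with $\|D\|_{\textrm{Lip}}\le 1$. By \Cref{lemma:wgan-opt-g} the generator's best response places all of its mass on the set $\{x\in\mathcal{N} : D(x)=D_{\max}\}$, where $D_{\max}:=\max_{x\in\mathcal{N}}D(x)$, so
\[
\min_{G}V(G,D)=\sum_{x\in\mathcal{N}}p_{\textrm{data}}(x)D(x)-D_{\max}.
\]
Because $p_{\textrm{data}}$ is a probability distribution on $\mathcal{N}$ and $D(x)\le D_{\max}$ for every $x$, we have $\sum_{x}p_{\textrm{data}}(x)D(x)\le D_{\max}\sum_{x}p_{\textrm{data}}(x)=D_{\max}$, hence $\min_{G}V(G,D)\le 0$ for every admissible $D$, and therefore $\sup_{D}\min_{G}V(G,D)\le 0$.

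Next I would verify that this upper bound is attained by a constant discriminator. If $D\equiv c$ then $\|D\|_{\textrm{Lip}}=0\le 1$, so $D$ is admissible, and for every generator $G$,
\[
V(G,c)=\sum_{x\in\mathcal{N}}\bigl(p_{\textrm{data}}(x)-p_{\textrm{G}}(x)\bigr)c=c\Bigl(\sum_{x\in\mathcal{N}}p_{\textrm{data}}(x)-\sum_{x\in\mathcal{N}}p_{\textrm{G}}(x)\Bigr)=c(1-1)=0,
\]
so $\min_{G}V(G,c)=0$. Combining the two displays, $\max_{D}\min_{G}V(G,D)=0$ and this maximum is achieved at $D^{*}(x)=c$, which is the claim. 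As a sanity check this is consistent with \Cref{lemma:wgan-opt-maxmin}: in the realizable case $\min_{G}\mathrm{EMD}(p_{\textrm{data}},p_{\textrm{G}})=0$, so the min-max and max-min values both equal $0$.

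There is no genuinely hard step here; the argument is a one-line application of \Cref{lemma:wgan-opt-g} followed by the elementary inequality $\sum_{x}p_{\textrm{data}}(x)D(x)\le\max_{x}D(x)$. The only points deserving care are that the reduction in \Cref{lemma:wgan-opt-g} never used the Lipschitz constraint, so the bound $\min_{G}V(G,D)\le 0$ holds in particular on the unit Lipschitz ball, and that constant functions lie in that ball and are hence legitimate choices for the discriminator. If one additionally wants to describe \emph{all} max-min discriminators, equality $\sum_{x}p_{\textrm{data}}(x)D(x)=D_{\max}$ forces $D$ to be constant on the support of $p_{\textrm{data}}$; when $p_{\textrm{data}}$ has full support this pins $D$ down to a constant, and otherwise $D$ may be chosen freely (subject to being $1$-Lipschitz) off the support, which is why only the constant solution is singled out.
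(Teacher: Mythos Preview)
Your proof is correct and follows essentially the same approach as the paper: both invoke \Cref{lemma:wgan-opt-g} to collapse the inner minimization to $\sum_{x}p_{\textrm{data}}(x)D(x)-D_{\max}$ and then argue this is maximized by a constant discriminator with value $0$. Your argument is in fact more streamlined than the paper's, which goes through an intermediate reduction to a two-valued discriminator $D_{\max}>D_{\min}$ before passing to the constant; you instead apply the one-line inequality $\sum_{x}p_{\textrm{data}}(x)D(x)\le D_{\max}$ directly.
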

\begin{proof}
We can substitute in $V(G,D)$ the optimal generator from \Cref{lemma:vanilla-opt-g}
\begin{align*}
    V(G_D^*, D) = &D_{\max}\sum_{x \in S_{G_D^*}} p_{\textrm{data}}(x)  -D_{\max} \sum_{x \in S_{G_D^*}} p_{\textrm{G}}(x) \\
    &+ \sum_{x \notin S_{G_D^*}} p_{\textrm{data}}(x) D(x) - \sum_{x \notin S_{G_D^*}}\underbrace{p_{\textrm{G}}(x)}_{0} D(x)
\end{align*}
Observe that for $x \notin S_{G_D^*}$, if $D$ takes more than two values then setting $D$ equal to the highest of the them for all $x \notin S_{G_D^*}$ improves utility. So for an optimal discriminator we would have a single value $D_{\max} > D_{\min}$. In the end we have that
\begin{align*}
    x \notin S_{G_D^*} &\implies D^*(x) = D_{\min}\\
    x \in S_{G_D^*} &\implies D^*(x) = D_{\max}
\end{align*}
Observe that for any combination of $D_{\max}$ and $D_{\min}$ with $D_{\max} > D_{\min}$, the constant discriminator $D_{\max}$ has higher utility. Therefore we can focus our attention on the constant discriminator $D_{\textrm{const}}(x)=D$, where the optimal value is exactly zero.
\begin{equation*}
    V(G_{D_{\textrm{const}}}^*, D_{\textrm{const}}) =0
\end{equation*}
Finally, it is easy to check that the choice of constant discriminator satisfies trivially the Lipschitz constraints, i.e $ \abs{ D_{\textrm{const}}(x)- D_{\textrm{const}}(x') }=0\leq \mathrm{dist(x,x')} $ for any metric function $\mathrm{dist}$.
\end{proof}

\subsection{GANs and Hidden Constrained Optimization}
\label{sec:constrained-hidden}
\begin{tcolorbox}
In the following section, we will generalize the results of \cref{sec:strict} and \cref{sec:regularization} for the case of \emph{vanilla} GAN of \cite{goodfellow2014generative} whose objective is linear-strong-concave where the maximization part is constrained in the distributional simplex. More precisely,
\begin{equation*}
        \min_{\substack{p_{\textrm{G}}(x) \geq 0,\\ \sum_{x \in \mathcal{N}} p_{\textrm{G}}(x) =1}}\max_{D\in (0,1)^{|\mathcal{N}|}}V(G, D) = \sum_{x \in \mathcal{N}} p_{\textrm{data}}(x) \log(D(x)) + \sum_{x \in \mathcal{N}} p_{\textrm{G}}(x) \log(1-D(x))
\end{equation*}
At a first glance, by rewriting the equivalent Langrangian formulation of the aforementioned constrained min-max problem we can see that strong-concavity property does not hold any more. However our following theorem shows that by exploiting further the structure of \cite{goodfellow2014generative}'s architecture a convergence result is possible. 
\end{tcolorbox}

\begin{theorem}
Let $V(Gen_{\btheta}, Disc_{\bphi})$ be Goodfellow GAN as described in \cref{sec:insights}, where $G,D$ use sigmoid activations. 
Then  for a fully mixed distribution $p_{\textrm{data}}$, $(\bF(t)=Gen_{\btheta(t)},\bG(t)= Disc_{\bphi(t)})$ converges to $(p_{\textrm{data}}, \frac{1}{2}\mathbf{1}_{|\mathcal{N}|})$ as $t \to \infty$ under the dynamics of \cref{eq:gda}.
\end{theorem}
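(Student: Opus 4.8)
The plan is to realize the constrained game of \cite{goodfellow2014generative} as an HCC game by turning the box constraints into sigmoids and the normalization constraint into a Lagrange multiplier, and then to imitate the proof of \cref{th:strict-convergence}; the one genuinely new point is that after this relaxation the hidden utility is strictly concave only in the discriminator coordinates, \emph{not} jointly with the multiplier, so the invariance argument needs an extra step. Concretely, write $p_{\textrm{G}}(x)=\sigma(\theta_x)$ and $D(x)=\sigma(\phi_x)$ (making $p_{\textrm{G}}(x),D(x)\in(0,1)$ automatic) and let the maximizing player additionally control a scalar $\mu$, fed through the identity operator, that plays the role of the multiplier for $\sum_x p_{\textrm{G}}(x)=1$. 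Then \cref{eq:gda} is GDA on the HCC game with $\bF(\btheta)=(\sigma(\theta_x))_x$, $\bG(\bphi,\mu)=\big((\sigma(\phi_x))_x,\mu\big)$ and
\[
L(\mathbf p_{\textrm{G}},\mathbf D,\mu)=\sum_x p_{\textrm{data}}(x)\log D_x+\sum_x p_{\textrm{G},x}\log(1-D_x)+\mu\Big(\textstyle\sum_x p_{\textrm{G},x}-1\Big),
\]
which is affine in $\mathbf p_{\textrm{G}}$, concave in $(\mathbf D,\mu)$, and — since $p_{\textrm{data}}$ is fully mixed, so every $p_{\textrm{data}}(x)>0$ — strictly concave in $\mathbf D$ for each fixed $(\mathbf p_{\textrm{G}},\mu)$. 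Solving $\nabla L=\mathbf 0$ shows $\saddle(L)$ is the single point $(\mathbf p^*,\mathbf q^*)=\big(p_{\textrm{data}},\tfrac12\mathbf 1,\ln 2\big)$. Since $\sigma$ is a diffeomorphism onto $(0,1)$ with nowhere-vanishing derivative and the identity has derivative $1$, and since $p_{\textrm{data}}(x),\tfrac12\in(0,1)$ and $\ln 2\in\mathbb R$, \emph{every} initialization is safe for $(\mathbf p^*,\mathbf q^*)$; as in the analysis of \cref{th:stability_sigmoid_main} the induced output dynamics of \cref{theorem:reparametrization} are initialization-independent, namely $\dot p_{\textrm{G},x}=-p_{\textrm{G},x}^2(1-p_{\textrm{G},x})^2\big(\log(1-D_x)+\mu\big)$, $\dot D_x=D_x^2(1-D_x)^2\big(p_{\textrm{data}}(x)/D_x-p_{\textrm{G},x}/(1-D_x)\big)$, $\dot\mu=\sum_x p_{\textrm{G},x}-1$.

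Next, form the Lyapunov function $H$ of \cref{lemma:lyapunov-function} for $(\mathbf p^*,\mathbf q^*)$; its $\mu$-term is simply $\tfrac12(\mu-\ln 2)^2$ because the identity has unit gradient, and $\dot H\le 0$ on the domain $D_{\mathrm{dom}}=(0,1)^{|\mathcal N|}\times(0,1)^{|\mathcal N|}\times\mathbb R$. As in \cref{claim:omega:interior}, each term of $H$ blows up as its argument approaches the boundary of its range: for the sigmoid terms because $\int 1/(z^2(1-z)^2)$ diverges at $0$ and $1$, and for the $\mu$-term because $(\mu-\ln 2)^2\to\infty$ as $\mu\to\pm\infty$. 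Hence, writing $H_0=H(\bF(\btheta(0)),\bG(\bphi(0),\mu(0)))$ (the case $H_0=0$ being trivial), the sublevel set $\Omega=\{H\le H_0\}$ is closed, bounded, contained in the interior of $D_{\mathrm{dom}}$, and forward invariant, so it satisfies the hypotheses of \cref{th:lasalle}.

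Now run LaSalle with the cascade that replaces the missing joint strictness. Following the computation in the proof of \cref{lemma:lyapunov-function}, $\dot H=0$ forces $L(\mathbf p^*,\mathbf G)=L(\mathbf p^*,\mathbf q^*)$, i.e. $\sum_x p_{\textrm{data}}(x)\log\big(D_x(1-D_x)\big)=\sum_x p_{\textrm{data}}(x)\log\tfrac14$; since every $p_{\textrm{data}}(x)>0$ and $z(1-z)\le\tfrac14$ with equality only at $z=\tfrac12$, this gives $\mathbf D=\tfrac12\mathbf 1$, so $E:=\{\dot H=0\}\subseteq\{\mathbf D=\tfrac12\mathbf 1\}$. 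Let $\mathcal M$ be the largest invariant set in $E\cap\Omega$. On $\mathcal M$ we have $\mathbf D\equiv\tfrac12\mathbf 1$, hence $0\equiv\dot D_x=\tfrac18\big(p_{\textrm{data}}(x)-p_{\textrm{G},x}\big)$, which forces $\mathbf p_{\textrm{G}}\equiv p_{\textrm{data}}$; and then $0\equiv\dot p_{\textrm{G},x}=-p_{\textrm{data}}(x)^2(1-p_{\textrm{data}}(x))^2(\mu-\ln 2)$, which (again using $p_{\textrm{data}}(x)\in(0,1)$) forces $\mu\equiv\ln 2$. Thus $\mathcal M=\{(p_{\textrm{data}},\tfrac12\mathbf 1,\ln 2)\}$. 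By \cref{th:lasalle} the trajectory, started in the compact forward-invariant $\Omega$, approaches $\mathcal M$, and as $\mathcal M$ is a single point this is convergence $(\bF(t),\bG(t))\to(p_{\textrm{data}},\tfrac12\mathbf 1,\ln 2)$; in particular $Gen_{\btheta(t)}\to p_{\textrm{data}}$ and $Disc_{\bphi(t)}\to\tfrac12\mathbf 1_{|\mathcal N|}$.

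The main obstacle is exactly this departure from \cref{th:strict-convergence}: the Lagrangian $L$ fails to be strictly convex concave in the sense of \cref{def:conv-conc} — it is affine in the generator and only strictly concave in $\mathbf D$, not in $(\mathbf D,\mu)$ jointly — so $E$ is a full hyperplane rather than a point, and the naive invariant-set step stalls at $\mathbf D=\tfrac12\mathbf 1$. Recovering last-iterate convergence therefore hinges on the two-step ``descent inside $\mathcal M$'': differentiating the quantities that are constant along invariant trajectories to pin down first $\mathbf p_{\textrm{G}}$ and then $\mu$ — this is the place where the specific structure of the Goodfellow architecture (and full mixedness of $p_{\textrm{data}}$) is really used. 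The secondary care point is verifying, as in \cref{claim:omega:interior}, that $H$ is radially unbounded including in the unbounded $\mu$-direction, which is what makes $\Omega$ compact.
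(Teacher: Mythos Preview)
Your proposal is correct and follows essentially the same route as the paper: introduce the Lagrange multiplier through the identity operator, build the Lyapunov function of \cref{lemma:lyapunov-function} with an extra $\tfrac12(\mu-\mu^*)^2$ term, apply LaSalle on the compact sublevel set, and then use the invariance cascade $\dot H=0\Rightarrow\mathbf D=\tfrac12\mathbf 1\Rightarrow\dot{\mathbf D}=0\Rightarrow\mathbf p_{\textrm G}=p_{\textrm{data}}\Rightarrow\dot{\mathbf p}_{\textrm G}=0\Rightarrow\mu=\mu^*$ to shrink $\mathcal M$ to a point. If anything, you are slightly more explicit than the paper in two places --- you verify radial unboundedness in the $\mu$-direction (the paper just cites \cref{claim:omega:interior}), and you spell out why the argument genuinely leaves the scope of \cref{th:strict-convergence} (the Lagrangian is only strictly concave in $\mathbf D$, not in $(\mathbf D,\mu)$), which is exactly the reason the extra invariance steps are needed.
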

\begin{proof}
Let us write down our original objective
\begin{equation*}
    \min_{\substack{p_{\textrm{G}}(x) \geq 0,\\ \sum_{x \in \mathcal{N}} p_{\textrm{G}}(x) =1}}\max_{D\in (0,1)^{|\mathcal{N}|}}V(G, D) = \sum_{x \in \mathcal{N}} p_{\textrm{data}}(x) \log(D(x)) + \sum_{x \in \mathcal{N}} p_{\textrm{G}}(x) \log(1-D(x))
\end{equation*}
In order to remove the constraints from the objective above, we plan to make use of a Lagrange multiplier. We remind the reader that since both the discriminator and the generator use the sigmoid activations, we only have to capture the $ \sum_{x \in \mathcal{N}} p_{\textrm{G}}(x) =1$ constraint. Thus, our equivalent Langragian is: 
\[
\min_{\btheta\in \mathbb{R}^{|\mathcal{N}|}}\max_{\bphi\in \mathbb{R}^{|\mathcal{N}|},\lambda\in\mathbb{R}} 
L(\bF,\bG,\lambda)= \mathbf{p_{\textrm{data}}}^\top \log(\bG(\bphi)) + \bF(\btheta)^\top \log(1-\bG(\bphi)) +\lambda(\bF^\top\mathbf{1}_{|\mathcal{N}|}-1)
\]
where  
\[
\begin{matrix}
\bF(\btheta) &=& \begin{bmatrix} 
                f_{1}(\theta_{1}) &
                f_{2}(\theta_{2}) &
                \cdots &
                f_{|\mathcal{N}|}(\theta_{|\mathcal{N}|})
        \end{bmatrix}\\
\bG(\bphi) &=& \begin{bmatrix} 
                g_{1}(\phi_{1}) &
                g_{2}(\phi_{2}) &
                \cdots &
                g_{|\mathcal{N}|}(\phi_{|\mathcal{N}|})
        \end{bmatrix}
        \end{matrix}
\]
and $f_i$ and $g_j$ are sigmoid functions and $\theta_i$ and $\phi_j$ are their one dimensional inputs. 
Let's write again the equivalent dynamics of \cref{eq:gda-transformed} for the sigmoid activations and the Langrage multiplier. Applying the same steps with \Cref{th:stable_sigmoid:restate} for sigmoids:
\[
\begin{Bmatrix}
        \dot{f}_i &=&-& f_i^2(1-f_i)^2 \frac{\partial L}{\partial f_i} (\bF, \bG) \quad \forall i\in[|\mathcal{N}|]\\\\
        \dot{g}_j &=&&  g_j^2(1-g_j)^2 \frac{\partial L}{\partial g_j} (\bF, \bG)\quad \forall j\in[|\mathcal{N}|]\\\\
        \dot{\lambda}&=&& \sum_{i=0}^{|\mathcal{N}|} f_i -1
\end{Bmatrix}
\]
Since all initializations are safe in this game, our ``generalized'' Lyapunov function:
    \[H(\bF,\bG,\lambda) = \sum_{i=0}^{|\mathcal{N}|} \int_{p_{\textrm{data}}(x_i)}^{f_{i}} \frac{z-p_{\textrm{data}}(x_i)}{z^2(1-z)^2 } \mathrm{d}z + \sum_{j=0}^{|\mathcal{N}|} \int_{1/2}^{g_{j}} \frac{z-1/2}{z^2(1-z)^2 } \mathrm{d}z+\dfrac{(\lambda-\lambda^*)^2}{2}\]
where $\lambda^*$ is the Langrange multiplier at the equilibrium of the non-hidden game and $x_i$ is the $i$-th element of $\mathcal{N}$. Applying the same steps as in \cref{lemma:restate-las} we get that GDA approaches the largest invariant set $E$ of points $(\bF, \bG, \lambda)$ that have the following properties 
\begin{align*}
    L(\mathbf{p_{\textrm{data}}}, \bG, \lambda) = L\left(\mathbf{p_{\textrm{data}}},\frac{1}{2}\mathbf{1}_{|\mathcal{N}|}, \lambda^*\right)\\
    L\left(\bF, \frac{1}{2}\mathbf{1}_{|\mathcal{N}|}, \lambda^*\right) = L\left(\mathbf{p_{\textrm{data}}},\frac{1}{2}\mathbf{1}_{|\mathcal{N}|}, \lambda^*\right)
\end{align*}

For the first equality, we have that the value of $\lambda$ does not affect $L$ when the generator respects the sum to one constraint. Thus
\begin{equation*}
    L(\mathbf{p_{\textrm{data}}}, \bG, \lambda) = L(\mathbf{p_{\textrm{data}}}, \bG, \lambda^*)
\end{equation*}
Then we can observe that $L(\mathbf{p_{\textrm{data}}}, \bG, \lambda^*)$ is strictly concave in $\bG$ and given that $\frac{1}{2}\mathbf{1}_{|\mathcal{N}|}$ is its unique minimum we have that 
\begin{equation*}
    L(\mathbf{p_{\textrm{data}}}, \bG, \lambda^*) = L\left(\mathbf{p_{\textrm{data}}}, \frac{1}{2}\mathbf{1}_{|\mathcal{N}|}, \lambda^*\right) \implies \bG = \frac{1}{2}\mathbf{1}_{|\mathcal{N}|}
\end{equation*}
Given that $E$ is an invariant set and $\bG$ is constant in $E$, we have that $\dot{\bG}=0$. In other words,
\begin{equation*}
    0 = \frac{1}{2^2}\left(1-\frac{1}{2}\right)^2 \frac{\partial L}{\partial g_j} \left(\bF, \frac{1}{2}\mathbf{1}_{|\mathcal{N}|}, \lambda\right)\quad \forall j\in[|\mathcal{N}|]
\end{equation*}
As a consequence we have that
\begin{equation*}
    \frac{\partial L}{\partial g_j} \left(\bF, \frac{1}{2}\mathbf{1}_{|\mathcal{N}|}, \lambda\right) = 0 \implies f_j = p_{\textrm{data}}(x_j) \quad \forall j\in[|\mathcal{N}|]
\end{equation*}
Once again, given that $E$ is an invariant set and $\bF$ is constant in $E$, we have that $\dot{\bF}=0$
\begin{equation*}
    0 = p_{\textrm{data}}(x_i)^2\left(1-p_{\textrm{data}}(x_i)\right)^2 \frac{\partial L}{\partial f_i} \left(p_{\textrm{data}}, \frac{1}{2}\mathbf{1}_{|\mathcal{N}|}, \lambda\right)\quad \forall i\in[|\mathcal{N}|]
\end{equation*}
This leads to
\begin{equation*}
    \frac{\partial L}{\partial f_i} \left(p_{\textrm{data}}, \frac{1}{2}\mathbf{1}_{|\mathcal{N}|}, \lambda\right)= 0 \implies \lambda = \log\left(\frac{1}{2}\right)\quad \forall i\in[|\mathcal{N}|]
\end{equation*}
Observe that by the optimality conditions of the non-hidden game, $\lambda^*$ needs to satisfy the same equation and thus $\lambda=\lambda^*$. Clearly we have that
\begin{equation*}
    (\bF, \bG, \lambda) \in E \implies (\bF, \bG, \lambda) = \left(p_{\textrm{data}}, \frac{1}{2}\mathbf{1}_{|\mathcal{N}|}, \lambda^*\right)
\end{equation*}
Thus the dynamics converge to the unique equilibrium of the hidden game.
\end{proof}
\subsection{Zero-Sum Games}
\label{sec:zerosum}
We close this section with an application of our regularization machinery in hidden bilinear games.
Hidden bilinear zero-sum games were introduced by \cite{vlatakis2019poincare} and they are formally defined as:

\begin{definition}[Hidden Bilinear Zero-Sum Game]
 In a hidden bilinear zero-sum game there are two players, each one equipped with a smooth function $\pmb{F}: \mathbb{R}^{n} \to \mathbb{R}^N$ and $\pmb{G}: \mathbb{R}^{m} \to \mathbb{R}^{M}$  and a payoff matrix $U_{N  \times M}$ such that each player inputs its own decision vector $\pmb{\theta} \in \mathbb{R}^{n}$ and $\pmb{\phi}\in \mathbb{R}^{m}$ and is trying to maximize or minimize $r(\pmb{\theta},\pmb{\phi})=\pmb{F}(\pmb{\theta})^\top U \pmb{G}(\pmb{\phi})$ respectively.
\end{definition}

For the special case of hidden bilinear games, \cite{vlatakis2019poincare} proved that if the dimension of the game 
is greater or equal than two like  (e.g. akin to Rock-Paper-Scissors) then GDA dynamics tend to ``cycle'' through their parameter space
with an even more complex behavior than a typical periodic trajectory. Specifically, the system is formally analogous to Poincar\'{e} recurrent systems (e.g. many body problem in physics). In contrast, leveraging \cref{th:regularization:main}, we know that by adding a small regularization term we can ``break'' the cycling behavior and converge to an approximate Nash Equilibrium.
We close this section by presenting a comparison between the optimization portraits of GDA dynamics with the absence or not of a regularization
for the archetypical game of Rock-Paper-Scissors:

\begin{figure}[h!]
\centering
\begin{minipage}[b]{0.42\linewidth}
\begin{center}
\includegraphics[width=.85\textwidth]{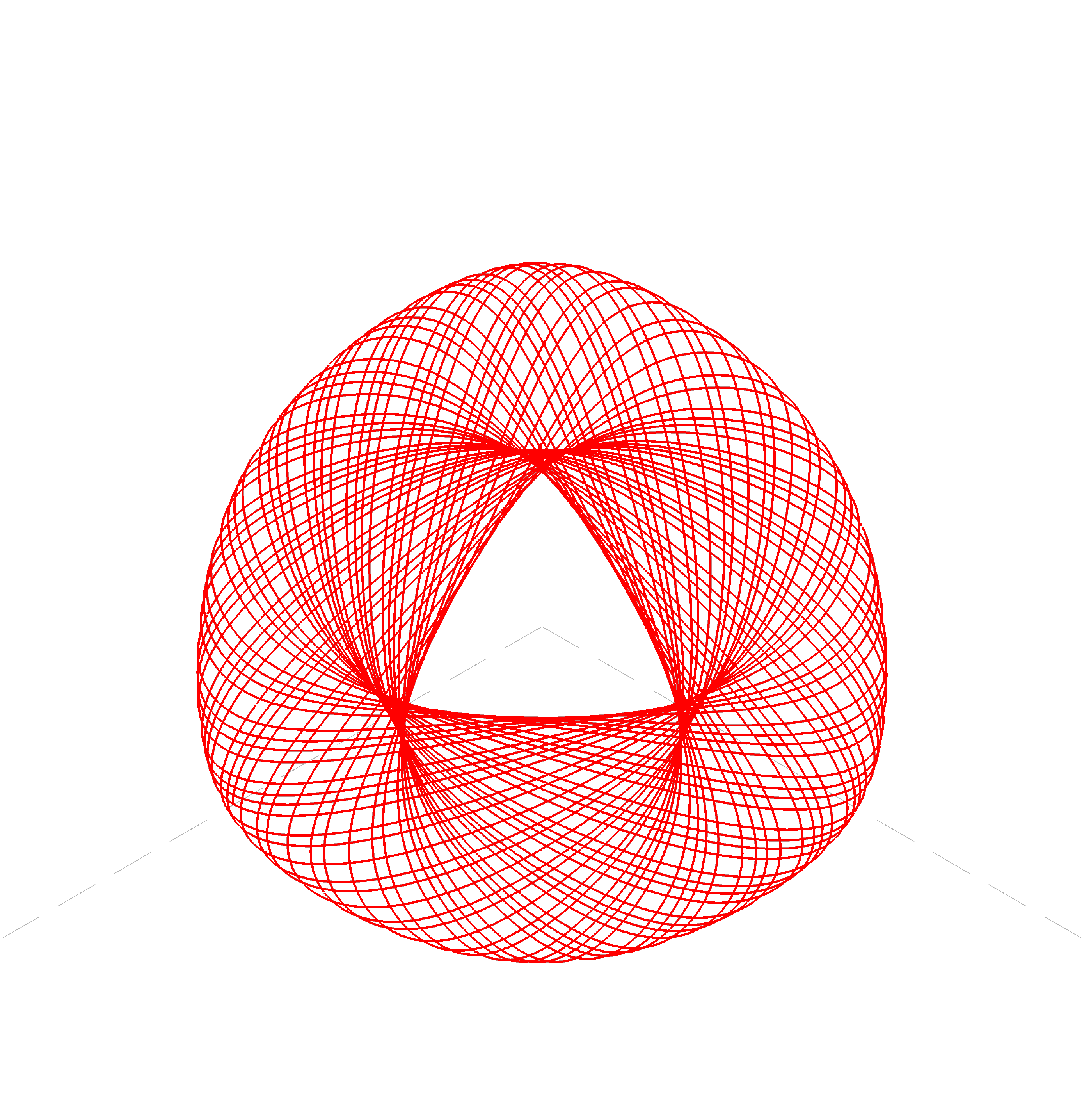}
\end{center}
\end{minipage}
\quad
\begin{minipage}[b]{0.42\linewidth}
\begin{center}
\includegraphics[width=.85\textwidth]{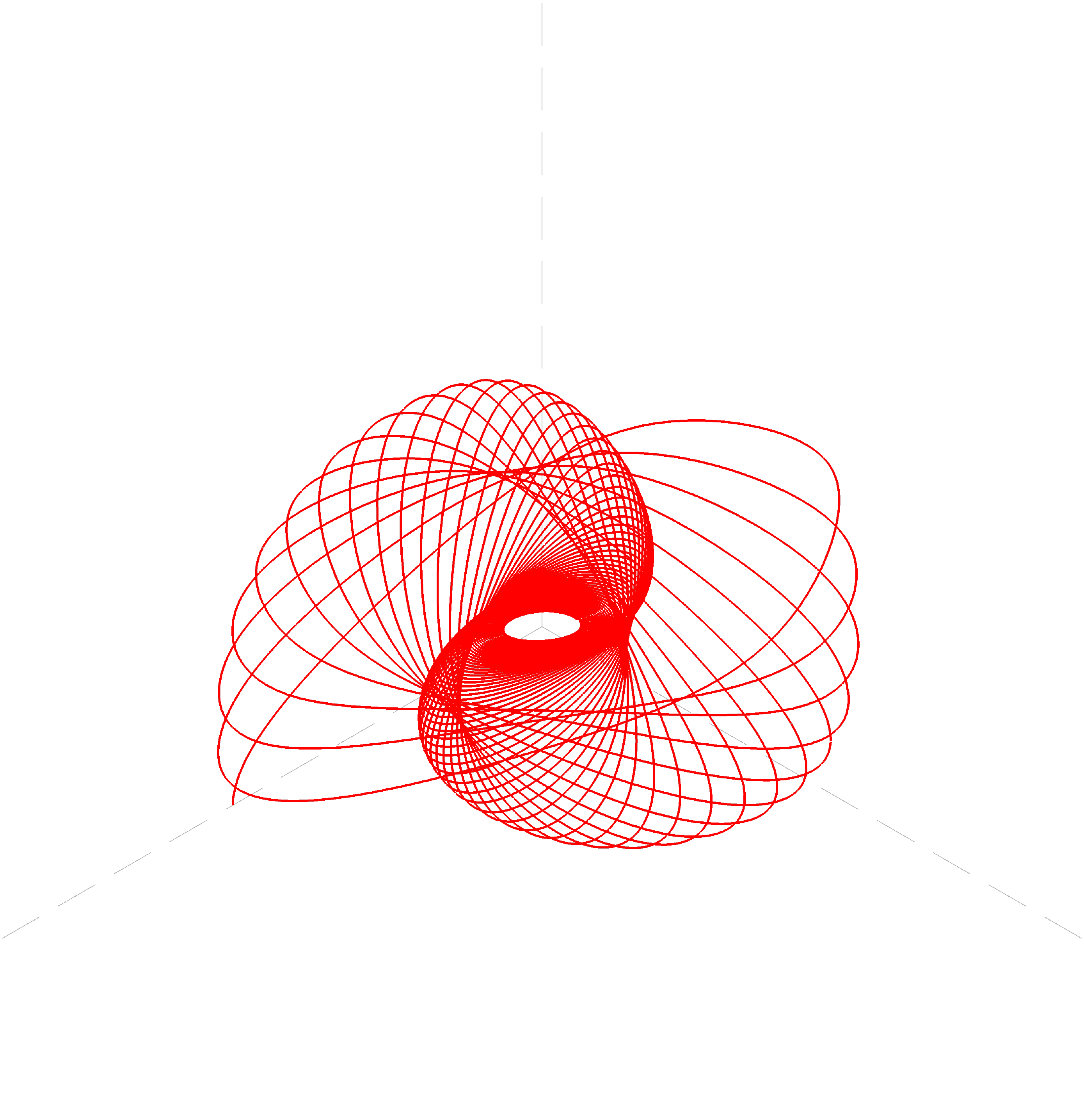}
\end{center}
\end{minipage}
\quad
\begin{minipage}[b]{0.42\linewidth}
\begin{center}    
\includegraphics[width=.85\textwidth]{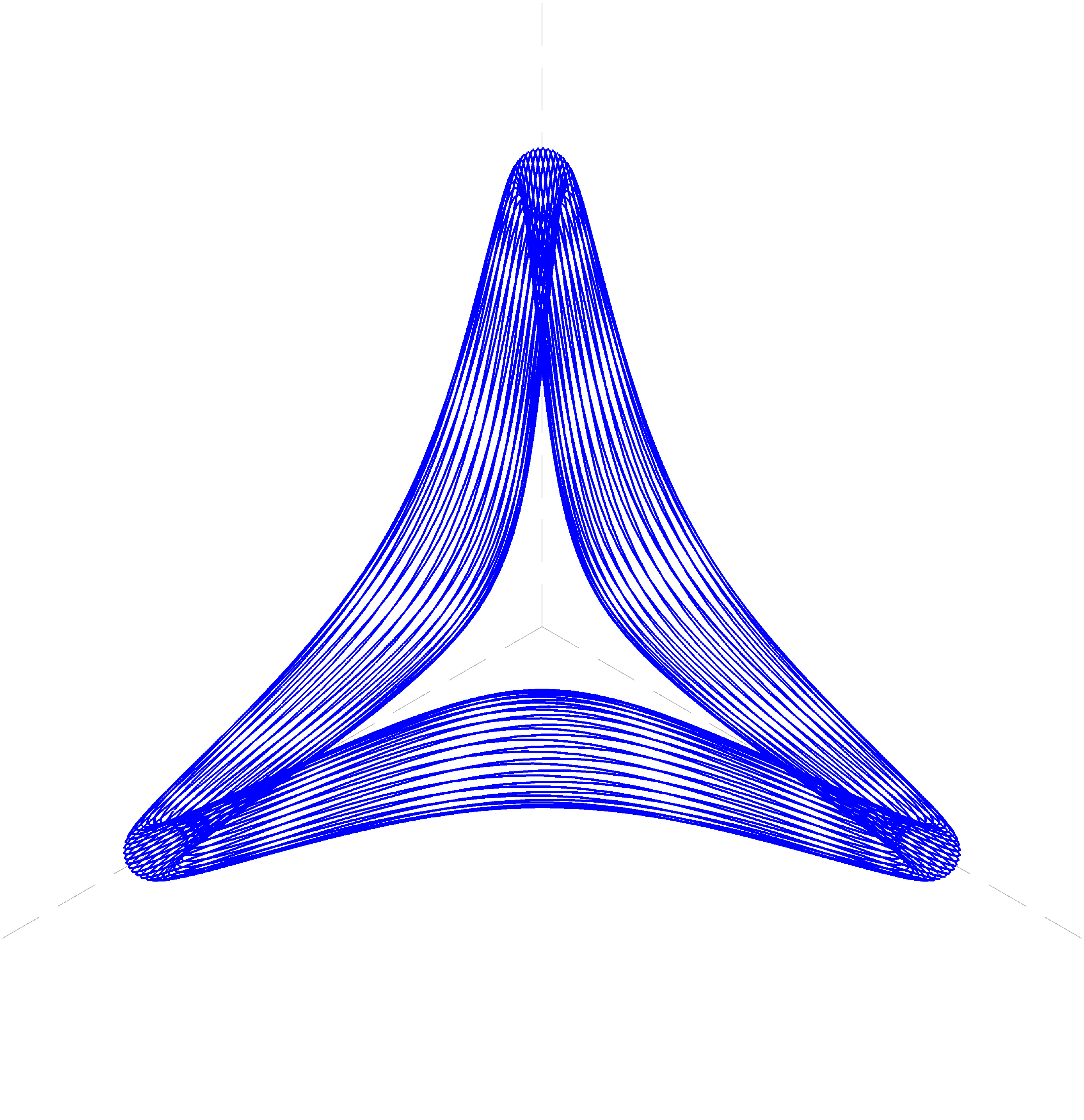}
\end{center}
\end{minipage}
\quad
\begin{minipage}[b]{0.42\linewidth}
\begin{center}    
\includegraphics[width=.85\textwidth]{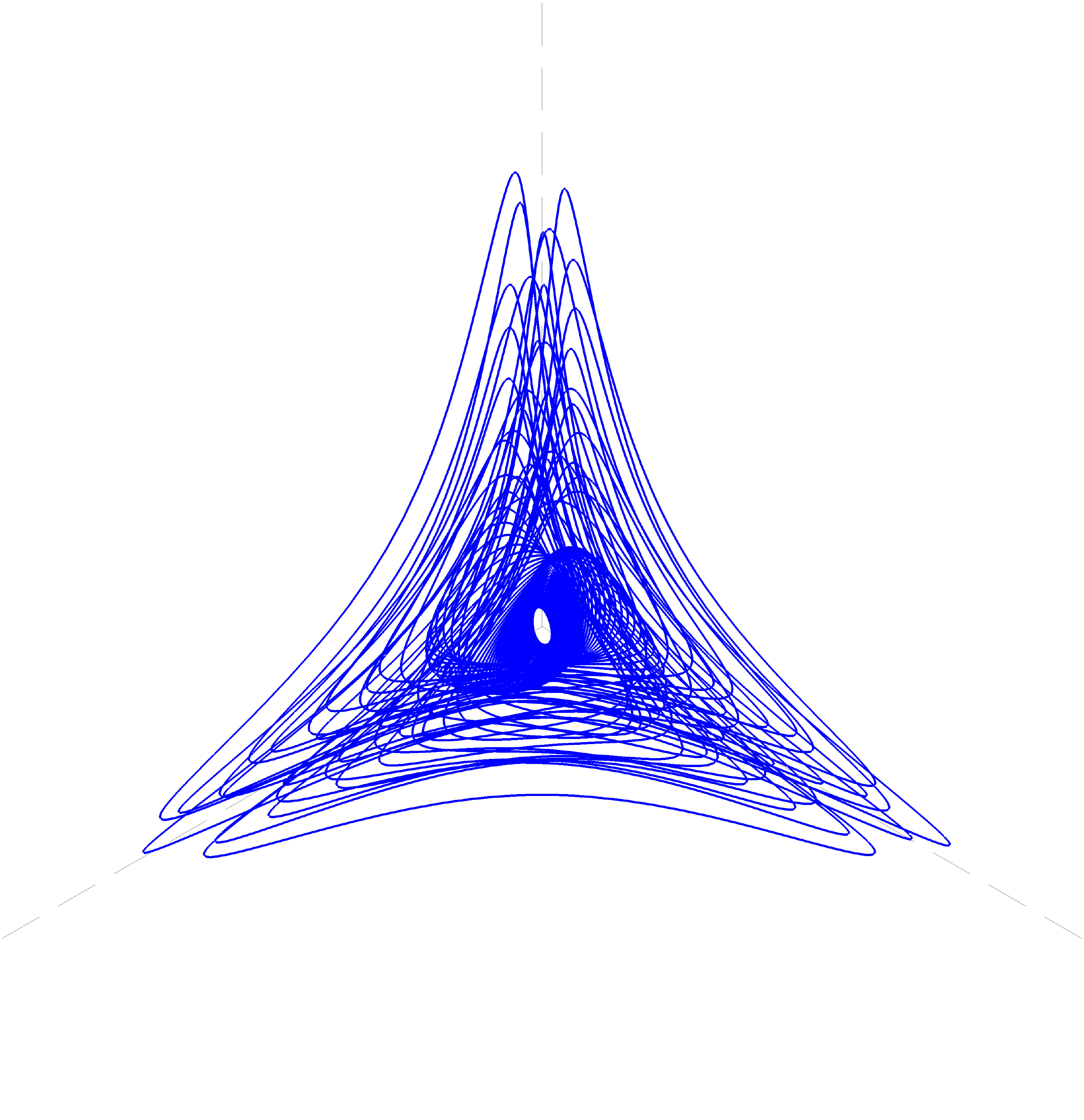}
\end{center}
\end{minipage}
\caption{Trajectories of a single player using gradient-descent-ascent dynamics for a hidden bilinear game $L(\bF(\btheta),\bG(\bphi))=\bF^\top(\btheta)A\bG(\bphi)$ where $A$ is the classical Rock-Paper-Scissors table and $\bF,\bG$ have the sigmoid activations. The two left figures present the Poincaré recurrence for different initializations of the dynamics, a behavior consistent with the Lyapunov stability of \Cref{th:conv-conc-transformed}. On the other hand, the two figures on the right illustrate convergent to the mixed Nash equilibrium executions which exploit the regularization tools as described in \Cref{sec:regularization}.  The regularization terms added are centered at the mixed equilbrium of the game, leading to convergence to the unmodified equilibrium of the Rock-Paper-Scissors game.
}
\end{figure}
\end{appendices}

\clearpage
\begin{flushleft}
\bibliography{iclr2021_conference}{}

\newcommand{\etalchar}[1]{$^{#1}$}
\begin{thebibliography}{GPAM{\etalchar{+}}14}

\bibitem[ACB17]{arjovsky2017wasserstein}
Mart{\'{\i}}n Arjovsky, Soumith Chintala, and L{\'{e}}on Bottou.
\newblock Wasserstein {GAN}.
\newblock {\em CoRR}, abs/1701.07875, 2017.

\bibitem[ADLH19]{adolphs2018local}
Leonard Adolphs, Hadi Daneshmand, Aur{\'{e}}lien Lucchi, and Thomas Hofmann.
\newblock Local saddle point optimization: {A} curvature exploitation approach.
\newblock In {\em The 22nd International Conference on Artificial Intelligence
  and Statistics, {AISTATS} 2019, 16-18 April 2019, Naha, Okinawa, Japan},
  pages 486--495, 2019.

\bibitem[ALLW18]{abernethy2018faster}
Jacob Abernethy, Kevin~A Lai, Kfir~Y Levy, and Jun-Kun Wang.
\newblock Faster rates for convex-concave games.
\newblock In {\em COLT}, 2018.

\bibitem[ALW19]{abernethy2019last}
Jacob Abernethy, Kevin~A Lai, and Andre Wibisono.
\newblock Last-iterate convergence rates for min-max optimization.
\newblock {\em arXiv preprint arXiv:1906.02027}, 2019.

\bibitem[BB03]{nontangency}
Sanjay~P. Bhat and Dennis~S. Bernstein.
\newblock Nontangency-based lyapunov tests for convergence and stability in
  systems having a continuum of equilibria.
\newblock {\em {SIAM} J. Control and Optimization}, 42(5):1745--1775, 2003.

\bibitem[BGP20]{bailey2020finite}
James~P Bailey, Gauthier Gidel, and Georgios Piliouras.
\newblock Finite regret and cycles with fixed step-size via alternating
  gradient descent-ascent.
\newblock In {\em COLT}, 2020.

\bibitem[BP18]{BaileyEC18}
James~P. Bailey and Georgios Piliouras.
\newblock Multiplicative weights update in zero-sum games.
\newblock In {\em Proceedings of the 2018 {ACM} Conference on Economics and
  Computation, Ithaca, NY, USA, June 18-22, 2018}, pages 321--338, 2018.

\bibitem[BP19]{bailey2019fast}
James Bailey and Georgios Piliouras.
\newblock Fast and furious learning in zero-sum games: vanishing regret with
  non-vanishing step sizes.
\newblock In {\em Advances in Neural Information Processing Systems}, pages
  12977--12987, 2019.

\bibitem[BRM{\etalchar{+}}18]{hgd2018}
David Balduzzi, S{\'{e}}bastien Racani{\`{e}}re, James Martens, Jakob~N.
  Foerster, Karl Tuyls, and Thore Graepel.
\newblock The mechanics of n-player differentiable games.
\newblock In Jennifer~G. Dy and Andreas Krause, editors, {\em Proceedings of
  the 35th International Conference on Machine Learning, {ICML} 2018,
  Stockholmsm{\"{a}}ssan, Stockholm, Sweden, July 10-15, 2018}, volume~80 of
  {\em Proceedings of Machine Learning Research}, pages 363--372. {PMLR}, 2018.

\bibitem[BV04]{boyd}
Stephen Boyd and Lieven Vandenberghe.
\newblock {\em Convex Optimization}.
\newblock Cambridge University Press, USA, 2004.

\bibitem[CBL06]{Cesa06}
Nikolo Cesa-Bianchi and Gabor Lugoisi.
\newblock {\em Prediction, Learning, and Games}.
\newblock Cambridge University Press, 2006.

\bibitem[CP19]{cheung2019vortices}
Yun~Kuen Cheung and Georgios Piliouras.
\newblock Vortices instead of equilibria in minmax optimization: Chaos and
  butterfly effects of online learning in zero-sum games.
\newblock In {\em COLT}, 2019.

\bibitem[CP20]{2020arXiv200513996K}
{Yun Kuen} {Cheung} and Georgios {Piliouras}.
\newblock {Chaos, Extremism and Optimism: Volume Analysis of Learning in
  Games}.
\newblock {\em arXiv e-prints}, page arXiv:2005.13996, May 2020.

\bibitem[DISZ18]{daskalakis2018training}
Constantinos Daskalakis, Andrew Ilyas, Vasilis Syrgkanis, and Haoyang Zeng.
\newblock Training {GAN}s with optimism.
\newblock In {\em ICLR}, 2018.

\bibitem[DP18]{daskalakis2018NIPS}
Constantinos Daskalakis and Ioannis Panageas.
\newblock The limit points of (optimistic) gradient descent in min-max
  optimization.
\newblock In {\em Advances in Neural Information Processing Systems 31: Annual
  Conference on Neural Information Processing Systems 2018, NeurIPS 2018, 3-8
  December 2018, Montr{\'{e}}al, Canada.}, pages 9256--9266, 2018.

\bibitem[DP19]{daskalakis2018last}
Constantinos Daskalakis and Ioannis Panageas.
\newblock Last-iterate convergence: Zero-sum games and constrained min-max
  optimization.
\newblock In {\em 10th Innovations in Theoretical Computer Science Conference,
  {ITCS} 2019, January 10-12, 2019, San Diego, California, {USA}}, pages
  27:1--27:18, 2019.

\bibitem[DSZ20]{daskalakis2020complexity}
Constantinos Daskalakis, Stratis Skoulakis, and Manolis Zampetakis.
\newblock The complexity of constrained min-max optimization, 2020.

\bibitem[Eva97]{evans1997partial}
Lawrence~C Evans.
\newblock Partial differential equations and monge-kantorovich mass transfer.
\newblock {\em Current developments in mathematics}, 1997(1):65--126, 1997.

\bibitem[Fan53]{fan1953minimax}
Ky~Fan.
\newblock Minimax theorems.
\newblock {\em Proceedings of the National Academy of Sciences of the United
  States of America}, 39(1):42, 1953.

\bibitem[FCR20]{fiez20implicit}
Tanner Fiez, Benjamin Chasnov, and Lillian Ratliff.
\newblock Implicit learning dynamics in stackelberg games: Equilibria
  characterization, convergence analysis, and empirical study.
\newblock In {\em ICML}, 2020.

\bibitem[FO20]{gansmaynothavenash}
Farzan Farnia and Asuman~E. Ozdaglar.
\newblock Gans may have no nash equilibria.
\newblock {\em CoRR}, abs/2002.09124, 2020.

\bibitem[GBV{\etalchar{+}}19]{gidel2019a}
Gauthier Gidel, Hugo Berard, Gaëtan Vignoud, Pascal Vincent, and Simon
  Lacoste-Julien.
\newblock A variational inequality perspective on generative adversarial
  networks.
\newblock In {\em ICLR}, 2019.

\bibitem[Goo17]{DBLP:journals/corr/Goodfellow17}
Ian~J. Goodfellow.
\newblock {NIPS} 2016 tutorial: Generative adversarial networks.
\newblock {\em CoRR}, abs/1701.00160, 2017.

\bibitem[GPAM{\etalchar{+}}14]{goodfellow2014generative}
Ian Goodfellow, Jean Pouget-Abadie, Mehdi Mirza, Bing Xu, David Warde-Farley,
  Sherjil Ozair, Aaron Courville, and Yoshua Bengio.
\newblock Generative adversarial nets.
\newblock In {\em Advances in neural information processing systems}, pages
  2672--2680, 2014.

\bibitem[GPM{\etalchar{+}}14]{gan}
Ian~J. Goodfellow, Jean Pouget{-}Abadie, Mehdi Mirza, Bing Xu, David
  Warde{-}Farley, Sherjil Ozair, Aaron~C. Courville, and Yoshua Bengio.
\newblock Generative adversarial nets.
\newblock In {\em Advances in Neural Information Processing Systems 27: Annual
  Conference on Neural Information Processing Systems 2014, December 8-13 2014,
  Montreal, Quebec, Canada}, pages 2672--2680, 2014.

\bibitem[HIMM19]{hsieh2019convergence}
Yu-Guan Hsieh, Franck Iutzeler, J{\'e}r{\^o}me Malick, and Panayotis
  Mertikopoulos.
\newblock On the convergence of single-call stochastic extra-gradient methods.
\newblock In {\em Advances in Neural Information Processing Systems}, pages
  6938--6948, 2019.

\bibitem[HIMM20]{hsieh2020explore}
Yu-Guan Hsieh, Franck Iutzeler, J{\'e}r{\^o}me Malick, and Panayotis
  Mertikopoulos.
\newblock Explore aggressively, update conservatively: Stochastic extragradient
  methods with variable stepsize scaling.
\newblock {\em arXiv preprint arXiv:2003.10162}, 2020.

\bibitem[JNJ19]{jin2019minmax}
Chi Jin, Praneeth Netrapalli, and Michael~I. Jordan.
\newblock Minmax optimization: Stable limit points of gradient descent ascent
  are locally optimal.
\newblock {\em CoRR}, abs/1902.00618, 2019.

\bibitem[Kha02]{khalil}
Hassan~K Khalil.
\newblock {\em {Nonlinear systems; 3rd ed.}}
\newblock Prentice-Hall, Upper Saddle River, NJ, 2002.

\bibitem[KLZ{\etalchar{+}}19]{KurachLZMG19}
Karol Kurach, Mario Lucic, Xiaohua Zhai, Marcin Michalski, and Sylvain Gelly.
\newblock A large-scale study on regularization and normalization in gans.
\newblock In Kamalika Chaudhuri and Ruslan Salakhutdinov, editors, {\em
  Proceedings of the 36th International Conference on Machine Learning, {ICML}
  2019, 9-15 June 2019, Long Beach, California, {USA}}, volume~97 of {\em
  Proceedings of Machine Learning Research}, pages 3581--3590. {PMLR}, 2019.

\bibitem[LLDD19]{lei2019sgd}
Qi~Lei, Jason~D Lee, Alexandros~G Dimakis, and Constantinos Daskalakis.
\newblock Sgd learns one-layer networks in wgans.
\newblock {\em arXiv preprint arXiv:1910.07030}, 2019.

\bibitem[MGN18]{MeschederGN18}
Lars~M. Mescheder, Andreas Geiger, and Sebastian Nowozin.
\newblock Which training methods for gans do actually converge?
\newblock In Jennifer~G. Dy and Andreas Krause, editors, {\em Proceedings of
  the 35th International Conference on Machine Learning, {ICML} 2018,
  Stockholmsm{\"{a}}ssan, Stockholm, Sweden, July 10-15, 2018}, volume~80 of
  {\em Proceedings of Machine Learning Research}, pages 3478--3487. {PMLR},
  2018.

\bibitem[MLZ{\etalchar{+}}19]{mertikopoulos2019optimistic}
Panayotis Mertikopoulos, Bruno Lecouat, Houssam Zenati, Chuan-Sheng Foo, Vijay
  Chandrasekhar, and Georgios Piliouras.
\newblock Optimistic mirror descent in saddle-point problems: Going the
  extra(-gradient) mile.
\newblock In {\em ICLR}, 2019.

\bibitem[MMS{\etalchar{+}}18]{madry2018towards}
Aleksander Madry, Aleksandar Makelov, Ludwig Schmidt, Dimitris Tsipras, and
  Adrian Vladu.
\newblock Towards deep learning models resistant to adversarial attacks.
\newblock In {\em ICLR}, 2018.

\bibitem[MOP20]{mokhtari2020unified}
Aryan Mokhtari, Asuman Ozdaglar, and Sarath Pattathil.
\newblock A unified analysis of extra-gradient and optimistic gradient methods
  for saddle point problems: Proximal point approach.
\newblock In {\em International Conference on Artificial Intelligence and
  Statistics}, pages 1497--1507. PMLR, 2020.

\bibitem[MPP18]{mertikopoulos2018cycles}
Panayotis Mertikopoulos, Christos~H. Papadimitriou, and Georgios Piliouras.
\newblock Cycles in adversarial regularized learning.
\newblock In {\em Proceedings of the Twenty-Ninth Annual {ACM-SIAM} Symposium
  on Discrete Algorithms, {SODA} 2018, New Orleans, LA, USA, January 7-10,
  2018}, pages 2703--2717, 2018.

\bibitem[MPT{\etalchar{+}}18]{hamilton1}
Chris~J. Maddison, Daniel Paulin, Yee~Whye Teh, Brendan O'Donoghue, and Arnaud
  Doucet.
\newblock Hamiltonian descent methods.
\newblock {\em CoRR}, abs/1809.05042, 2018.

\bibitem[MR19]{mazumdar2019local}
Eric Mazumdar and Lillian~J Ratliff.
\newblock Local nash equilibria are isolated, strict local nash equilibria in
  ‘almost all’ zero-sum continuous games.
\newblock In {\em 2019 IEEE 58th Conference on Decision and Control (CDC)},
  pages 6899--6904. IEEE, 2019.

\bibitem[NCT16]{fgan}
Sebastian Nowozin, Botond Cseke, and Ryota Tomioka.
\newblock f-gan: Training generative neural samplers using variational
  divergence minimization.
\newblock In Daniel~D. Lee, Masashi Sugiyama, Ulrike von Luxburg, Isabelle
  Guyon, and Roman Garnett, editors, {\em Advances in Neural Information
  Processing Systems 29: Annual Conference on Neural Information Processing
  Systems 2016, December 5-10, 2016, Barcelona, Spain}, pages 271--279, 2016.

\bibitem[Nes04]{Nesterov04}
Yurii~E. Nesterov.
\newblock {\em Introductory Lectures on Convex Optimization - {A} Basic
  Course}, volume~87 of {\em Applied Optimization}.
\newblock Springer, 2004.

\bibitem[OM19]{hamitlon3}
Brendan O'Donoghue and Chris~J. Maddison.
\newblock Hamiltonian descent for composite objectives.
\newblock In Hanna~M. Wallach, Hugo Larochelle, Alina Beygelzimer, Florence
  d'Alch{\'{e}}{-}Buc, Emily~B. Fox, and Roman Garnett, editors, {\em Advances
  in Neural Information Processing Systems 32: Annual Conference on Neural
  Information Processing Systems 2019, NeurIPS 2019, 8-14 December 2019,
  Vancouver, BC, Canada}, pages 14443--14453, 2019.

\bibitem[Per91]{Perko}
Lawrence Perko.
\newblock {\em Differential Equations and Dynamical Systems}.
\newblock Springer, 3nd. edition, 1991.

\bibitem[PML{\etalchar{+}}20]{imperfect-information}
Julien P{\'{e}}rolat, R{\'{e}}mi Munos, Jean{-}Baptiste Lespiau, Shayegan
  Omidshafiei, Mark Rowland, Pedro~A. Ortega, Neil Burch, Thomas~W. Anthony,
  David Balduzzi, Bart~De Vylder, Georgios Piliouras, Marc Lanctot, and Karl
  Tuyls.
\newblock From poincar{\'{e}} recurrence to convergence in imperfect
  information games: Finding equilibrium via regularization.
\newblock {\em CoRR}, abs/2002.08456, 2020.

\bibitem[RLNH17]{RothLNH17}
Kevin Roth, Aur{\'{e}}lien Lucchi, Sebastian Nowozin, and Thomas Hofmann.
\newblock Stabilizing training of generative adversarial networks through
  regularization.
\newblock In Isabelle Guyon, Ulrike von Luxburg, Samy Bengio, Hanna~M. Wallach,
  Rob Fergus, S.~V.~N. Vishwanathan, and Roman Garnett, editors, {\em Advances
  in Neural Information Processing Systems 30: Annual Conference on Neural
  Information Processing Systems 2017, 4-9 December 2017, Long Beach, CA,
  {USA}}, pages 2018--2028, 2017.

\bibitem[SBRL18]{SanjabiBRL18}
Maziar Sanjabi, Jimmy Ba, Meisam Razaviyayn, and Jason~D. Lee.
\newblock On the convergence and robustness of training gans with regularized
  optimal transport.
\newblock In Samy Bengio, Hanna~M. Wallach, Hugo Larochelle, Kristen Grauman,
  Nicol{\`{o}} Cesa{-}Bianchi, and Roman Garnett, editors, {\em Advances in
  Neural Information Processing Systems 31: Annual Conference on Neural
  Information Processing Systems 2018, NeurIPS 2018, 3-8 December 2018,
  Montr{\'{e}}al, Canada}, pages 7091--7101, 2018.

\bibitem[SSS{\etalchar{+}}17]{silver2017mastering}
David Silver, Julian Schrittwieser, Karen Simonyan, Ioannis Antonoglou, Aja
  Huang, Arthur Guez, Thomas Hubert, Lucas Baker, Matthew Lai, Adrian Bolton,
  et~al.
\newblock Mastering the game of go without human knowledge.
\newblock {\em nature}, 550(7676):354--359, 2017.

\bibitem[VGFP19]{vlatakis2019poincare}
Emmanouil-Vasileios Vlatakis-Gkaragkounis, Lampros Flokas, and Georgios
  Piliouras.
\newblock Poincar{\'e} recurrence, cycles and spurious equilibria in
  gradient-descent-ascent for non-convex non-concave zero-sum games.
\newblock In {\em Advances in Neural Information Processing Systems}, pages
  10450--10461, 2019.

\bibitem[Vil08]{villani2008optimal}
C{\'e}dric Villani.
\newblock {\em Optimal transport: old and new}, volume 338.
\newblock Springer Science \& Business Media, 2008.

\bibitem[WA18]{wang2018acceleration}
Jun-Kun Wang and Jacob~D Abernethy.
\newblock Acceleration through optimistic no-regret dynamics.
\newblock In {\em Advances in Neural Information Processing Systems}, pages
  3824--3834, 2018.

\bibitem[ZPY20]{2020arXiv200211875Z}
Guojun {Zhang}, Pascal {Poupart}, and Yaoliang {Yu}.
\newblock {Optimality and Stability in Non-Convex-Non-Concave Min-Max
  Optimization}.
\newblock {\em arXiv e-prints}, page arXiv:2002.11875, February 2020.

\bibitem[ZY19]{zhang2019convergence}
Guojun Zhang and Yaoliang Yu.
\newblock Convergence of gradient methods on bilinear zero-sum games.
\newblock In {\em International Conference on Learning Representations}, 2019.

\end{thebibliography}
\bibliographystyle{alpha}
\end{flushleft}
\end{document}